\newlength{\fixboxwidth}
\newcommand{\re}{\mathbb{R}}\newcommand{\N}{\mathbb{N}}
\newcommand{\zz}{\mathbb{Z}}\newcommand{\C}{\mathbb{C}}
\newcommand{\Z}{{\zz}^d}
\newcommand{\R}{{\re}^d}
\newcommand{\cs}{{\mathcal S}}
\newcommand{\cl}{{\mathcal L}}
\newcommand{\cf}{{\mathcal F}}
\newcommand{\cfi}{{\cf}^{-1}}
\newcommand{\supp}{{\rm supp \, }}
\newcommand{\mix}{{\rm mix}}
\newcommand{\gf}{\mathcal{F}}
\newcommand{\unif}{{\rm unif}}
\newcommand{\bproof}{\begin{proof}}
\newcommand{\eproof}{\end{proof}}
\newcommand{\be}{\begin{equation}}
\newcommand{\ee}{\end{equation}}
\newcommand{\beq}{\begin{eqnarray}}
\newcommand{\beqq}{\begin{eqnarray*}}
\newcommand{\eeq}{\end{eqnarray}}
\newcommand{\eeqq}{\end{eqnarray*}}
\numberwithin{equation}{section}
\newtheorem{theorem}{Theorem}[section]
\newtheorem{definition}[theorem]{Definition}
\newtheorem{corollary}[theorem]{Corollary}
\newtheorem{lemma}[theorem]{Lemma}
\newtheorem{proposition}[theorem]{Proposition}
\newtheorem{remark}[theorem]{Remark}
\begin{document}

\title{Pointwise Multipliers for Besov Spaces of Dominating Mixed Smoothness - II}

\author[a,b]{Van Kien Nguyen\thanks{E-mail: kien.nguyen@uni-jena.de,\ kiennv@utc.edu.vn}}
\author[a]{Winfried Sickel\thanks{E-mail: winfried.sickel@uni-jena.de}}
\affil[a]{Friedrich-Schiller-University Jena, Ernst-Abbe-Platz 2, 07737 Jena, Germany}
\affil[b]{University of Transport and Communications, Dong Da, Hanoi, Vietnam}


\date{\today}

\maketitle
\begin{abstract}
We continue our investigations on pointwise multipliers for Besov spaces of dominating mixed smoothness. 
This time we study the algebra property of the classes  $S^r_{p,q}B(\R)$ with respect to pointwise  multiplication. 
In addition if $p\leq q$,  we are able to describe  the space of all pointwise multipliers for  $S^r_{p,q}B(\R)$. 
\end{abstract}

{\em Key words:} Pointwise multipliers; algebras with respect to pointwise multiplication, 
Besov spaces of dominating mixed smoothness; characterization by differences; localization property. 


\section{Introduction}


The regularity concept related to  Besov spaces of dominating mixed smoothness 
are standard in {\em Approximation Theory} \cite{T93b}, {\em Numerical Analysis} \cite{BG}, \cite{SST08}  and 
{\em Information-Based Complexity} \cite{NoWo08}, \cite{NoWo10}, \cite{NoWo12}.
However, there is also some interest in {\em Learning Theory} in those classes, at least in $S^r_{2,2} B(\R)$, $r >0$, see \cite{sc}, \cite{lki}. 

Assertions on pointwise multipliers belong to the key problems in the modern theory of function spaces.
In our previous paper \cite{KS16} we investigated 
the set of all pointwise multipliers $M(S^r_{p,p}B(\R))$ for the classes $S^r_{p,p}B(\R)$. 
It turned out that under the natural restrictions $1\le p,q \le \infty$ and $r>1/p$ this set is given by $S^r_{p,p}B(\R)_\unif$.
This assertion, formally, is completely parallel to the isotropic case where we have 
$M(B^r_{p,p}(\R)) = B^r_{p,p}(\R)_\unif$ ($1\le p,q\le \infty$, $r>d/p$).
However, in reality the proof of the result in the dominating mixed case is much more involved 
than in the isotropic case.
In the present paper our aim consists in an extension of the above characterization to the situation $p \le q \le \infty$.
In \cite{SS}, \cite{KS16b} we have shown for the isotropic case the characterization 
$M(B^r_{p,q}(\R)) = B^r_{p,q}(\R)_\unif$ ($1\le p \le q\le \infty$, $r>d/p$).
It turns out that this extension  has a counterpart in the dominating mixed case as well; we shall prove below
\be\label{00}
M(S^r_{p,q}B(\R)) = S^r_{p,q}B(\R)_\unif \qquad \mbox{if}\quad  1\le p \le q\le \infty \quad \mbox{and}\quad r>1/p \, .
\ee
The extension from the isotropic case to the dominating mixed case is by no means straightforward.
To our own surprise the dominating mixed case is much more sophisticated.
The standard method in the isotropic situation, paramultiplication, seems to be not appropriate.
We shall deal with the characterization by differences of the underlying spaces, sometimes mixed with the Fourier analytic description. 

Let us mention that the restrictions in \eqref{00} are natural.
In cases either $q < p$ or $r <1/p $ the isotropic counterpart of the  identity in \eqref{00} is not longer true.
We refer to \cite{SS} and \cite{KS16b}. 
\\
The paper is organized as follows.
In Section \ref{def} we collect what we need about the classes $S^r_{p,q}B(\R)$ including some tools from Fourier analysis 
and few basic inequalities for differences. 
The next Section \ref{main} is devoted to the mutliplier problem.
First we shall describe there some basics about pointwise multipliers. After that 
we list our main results. Finally, in Section \ref{proofs},  we collect all proofs.


\subsection*{Notation}


As usual $\N$ denotes the natural numbers, $\N_0 :=\N\cup\{0\}$, 
$\zz$ denotes the integers, 
$\re$ the real numbers, 
and $\C$ the complex numbers. The letter $d\in \N, \ d>1,$ is always reserved for the underlying dimension in $\R, \Z$ etc. 
By $[d]$ we mean the set
$[d]:=\{1, \ldots ,d\}$. If $k=(k_1,  \ldots  , k_d)\in \N_0^d$, then we put
\beqq
|k|_1 := k_1 + \ldots \, + k_d\, \qquad\text{and}\qquad |k|_\infty:= \max_{j=1, \ldots ,\, d} \, k_j \,.
\eeqq
Further,
by $\langle x,y\rangle$ or $x\cdot y$  we mean the usual Euclidean inner product in $\R$. Let 
\beqq
x\diamond y := (x_1y_1, \ldots  , x_dy_d)\in \R\,.
\eeqq

If $X$ and $Y$ are two normed spaces, the norm
of an element $x$ in $X$ will be denoted by $\|x\,|\,X\|$. 
The symbol $X \hookrightarrow Y$ indicates that the
identity operator is continuous. For two sequences $a_n$ and $b_n$ we will write $a_n \lesssim b_n$ if there exists a
constant $c>0$ such that $a_n \leq c\,b_n$ for all $n$. We will write $a_n \asymp b_n$ if $a_n \lesssim b_n$ and $b_n
\lesssim a_n$. 

Let $\cs(\R)$ be the Schwartz space of all complex-valued rapidly decreasing infinitely differentiable  functions on $\R$. 
The topological dual, the class of tempered distributions, is denoted by $\cs'(\R)$ (equipped with the weak topology).
The Fourier transform on $\cs(\R)$ is given by 
\[
\cf \varphi (\xi) = (2\pi)^{-d/2} \int_{\R} \, e^{-ix \xi}\, \varphi (x)\, dx \, , \qquad \xi \in \R\, .
\]
The inverse transformation is denoted by $\cfi $.
We use both notations also for the transformations defined on $\cs'(\R)$\,.


\section{Besov spaces of dominating mixed smoothness}\label{def}


The history of Besov spaces has started in 1951 with a paper by Nikol'skij \cite{Ni51}.
Nikol'skij had investigated the spaces $B^s_{p,\infty}(\R)$ there. Later, his Ph.D-studies Besov \cite{Bes1}, \cite{Bes2}
introduced the classes $B^s_{p,q}(\R)$, $1\le p,q \le \infty$, $s>0$. 
The dominating mixed counterparts $S^r_{p,q}B(\R)$, $1\le p,q \le \infty$, $r>0$, have been introduced by 
Nikol'skij \cite{Ni63} ($q=\infty$), Amanov \cite{Am1} and Dzabrailov \cite{Dz1}, \cite{Dz2}.
The main new feature of these classes consists in the cross-norm property, see Remark \ref{blabla} below.
Besov spaces of dominating mixed smoothness  represent a quite different way to extend Besov spaces from $\re$ to $\R$, $d>1$.


\subsection{The definition and some basic properties}


We introduce the spaces by using the Fourier analytic approach.
Let $\varphi_0 \in C_0^{\infty}({\re})$ be a non-negative function such that 
 $\varphi_0  \equiv  1$ on $[-1,1]$ and $\supp\varphi_0 \subset [-\frac{3}{2},\frac{3}{2}]$. 
For $j\in \N$ we define
\beqq
         \varphi_j(\xi) = \varphi_0(2^{-j}\xi)-\varphi_0(2^{-j+1}\xi) ,\qquad\ \xi \in \re\, , 
\eeqq
and  
\be\label{ws-101}
\varphi_{k}(x) := \varphi_{k_1}(x_1)\cdot \ldots \cdot
         \varphi_{k_d}(x_d)\, , \qquad  x \in \R, \quad k\in \N_0^d\,. 
\ee        
This implies 
\[
\sum_{k\in \N_0^d} \varphi_k(x) =  1 \qquad  \text{for all}\ x\in \R\, , 
\]
and
\[
\supp \varphi_k  \subset  \Big\{ x \in \R: 2^{k_\ell-1}\le |x_\ell| \le 3 \, 2^{k_\ell-1}\, , \quad \ell =1, \ldots \, , d\Big\}\,, \qquad k \in \N^d\, .
\]
With other words, $(\varphi_k )_{k \in \N_0^d}$ is a smooth dyadic decomposition of unity of tensor product type.

\begin{definition}
Let $(\varphi_k)_{k\in \N_0^d}$ be the above system. Let  $1\le  p,q\leq \infty$ and $r \in \re$.  Then  $ S^{r}_{p,q}B(\re^d)$ is the
         collection of all tempered distributions $f \in \mathcal{S}'(\R)$
         such that
\beqq
          \|\, f \, |S^r_{p,q}B(\R)\|_{\varphi} :=
         \bigg(\sum\limits_{k\in \N_0^d} 2^{r|k|_1  q}\, \|\, \cfi[\varphi_{k} \cf f]
         |L_p(\re^d)\|^q\bigg)^{1/q} <\infty
\eeqq 
with the ususal modifications if $q=\infty$.
\end{definition}

Of course,  $S^r_{p,q}B(\R)$ are Banach spaces and they are independent from the chosen generator $\varphi_0$ of the smooth dyadic decomposition of unity 
$(\varphi_k)_{k \in \N_0^d}$ in the sense of equivalent norms.
For those basic facts we refer to the monographs \cite{Am} and \cite{ST}.

\begin{remark}\label{blabla} \rm 
(i) If $d=1$ we get $ S^{r}_{p,q} B(\re) = B^r_{p,q}(\re)$.  
\\
(ii) One of the most remarkable properties of Besov spaces of dominating mixed smoothness consists in the following. If $f_i \in B^r_{p,q}(\re)$, $ i=1,  \ldots  , d$,
 then its tensor product 
\[
 f(x) :=  (f_1 \otimes f_2 \otimes \, \ldots \, \otimes f_d ) (x) = \prod_{i=1}^d f_i (x_i)\, , \qquad x = (x_1, \, \ldots \, , x_d) \in \R\, ,
 \]
belongs to $S^r_{p,q}B(\R)$ and  
\[
 \| \, f \, | S^r_{p,q}B(\R)\| = \prod_{i=1}^d \|\, f_i \, |B^r_{p,q} (\re)\| \, .
\]
With other words, Besov spaces of dominating mixed smoothness have a cross-norm.
\end{remark}


\subsection{Besov spaces of dominating mixed smoothness and differences}


First  we recall the definition of (isotropic) Besov spaces. For a multivariate function $f:\R\to \C$,  $m  \in \N$, $h  \in \R$ and $x \in \R$ we put
 \[
 \Delta_{h}^{m} f(x):= \sum_{\ell =0}^{m} (-1)^{m -\ell} \, \binom{m}{\ell} \, f(x + \ell h )
 \]
and
\[
 \omega_m (f,t)_p := \sup_{|h|<t} \|\, \Delta_{h}^{m} f\, |L_p (\R)\|\, , \qquad t>0\, .
\]
Let $1 \le p,q\leq \infty$,  $r > 0$ and $m\in\N$ such that $m-1 \le r <m$. Then the (isotropic) Besov space 
$B^r_{p,q}(\R)$ is a collection of all $f\in L_p(\R)$ such that 
\[
\|f|B^r_{p,q}(\R)\|:= \|\, f\, |L_p(\R)\| + \bigg( \sum_{j=0}^{\infty} \, \big(2^{jr}\,  \omega_m (f, 2^{-j})_p\big)^q\bigg)^{1/q} < \infty. 
\]
We refer to the monographs \cite{Ni75} and \cite{Tr83}.

Now we turn to Besov spaces of dominating mixed smoothness. 
Let $j \in [d]= \{1,2, \ldots  , d\}$, $m \in \N$, $h \in \re$ and $x \in \R$. We put
\beqq
 \Delta_{h,j}^{m} f(x):= \sum_{\ell =0}^{m} (-1)^{m-\ell} \, \binom{m}{\ell} \, 
f(x_1, \ldots  , x_{j-1}, x_j + \ell h, x_{j+1}, \ldots  , x_d)\, .
\eeqq
This is the $m$-th order difference of $f$ in direction $j$.  
For $e\subset [d]$, $ h  \in \R$ and $m \in \N_0^d$ the mixed $(m,e)$-th difference operator $\Delta_h^{m,e}$ is defined to be 
 \[
 \Delta_{ h}^{m,e} := \prod_{i \in e} \Delta_{h_i,i}^{m_i} \qquad\mbox{and}\qquad \Delta_h^{m,\emptyset} :=  \operatorname{Id} \,, 
 \]
where $\operatorname{Id}f = f$. An associated modulus of smoothness is given by
\beqq
 \omega_{m}^e(f,t)_p:= \sup_{|h_i| < t_i, i \in  e}\|\, \Delta_h^{m,e}f \, |L_p(\R)\| \ ,\qquad t \in [0,1]^d\,,
\eeqq
where  $f \in L_p(\R)$ (in particular, $\omega_{m}^{\emptyset}(f,t)_p = \|f|L_p(\R)\| $). 
Many times,   e.g., in the Proposition below, we do not need to choose  $m$ as a vector.
For this reason, if $m\in \N$ we put 
$
\bar{m}:= (m, \ldots  , m)  \in \N_0^d 
$
and therefore
\[
\Delta_{ h}^{\bar{m},e} := \prod_{i \in e} \Delta_{h_i,i}^{m} \, .
\]
For a set $e\subset [d]$ we denote $e_0 :=[d] \backslash e$ and 
$$\N_0^d(e):=\big\{k\in \N_0^d: \ k_i=0\ \text{if}\ i\not \in e\big\}.$$
Let $k \in \N_0^d$.
For brevity we write $2^{-k} $ instead of the vector $(2^{-k_1}, 2^{-k_2}, \ldots  , 2^{-k_d})$.

\begin{proposition}\label{diff} 
Let $1 \le p,q\leq \infty$,  $r>0$ and $m\in \N$ such that $m-1 \le r < m$. 
Then the Besov space of dominating mixed smoothness $S^r_{p,q}B(\R)$
is the collection of all $f\in L_p(\R)$ such that
$$
    \|\, f \, | S^r_{p,q}B(\R)\|_{(m)} := 
\sum_{e\subset [d]}\bigg(\sum\limits_{k\in \N_0^d(e)} 2^{r|k|_1 q}\omega_{\bar{m}}^{e }(f,2^{-k})_p^{q}\bigg)^{1/q}\, 
$$
is finite (with the usual modification if $q=\infty$). 
Furthermore,  $\|\, \cdot \, | S^r_{p,q}B(\R)\|_{(m)}$ generates a norm equivalent to $\|\,\cdot \, | S^r_{p,q}B(\R)\|_{\varphi}$ on $L_p (\R)$.
\end{proposition}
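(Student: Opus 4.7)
The plan is to prove the two inequalities $\|f\|_{(m)} \lesssim \|f\|_\varphi$ (direction $A$) and $\|f\|_\varphi \lesssim \|f\|_{(m)}$ (direction $B$), exploiting that the Littlewood--Paley system $\varphi_k = \varphi_{k_1}\otimes \cdots \otimes \varphi_{k_d}$ is of tensor product type. In effect, the argument iterates the classical one-dimensional equivalence for $B^r_{p,q}(\re)$ one coordinate at a time. The central analytic tool in both directions is the following anisotropic Nikol'skij--Bernstein inequality: if $g \in L_p(\R)$ has Fourier transform supported in $\prod_{j=1}^d \{|\xi_j|\le 2^{k_j+1}\}$, then for every $i \in [d]$ and $h_i \in \re$,
\[
\|\,\Delta_{h_i,i}^m g\,|L_p(\R)\| \,\lesssim\, \min\big(1,\,(|h_i|\,2^{k_i})^m\big)\,\|\,g\,|L_p(\R)\|\,,
\]
obtained by combining the trivial bound $\|\Delta_{h_i,i}^m g\|_p \le 2^m\|g\|_p$ with an $m$-fold Taylor expansion together with Bernstein's inequality applied in direction $i$.

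For direction $(A)$, introduce anisotropic blocks $\tilde f_{\ell_e} := \cfi[\prod_{i\in e}\varphi_{\ell_i}(\xi_i)\,\cf f]$, which have Fourier support in $\prod_{i\in e}\{|\xi_i|\le 2^{\ell_i+1}\}$ and are unrestricted in the remaining directions. Since $f = \sum_{\ell_e} \tilde f_{\ell_e}$, iterating the above inequality over $i \in e$ gives
\[
\omega_{\bar m}^e(\tilde f_{\ell_e},\,2^{-k})_p \,\lesssim\, \prod_{i\in e}\min\big(1,\,2^{(\ell_i-k_i)m}\big)\,\|\,\tilde f_{\ell_e}\,|L_p(\R)\|\,,
\]
whence, taking the supremum in $h$ first and summing in $\ell_e$,
\[
\omega_{\bar m}^e(f,\,2^{-k})_p \,\lesssim\, \sum_{\ell_e\in\N_0^{|e|}}\prod_{i\in e}\min\big(1,\,2^{(\ell_i-k_i)m}\big)\,\|\,\tilde f_{\ell_e}\,|L_p(\R)\|\,.
\]
Taking the $\ell^q$-norm in $k_e$ with weight $2^{r|k_e|_1}$ and applying $|e|$ one-dimensional Hardy inequalities one after another (legitimate because $m>r$) bounds the result by $\|\{2^{r|\ell_e|_1}\|\tilde f_{\ell_e}\|_p\}\|_{\ell^q(\ell_e)}$. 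A further application of Littlewood--Paley control in the remaining coordinates $j\notin e$ dominates this by $\|f\|_\varphi$, closing $(A)$.

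Direction $(B)$ is the main technical step: one realizes each Littlewood--Paley block as a weighted average of mixed differences of $f$ itself. Pick an auxiliary $\psi_0 \in C_0^\infty(\re)$ equal to $1$ on $\supp\varphi_0$, form the tensor product $\psi_k$ in the same way as in \eqref{ws-101}, so that $f_k = \cfi[\psi_k]\ast f$. For every coordinate $i$ with $k_i \ge 1$, the factor $\psi_{k_i}(\xi_i)$ vanishes in a neighborhood of $\xi_i=0$, and a classical one-variable identity (cf.~\cite{ST,Am}) supplies a weight $\eta_m\in L_1(\re)$ of rapid decay such that
\[
\cfi\big[\psi_{k_i}(\xi_i)\,\cf h\,\big](x) \,=\, \int_\re \eta_m(2^{k_i}t)\,2^{k_i}\,\Delta_{t,i}^m h(x)\,dt\,.
\]
Iterating this in every direction $i$ with $k_i\ge 1$ and invoking Minkowski's integral inequality in $L_p$ produces
\[
\|\,f_k\,|L_p(\R)\| \,\lesssim\, \omega_{\bar m}^{e(k)}(f,\,2^{-k})_p\,, \qquad e(k) := \{i\in[d]:\,k_i\ge 1\}\,,
\]
and regrouping $\sum_{k\in\N_0^d}$ according to the value of $e(k)\subset[d]$ reproduces precisely the sum over $e$ in the definition of $\|\cdot\|_{(m)}$, completing $(B)$.

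The main obstacle is the reconstruction identity underlying $(B)$. Since the Fourier symbol of $\Delta_{t,i}^m$ is $(e^{it\xi_i}-1)^m$, having a zero of order exactly $m$ at $\xi_i=0$, constructing $\eta_m$ amounts to solving the one-variable equation $\cf\eta_m(\xi)\cdot(e^{i\xi}-1)^m = \psi_0(\xi)-\psi_0(2\xi)$ on a neighborhood of the dyadic shell; this is feasible because the right-hand side vanishes at the origin to infinite order, so the quotient extends to a smooth compactly supported function whose inverse Fourier transform is Schwartz. A secondary, purely combinatorial subtlety is that in directions where $k_i=0$ no Bernstein/vanishing-moment trick is available; this is exactly why the equivalent norm $\|\cdot\|_{(m)}$ must be written as a sum over $e\subset[d]$ rather than as a single sum over $k\in\N_0^d$.
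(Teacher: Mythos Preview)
The paper does not prove this proposition; immediately after Lemma~\ref{diff1} it simply refers the reader to \cite[2.3.4]{ST} (for $d=2$) and \cite{U1}. Your outline matches the standard argument found in those references: direction $(A)$ via Bernstein-type bounds on band-limited functions together with iterated one-dimensional Hardy inequalities (using $m>r>0$), and direction $(B)$ via a Fourier-side factorization through the symbol $(e^{i\xi}-1)^m$ of the $m$-th difference.

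One cosmetic point in direction $(B)$: as written, $\cfi[\psi_k]*f$ is not the Littlewood--Paley block $\cfi[\varphi_k\cf f]$ appearing in $\|\cdot\|_\varphi$, so you should either work directly with $\varphi_{k_i}$ (which for $k_i\ge 1$ already vanishes identically near the origin, making the auxiliary $\psi$ superfluous) or first insert the estimate $\|\cfi[\varphi_k\cf f]\|_p\le\|\cfi\varphi_k\|_{L_1}\,\|\cfi[\psi_k]*f\|_p$ via Young's inequality before invoking your reconstruction identity. Also, after Minkowski in $(B)$ you obtain an integral over all $t\in\re$, not only $|t|\le 2^{-k_i}$; passing to $\omega_{\bar m}^{e(k)}(f,2^{-k})_p$ requires the standard growth bound $\omega_m(\cdot,\lambda s)_p\lesssim(1+\lambda)^m\omega_m(\cdot,s)_p$ together with the rapid decay of $\eta_m$, which you have but did not make explicit.
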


This can be generalized as follows.

\begin{lemma}\label{diff1}
Let $1 \le p,q\leq \infty$ and $r>0$. 
Let $m \in \N_0^d$ such that $r < m_i$ for all $i\in [d]$.
Then
$$
   \|\, f \, | S^r_{p,q}B(\R)\|_{(m)} := 
\sum_{e\subset [d]}\bigg(\sum\limits_{k\in \N_0^d(e)} 2^{r|k|_1 q}\omega_{{m}}^{e }(f,2^{-k})_p^{q}\bigg)^{1/q}\, 
$$
is an equivalent norm on the space $S^r_{p,q} B(\R)$. 
\end{lemma}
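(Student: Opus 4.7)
The plan is to reduce Lemma \ref{diff1} to Proposition \ref{diff}. Let $m_0\in\N$ be the unique integer with $m_0-1\le r<m_0$. Since each $m_i$ is an integer strictly exceeding $r$ and $m_0$ is the smallest such integer, we have $m_i\ge m_0$ for every $i\in[d]$. So the two norms $\|\,\cdot\,|S^r_{p,q}B(\R)\|_{(m)}$ and $\|\,\cdot\,|S^r_{p,q}B(\R)\|_{(\bar{m_0})}$ involve moduli of the same maximal smoothness $m_0$ but with (possibly) higher order $m_i$ in the general case; by Proposition \ref{diff}, once the two are shown to be equivalent on $L_p(\R)$, we are done.

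For the easy direction $\|f\,|S^r_{p,q}B(\R)\|_{(m)}\lesssim\|f\,|S^r_{p,q}B(\R)\|_{(\bar{m_0})}$, I would use the factorization $\Delta^{m_i}_{h_i,i}=\Delta^{m_i-m_0}_{h_i,i}\circ\Delta^{m_0}_{h_i,i}$. Since $\Delta^{m_i-m_0}_{h_i,i}$ is a linear combination of shifts with $\ell^1$-bounded coefficients, its $L_p$-operator norm is bounded uniformly in $h_i$. Iterating over $i\in e$ yields $\omega^e_m(f,2^{-k})_p\lesssim\omega^e_{\bar{m_0}}(f,2^{-k})_p$ pointwise in $k$, and taking $\ell^q$-norms and summing over $e$ gives the inequality.

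The harder direction, $\|f\,|S^r_{p,q}B(\R)\|_{(\bar{m_0})}\lesssim\|f\,|S^r_{p,q}B(\R)\|_{(m)}$, is the main obstacle. Here I would apply the one-dimensional Marchaud inequality iteratively, direction by direction through the set $e$. Fixing $e=\{i_1,\ldots,i_\ell\}\subset[d]$, I treat the product $g(\cdot)=\prod_{j>1}\Delta^{m_0}_{h_{i_j},i_j}f(\cdot)$ as a function in the single variable $x_{i_1}$ with remaining coordinates as parameters, and apply Marchaud in direction $i_1$:
\[
\omega_{m_0,i_1}(g,2^{-k_{i_1}})_p \;\lesssim\; 2^{-k_{i_1}m_0}\sum_{s_1=0}^{k_{i_1}}2^{s_1 m_0}\,\omega_{m_{i_1},i_1}(g,2^{-s_1})_p \;+\;2^{-k_{i_1}m_0}\|g\,|L_p(\R)\|.
\]
Taking suprema over the remaining $h_{i_j}$ replaces the first term by a mixed modulus of type $\omega^e_{m'}(f,\cdot)_p$ where the $i_1$-entry of $m'$ has been raised to $m_{i_1}$, while the second (boundary) term involves $\omega^{e\setminus\{i_1\}}_{\bar{m_0}}(f,\cdot)_p$, which is precisely a contribution to the $e\setminus\{i_1\}$ summand of the target norm. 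Repeating the argument in directions $i_2,\ldots,i_\ell$ converts all remaining $m_0$'s in $e$ into the corresponding $m_{i_j}$'s, producing boundary terms for all $e'\subsetneq e$.

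The final step is to absorb the resulting $\ell$-fold discrete convolutions into the $\ell^q$-norm. After multiplying by $2^{r|k|_1}$, each direction contributes a factor $2^{(k_{i_j}-s_j)(r-m_0)}$; since $r-m_0<0$, this is an $\ell^1$-kernel in $k_{i_j}-s_j\ge 0$, so Young's inequality in $\ell^q(\N_0)$, applied successively in each coordinate, bounds the $\ell^q$-norm of the convolutions by the $\ell^q$-norm of $\{2^{r|k|_1}\omega^e_m(f,2^{-k})_p\}_k$. The boundary terms from iterated Marchaud distribute across the subsets $e'\subset e$ in the sum defining $\|\,\cdot\,|S^r_{p,q}B(\R)\|_{(m)}$, and summing over $e$ closes the estimate. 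The technical subtlety, and the place where one must be careful, is precisely the bookkeeping of these boundary terms and the verification that a one-dimensional Marchaud inequality remains valid with fixed parameters from the other variables—this is standard but a bit tedious because the ``lower-order'' piece $\|g\,|L_p\|$ must be recognized as a modulus for a strictly smaller subset of directions.
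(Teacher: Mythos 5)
The paper does not actually prove Lemma \ref{diff1}: immediately after the statement it defers to \cite[2.3.4]{ST} (for $d=2$) and to \cite{U1}, so there is no internal argument to compare yours against. Your reduction is, however, essentially the standard one found in those references, and I believe it is correct. The direction $\|f\|_{(m)}\lesssim\|f\|_{(\bar m_0)}$ via $\Delta^{m_i}_{h_i,i}=\Delta^{m_i-m_0}_{h_i,i}\circ\Delta^{m_0}_{h_i,i}$ and the uniform $L_p$-boundedness of the extra difference is fine, and the converse via the discrete directional Marchaud inequality, applied coordinate-wise with the other variables frozen, followed by Young's inequality in $\ell^q(\N_0)$ with the kernel $2^{(k_i-s_i)(r-m_0)}$ (summable since $m_0>r$), closes the estimate. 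Two small points of care, both of which you essentially acknowledge: first, the boundary term $2^{-k_{i_1}m_0}\|g\|_p$ produces $\omega^{e\setminus\{i_1\}}_{\bar m_0}(f,\cdot)_p$, i.e.\ a modulus that still carries order $m_0$ on a smaller set of directions, so it is not yet a summand of $\|f\|_{(m)}$; one should organize the argument as an induction on $|e|$ (or continue the Marchaud iteration on every boundary term) so that in the end only moduli $\omega^{e'}_{m}$ with the target orders appear, the case $e'=\emptyset$ giving $\|f\|_p$. Second, the Marchaud step should only be invoked for directions with $m_{i_j}>m_0$ (for $m_{i_j}=m_0$ nothing is needed), and its directional form on $L_p(\R)$ with frozen parameters follows from the purely algebraic identity expressing $\Delta^{k}_{h,i}$ through $\Delta^{k}_{2h,i}$ and translates of $\Delta^{k+1}_{h,i}$, which commutes with the difference operators in the remaining directions. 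With these bookkeeping remarks the proposal is a complete and self-contained proof, which is arguably more than the paper itself offers for this lemma.
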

For a proof  of both assertions we refer to \cite[2.3.4]{ST} ($d=2$) and \cite{U1}.
Sometime it is helpful to use the following characterization.
\begin{lemma}\label{red}
Let $ 1\le p,q \le \infty$ and $r>0$. Let $m \in \N$  such that $m>r$.
Then the Besov space of dominating mixed smoothness $S^r_{p,q}B(\R)$
is the collection of all $f\in L_p(\R)$ such that
\[
 \|\, f \, | S^r_{p,q}B(\R)\|_{(m)}^* := 
 \sum_{e\subset [d]} \Bigg\{\int\limits_{[-1,1]^{|e|}} \prod_{i \in e} |h_i|^{-rq} \big\|  \Delta_{  h}^{\bar{m},e} f(\cdot) \big|
L_p (\R)\big\|^q \prod_{i \in e} \frac{dh_i}{|h_i|}\Bigg\}^{1/q} 
 \]
is finite (with the usual modification if $q=\infty$). 
Furthermore,  $\|\, \cdot \, | S^r_{p,q}B(\R)\|_{(m)}^*$ generates a norm equivalent to $\|\,\cdot \, | S^r_{p,q}B(\R)\|_{\varphi} $ on $L_p (\R)$.
\end{lemma}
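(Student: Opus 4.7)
By Lemma \ref{diff1} applied with the multi-index $\bar m = (m,\ldots,m) \in \N_0^d$ (admissible since $m > r$),
\[
\mathcal{N}(f) := \sum_{e \subset [d]}\bigg(\sum_{k \in \N_0^d(e)} 2^{r|k|_1 q}\,\omega^e_{\bar m}(f, 2^{-k})_p^q\bigg)^{1/q}
\]
is an equivalent norm on $S^r_{p,q}B(\R)$. Hence it suffices to show $\mathcal{N}(f) \asymp \|f|S^r_{p,q}B(\R)\|_{(m)}^*$. Both quantities split as sums over $e \subset [d]$ with common summand $\|f|L_p(\R)\|$ at $e = \emptyset$, so I fix a nonempty $e$ and compare the two corresponding pieces.

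I introduce the dyadic decomposition of the integration domain: up to a null set, $[-1,1]^{|e|}$ is the disjoint union of the shells
\[
Q_k := \prod_{i \in e}\{h_i \in \re : 2^{-k_i-1} \leq |h_i| < 2^{-k_i}\}, \qquad k \in \N_0^d(e).
\]
On $Q_k$, $\prod_{i \in e}|h_i|^{-rq} \asymp 2^{|k|_1 rq}$ and $\int_{Q_k}\prod_{i \in e} dh_i/|h_i|$ is of order one. The direction $\|\cdot\|_{(m)}^* \lesssim \mathcal{N}$ is immediate on each $e$-piece: for every $h \in Q_k$, $|h_i| \leq 2^{-k_i}$ on each $i \in e$, so $\|\Delta_h^{\bar m, e} f|L_p(\R)\| \leq \omega^e_{\bar m}(f, 2^{-k})_p$; substituting and summing in $k$ gives the bound.

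The reverse estimate is the principal obstacle, since on the left stands a supremum over a shell while on the right only an average is available. My plan is to pass to the Fourier side and estimate each Littlewood--Paley block. Decompose $f = \sum_\ell f_\ell$ with $f_\ell := \cfi[\varphi_\ell \cf f]$ and associate to each $\ell \in \N_0^d$ the set $e_\ell := \{i \in [d] : \ell_i \geq 1\}$. For $\ell$ with $e_\ell = e$, on $\operatorname{supp}\varphi_\ell$ one has $|\xi_i| \asymp 2^{\ell_i}$ for every $i \in e$, and for $h \in Q_\ell$ this gives $|h_i\xi_i| \asymp 1$, so the multiplier $\prod_{i \in e}(e^{ih_i\xi_i} - 1)^m$ is bounded below in absolute value on $\operatorname{supp}\varphi_\ell$. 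A standard Mikhlin-type multiplier inversion then yields
\[
2^{r|\ell|_1 q}\,\|f_\ell|L_p(\R)\|^q \lesssim \int_{Q_\ell}\prod_{i \in e}|h_i|^{-rq}\,\|\Delta_h^{\bar m, e} f|L_p(\R)\|^q \prod_{i \in e}\frac{dh_i}{|h_i|}.
\]
Since the $Q_\ell$'s for $\ell$ with $e_\ell = e$ are disjoint subsets of $[-1,1]^{|e|}$, summing this estimate in $\ell$ controls $\sum_{\ell : e_\ell = e} 2^{r|\ell|_1 q}\|f_\ell|L_p\|^q$ by the $e$-piece of $\|f\|_{(m)}^*$; summing further over $e \subset [d]$ (and handling $\ell = 0$ via the trivial bound $\|f_0|L_p\| \lesssim \|f|L_p\|$) recovers $\|f|S^r_{p,q}B(\R)\|_\varphi$, which is equivalent to $\mathcal{N}(f)$ by Lemma \ref{diff1}.

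The hardest step will be the uniform Mikhlin-type inversion: producing a $\varphi_\ell$-localized multiplier $1/\prod_{i \in e}(e^{ih_i\xi_i} - 1)^m$ whose $L_p$-operator norm is uniformly bounded in $\ell$ and $h \in Q_\ell$, and then transferring the bound from $f_\ell$ to $f$ via a finite-support Fourier projection argument. The case $q = \infty$ is handled by an entirely analogous scheme with suprema in place of the weighted integrals and sums.
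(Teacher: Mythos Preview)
The paper does not actually prove this lemma: it cites \cite{U1} for the version with integration over $(0,\infty)^{|e|}$ and declares the passage to $[-1,1]^{|e|}$ ``standard,'' the point being that $\|\Delta_h^{\bar m,e}f|L_p\| \le 2^{m|e|}\|f|L_p\|$ and $\int_1^\infty s^{-rq-1}\,ds < \infty$ for $r>0$, so the tail region where some $|h_i|>1$ contributes only a multiple of $\|f|L_p\|$, already present as the $e=\emptyset$ term. Your route is different and self-contained: you go directly from the Fourier norm to $\|\cdot\|_{(m)}^*$ by inverting the difference multiplier on each dyadic block. This is correct in outline and is in fact the usual way to establish the $\|\Delta_h\|$-integral characterization when the $\omega_{\bar m}$-integral version is not already available; the price is the multiplier step, whereas the paper's route is a one-line tail estimate granted the citation. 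One caution on your sketch: the phrase ``Mikhlin-type'' is misleading at $p \in \{1,\infty\}$, where Mikhlin's theorem fails but the lemma still applies. What actually works (and what your description effectively points to) is that after the dilation $\xi_i \mapsto 2^{\ell_i}\xi_i$ the localized inverse multiplier $\tilde\varphi(\xi)\prod_{i\in e}(e^{i\tilde h_i\xi_i}-1)^{-m}$, with $|\tilde h_i|\in[1/2,1]$ and $\tilde\varphi$ a fixed cutoff, is compactly supported and uniformly smooth (since $|\tilde h_i\xi_i|$ stays in a fixed compact set disjoint from $2\pi\zz$), so its inverse Fourier transform is uniformly in $L_1(\R)$ and Young's inequality gives the uniform $L_p$-bound for all $1\le p\le\infty$. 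The transfer from $f_\ell$ to $f$ is then exactly the uniform $L_p$-boundedness of $f\mapsto \cfi[\varphi_\ell\cf f]$, which commutes with $\Delta_h^{\bar m,e}$.
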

\begin{remark}\rm
 A proof of a slightly modified statement (integration with respect  to the components $t_i$ is taken on $(0,\infty)$, not on $(0,1]$)
can be found in \cite{U1}. The reduction to the case considered in Lemma \ref{red} can be done by standard arguments, we omit details. 
\end{remark}

Later on we shall need also the following embedding result. By $C(\R)$ we denote the collection of all uniformly continuous and 
bounded functions $f: ~ \R \to \C$, equipped with the sup-norm. 
 
\begin{lemma}\label{emb1}
Let $1\le  p,q\leq \infty$ and $r\in \re$. Then
the space $S^r_{p,q} B(\R)$ is continuously embedded into $C (\R)$ if and only if either $r>1/p$ or $r=1/p$ and $q= 1$.
\end{lemma}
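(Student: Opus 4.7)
The plan is to combine the Fourier-analytic definition with a tensor-product version of the Nikol'skij inequality for the sufficiency direction, and to invoke tensor products of one-dimensional counterexamples (together with the cross-norm property from Remark~\ref{blabla}) for the necessity direction.

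For sufficiency, set $f_k := \cfi[\varphi_k \cf f]$, so that $f = \sum_{k \in \N_0^d} f_k$ converges in $\cs'(\R)$. Since $\supp \cf f_k$ is contained in a box of side lengths $\ls 2^{k_i}$, iterating the one-dimensional Nikol'skij inequality in each coordinate yields
\[
\|\, f_k \, | L_\infty(\R)\| \ls 2^{|k|_1/p}\, \|\, f_k \, | L_p(\R)\|.
\]
Therefore
\[
\sum_{k \in \N_0^d} \|\, f_k \, | L_\infty(\R)\| \ls \sum_{k \in \N_0^d} 2^{-|k|_1 (r-1/p)}\, 2^{r|k|_1}\, \|\, f_k \, | L_p(\R)\|,
\]
and applying Hölder's inequality with exponents $q$ and $q'$ reduces the issue to convergence of
\[
\sum_{k \in \N_0^d} 2^{-|k|_1 (r-1/p) q'} \;=\; \prod_{i=1}^d \sum_{k_i=0}^\infty 2^{-k_i(r-1/p)q'},
\]
which is finite precisely in the two admissible regimes: either $r > 1/p$ (each geometric factor converges since $q' < \infty$ only forces $(r-1/p)q' > 0$, and $q=\infty$ is handled by the $\ell^\infty$ version of Hölder) or $r = 1/p$ and $q = 1$ (so $q' = \infty$ and $\sup_k 2^{-|k|_1 (r-1/p)} = 1$). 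In these cases $\sum_k f_k$ converges absolutely in $L_\infty(\R)$; since each band-limited $f_k$ is uniformly continuous, the uniform limit lies in $C(\R)$ with $\|\, f \, |C(\R)\| \ls \|\, f \, | S^r_{p,q}B(\R)\|$.

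For necessity we argue by contraposition: assume neither $r > 1/p$ nor $(r,q) = (1/p, 1)$. In dimension one the embedding $B^r_{p,q}(\re) \hookrightarrow C(\re)$ is known (see \cite{Ni75, Tr83}) to fail under exactly these parameter restrictions, so we may pick $g \in B^r_{p,q}(\re) \setminus L_\infty(\re)$. Choose $h \in C_0^\infty(\re)$ with $h \not\equiv 0$ (so $h \in B^r_{p,q}(\re)$) and form the tensor product
\[
f(x) := g(x_1)\, h(x_2) \cdots h(x_d), \qquad x \in \R.
\]
By the cross-norm property (Remark~\ref{blabla}(ii)) we have $f \in S^r_{p,q}B(\R)$ with $\|\, f \, | S^r_{p,q}B(\R)\| = \|\, g \, | B^r_{p,q}(\re)\| \prod_{i=2}^d \|\, h \, | B^r_{p,q}(\re)\| < \infty$, while $g \notin L_\infty(\re)$ and $h \not\equiv 0$ force $f \notin L_\infty(\R)$, hence $f \notin C(\R)$.

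The main obstacle I expect is keeping the $q = 1$ and $q = \infty$ boundary cases of the Hölder step honest in the sufficiency argument: one genuinely needs to split off the case $r = 1/p$, $q = 1$ and verify it via the $\ell^\infty$–$\ell^1$ duality with $\sup_k 2^{-|k|_1(r-1/p)} = 1$, rather than relying on a geometric-series estimate. Everything else is a direct application of the Fourier-analytic definition, the iterated Nikol'skij inequality, and a clean import from the one-dimensional theory via the cross-norm property.
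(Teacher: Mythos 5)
Your proof is correct, and it is worth noting that the paper itself offers no proof of Lemma \ref{emb1} at all --- it simply refers to \cite[2.4.1]{ST}, \cite{Vybiral} and \cite{HaVy}. Your sufficiency argument (dyadic blocks $f_k=\cfi[\varphi_k\cf f]$, the tensorized Nikol'skij inequality of Proposition \ref{Nikolski} giving $\|f_k|L_\infty\|\ls 2^{|k|_1/p}\|f_k|L_p\|$, then H\"older in $\ell_q$--$\ell_{q'}$ over $k\in\N_0^d$) is exactly the standard argument in those references, and you handle the boundary cases $q=1$, $q'=\infty$ and $q=\infty$, $q'=1$ correctly; the factorization $\sum_{k\in\N_0^d}2^{-|k|_1(r-1/p)q'}=\prod_i\sum_{k_i\ge 0}2^{-k_i(r-1/p)q'}$ is what makes the dominating mixed case no harder than $d=1$ here. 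Your necessity argument by tensorization and the cross-norm of Remark \ref{blabla}(ii) is the same device the authors use in Step 4 of the proof of Theorem \ref{main-be}. The only point to tighten: from ``the one-dimensional embedding $B^r_{p,q}(\re)\hookrightarrow C(\re)$ fails'' you jump to ``pick $g\in B^r_{p,q}(\re)\setminus L_\infty(\re)$'', which needs the slightly stronger form of the one-dimensional negative result, namely that in the excluded parameter range $B^r_{p,q}(\re)$ contains an \emph{essentially unbounded} element (for $r\le 0$ even a non-regular distribution); this is indeed what the sharp embedding results of \cite{SiTr} provide (logarithmic or power-type singularities), and with such a $g$ the tensor product $g\otimes h\otimes\cdots\otimes h$ is essentially unbounded wherever $h\neq 0$, so it cannot agree a.e.\ with an element of $C(\R)$. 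With that citation made explicit, the proof is complete and self-contained.
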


For a proof we refer to \cite[2.4.1]{ST} ($d=2$),  \cite{Vybiral} and \cite{HaVy}. 

\begin{remark}
 \rm
 It is one of the remarkable observations that $S^r_{p,q} B(\R)$ many times  behaves like a Besov space defined on $\re$. 
\end{remark}


\subsection{Tools from Fourier analysis}


Next we will collect some required tools from Fourier analysis. We recall an adapted version of the famous Nikol’skij
inequality, see Uninskij \cite{Un1,Un2},  St\"ockert \cite{St} or \cite[Theorem 1.6.2]{ST}.

\begin{proposition}\label{Nikolski} 
Let $1 \le p_0\leq p\leq \infty$ and ${\alpha}=(\alpha_1, \ldots ,\alpha_d)\in \N_0^d$. 
Let $\Omega=[-b_1,b_1]\times \cdots \times [-b_d,b_d]$, $b_i>0$, $i=1, \ldots ,d$. 
Then there exists a positive constant $C$, independent of $(b_1, \ldots , b_d)$, such that 
\beqq
\| D^{\alpha}f|L_{p}(\R)\| \leq C\bigg(\prod_{i=1}^d b_i^{\alpha_i+\frac{1}{p_0}-\frac{1}{p}}\bigg) \|f|L_{p_0}(\R)\|
\eeqq
holds for all $f\in L_{p_0}(\R)  $ with $\supp \gf f \subset \Omega$\,.
\end{proposition}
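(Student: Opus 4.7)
The plan is to exploit that $\supp \cf f \subset \Omega$ lets us recover $f$ by convolution with a fixed smooth cutoff adapted to $\Omega$, and then derivatives and $L_p$-norms of that cutoff scale explicitly in the $b_i$.

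First I would fix an auxiliary function $\psi \in \cs(\re)$ with $\cf\psi \equiv 1$ on $[-1,1]$ and $\supp \cf\psi \subset [-2,2]$, and consider its $d$-fold tensor product $\Psi(x) := \prod_{i=1}^d \psi(x_i)$. For the rectangle $\Omega$ I would then form the dilate
\[
\Psi_b(x) := \prod_{i=1}^d b_i \, \psi(b_i x_i), \qquad x \in \R ,
\]
whose Fourier transform equals $\prod_i \cf\psi(\xi_i/b_i)$ and hence is identically $1$ on $\Omega$. Since $\supp \cf f \subset \Omega$, this gives the identity $f = \Psi_b * f$, and therefore
\[
D^{\alpha} f = (D^{\alpha}\Psi_b) * f ,
\]
understood in $\cs'(\R)$. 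The spectral localisation is thus converted into a convolution estimate.

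Next I would apply Young's convolution inequality with exponents chosen by $1 + 1/p = 1/r + 1/p_0$, i.e.\ $1/r = 1 - (1/p_0 - 1/p)$. The assumption $p_0 \le p$ ensures $r \in [1,\infty]$, so the inequality
\[
\| D^{\alpha}f \,|\, L_p(\R)\| \le \| D^{\alpha}\Psi_b \,|\, L_r(\R)\|\cdot \|f\,|\, L_{p_0}(\R)\|
\]
is legitimate. A direct computation using the tensor structure and a change of variables $y_i = b_i x_i$ gives
\[
D^{\alpha}\Psi_b(x) = \prod_{i=1}^d b_i^{\alpha_i+1}\,\psi^{(\alpha_i)}(b_i x_i),
\qquad
\| D^{\alpha}\Psi_b \,|\, L_r(\R)\| = \prod_{i=1}^d b_i^{\alpha_i+1-1/r}\, \|\psi^{(\alpha_i)}\,|\, L_r(\re)\|.
\]
Inserting $1 - 1/r = 1/p_0 - 1/p$ makes the exponent of $b_i$ equal to $\alpha_i + 1/p_0 - 1/p$, and the constant $C := \prod_i \|\psi^{(\alpha_i)}|L_r(\re)\|$ is independent of the $b_i$. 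This yields the claim.

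The only real obstacle is cosmetic: verifying that the identity $f = \Psi_b * f$ is justified in $\cs'(\R)$ when $p_0 = \infty$ (where $f$ need only be bounded), and that Young's inequality still applies. Both are handled by noting that $\Psi_b \in \cs(\R)$ and that $\cf(\Psi_b)\equiv 1$ on a neighbourhood of $\supp \cf f$, so the identity holds distributionally and the convolution $\Psi_b * f$ is a continuous bounded function; the $L_p$ estimate then follows by Young as above. The tensor-product structure of $\Psi_b$ is what produces the product $\prod_i b_i^{\alpha_i+1/p_0-1/p}$ with the exponents matching separately in each coordinate, which is the essential difference from the isotropic Nikol'skij inequality.
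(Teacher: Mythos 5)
Your argument is correct and is essentially the standard proof of the anisotropic Plancherel--Polya--Nikol'skij inequality given in the references the paper cites for this proposition (e.g.\ \cite[Theorem 1.6.2]{ST}): reproduce $f$ by convolution with a tensor-product Schwartz kernel whose Fourier transform is $1$ on $\Omega$, then apply Young's inequality with $1+1/p = 1/r + 1/p_0$ and scale out the $b_i$ coordinatewise. The only cosmetic point is that with the paper's normalisation $\cf(g*f)=(2\pi)^{d/2}\cf g\,\cf f$, so one should take $\cf\psi\equiv(2\pi)^{-1/2}$ on $[-1,1]$ (or carry the harmless factor $(2\pi)^{-d/2}$ into $C$).
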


The following construction of a maximal function is essentially  due to Peetre, but based on earlier work of Fefferman and Stein.
Let $a>0$ and  $b=(b_1, \ldots ,b_d)$, $b_i>0$, $i=1, \ldots ,d$ be fixed. Let $f$ be a regular distribution  such that  $\gf f$ is compactly
supported. We define the Peetre maximal function $P_{b,a}f$ by
\beqq
  P_{b,a}f(x) := \sup\limits_{z\in \R} \frac{|f(x-z)|}{\prod_{i=1}^d(1+|b_iz_i|)^a}\, , \qquad x \in \R\, .
\eeqq

\begin{proposition}\label{peetremax}
Let $1\le  p \leq\infty$ and $\Omega=[-b_1,b_1]\times \cdots \times [-b_d,b_d]$, $b_i>0$, $i=1, \ldots ,d$. Let further $a>1/p$. 
Then there exists a positive constant $C$,  independent of $(b_1, \ldots , b_d)$, such that
\beqq
\big\| P_{b,a}f \big|L_p(\R)\big\|\leq C\, \|f |L_p(\R)\|
\eeqq
holds for all $f \in L_p(\R)$ with $\supp (\gf f)\subset \Omega$.
\end{proposition}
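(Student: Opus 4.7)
The plan is to establish the inequality in three stages: reduce by dilation to $b_1=\cdots=b_d=1$, prove a pointwise domination of $P_{b,a}f$ by iterated one-dimensional Hardy--Littlewood maximal operators, and conclude by classical $L_q$-boundedness of each $M_j$ for $q>1$.

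\medskip\noindent\textbf{Step 1 (Dilation).} Set $g(x):=f(x_1/b_1,\dots,x_d/b_d)$. Then $\supp \gf g\subset [-1,1]^d$, a change of variables yields $\|g|L_p(\R)\|=(\prod_i b_i)^{1/p}\|f|L_p(\R)\|$, and
\[
P_{1,a}g(y)=\sup_{z\in\R}\frac{|g(y-z)|}{\prod_i(1+|z_i|)^a}=P_{b,a}f(y_1/b_1,\dots,y_d/b_d),
\]
so $\|P_{1,a}g|L_p\|=(\prod_i b_i)^{1/p}\|P_{b,a}f|L_p\|$. It therefore suffices to prove the assertion for $b=(1,\dots,1)$ with a constant independent of $b$.

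\medskip\noindent\textbf{Step 2 (Pointwise estimate).} Since $a>1/p$, pick $s$ with $1/a<s<p$, so that $sa>1$ and $p/s>1$. I claim that for every $f \in L_p(\R)$ with $\supp \gf f\subset [-1,1]^d$,
\[
P_{1,a}f(x)^{s}\leq C\,M_{1}\!\circ\!M_{2}\!\circ\!\cdots\!\circ\!M_{d}\bigl(|f|^{s}\bigr)(x),
\]
where $M_j$ denotes the one-dimensional Hardy--Littlewood maximal operator acting in the $j$-th variable. To prove this, fix $\psi_i\in\cs(\re)$ with $\gf\psi_i\equiv 1$ on $[-1,1]$ and set $\psi(x):=\prod_i\psi_i(x_i)$; then $f=f*\psi$. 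Combining rapid decay of $\psi$ with a Plancherel--Polya type sub-mean-value property (which holds for every $s>0$ because $\gf f$ is compactly supported) yields, for any $N$ as large as we please,
\[
|f(x-z)|^{s}\leq C_{N}\int_{\R}\frac{|f(u)|^{s}}{\prod_{i=1}^{d}(1+|x_i-z_i-u_i|)^{N}}\,du.
\]
Dividing by $\prod_i(1+|z_i|)^{sa}$, the crucial point is that the total weight factorises into one-dimensional pieces in each coordinate. Taking the supremum in $z_i$ inside the integral and performing a dyadic decomposition in each variable $u_i$, the condition $sa>1$ ensures that the resulting one-dimensional sum is summable and estimates the average of $|f|^{s}$ over intervals in the $i$-th direction, hence $M_i(|f|^{s})$. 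Iterating over $i=1,\dots,d$ gives the claim.

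\medskip\noindent\textbf{Step 3 (Maximal bound).} Raising to the power $s$ and applying $\|\cdot|L_{p/s}(\R)\|$ to the pointwise estimate, one invokes Fubini and the one-dimensional Hardy--Littlewood theorem on $L_{p/s}(\re)$ successively in each coordinate (legitimate since $p/s>1$):
\[
\|P_{1,a}f|L_p(\R)\|^{s}\leq C\,\bigl\|M_{1}\cdots M_{d}(|f|^{s})\,\big|\,L_{p/s}(\R)\bigr\|\lesssim \bigl\||f|^{s}\,\big|\,L_{p/s}(\R)\bigr\|=\|f|L_p(\R)\|^{s}.
\]

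\medskip\noindent The main obstacle is Step~2: one has to produce a Plancherel--Polya inequality in which the weight factorises into one-dimensional factors, so that iterated one-dimensional maximal functions (rather than the isotropic multivariate one) control $P_{1,a}f$. This tensor-product factorisation, made possible by the box-shaped Fourier support $\Omega$, is precisely what weakens the classical isotropic threshold $a>d/p$ to the dominating-mixed threshold $a>1/p$, since each $M_j$ needs only one-dimensional integrability $sa>1$ for the weight $(1+|z_j|)^{-sa}$ to be summable.
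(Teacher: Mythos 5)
Your argument is correct. The paper itself gives no proof of this proposition but simply cites \cite[Thm.~1.6.4]{ST}, and your three steps (dilation to $b=(1,\dots,1)$, tensorized Plancherel--P\'olya sub-mean-value estimate leading to $P_{1,a}f^{s}\lesssim M_1\cdots M_d(|f|^s)$ for $1/a<s<p$, then iterated one-dimensional Hardy--Littlewood bounds on $L_{p/s}$ with $p/s>1$) are exactly the standard Fefferman--Stein--Peetre argument in the mixed-norm form used in that reference. The only ingredient you invoke without proof is the sub-mean-value inequality $|f(w)|^{s}\le C\int_{\prod_i[w_i-1,w_i+1]}|f(u)|^{s}\,du$ for band-limited $f$ and arbitrary $s>0$ (the case $0<s<1$ is the nontrivial one, classically handled either by the Plancherel--P\'olya/Nikol'skij inequality or by the gradient-absorption trick); this is a legitimate classical fact, so the proof stands.
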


For a proof we refer to \cite[Thm.~1.6.4]{ST}. 
A very useful  relation between Peetre maximal function and differences is given by the following 
lemma, see \cite{U1} and  \cite[2.3.3]{ST} (two-dimensional case).

 \begin{lemma}
 Let $a>0$ and $m \in \N$.  
 Then there exists a constant $C$
 such that 
\beqq
     |\Delta^m_hf(t)| \leq  C\, \max\{1,|bh|^a\}\, \min\{1,|bh|^m\}\, P_{b,a}f(t)\,.
\eeqq
 holds for all $b >0$, all $h\neq 0$, all $t\in \re$ and all $f\in \cs'(\re)$ satisfying $\supp(\gf f) \subset
 [-b,b]$.  
 \end{lemma}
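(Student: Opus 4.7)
The plan is to distinguish the two regimes $|bh|\ge 1$ and $|bh|<1$, which correspond to the two factors $\max\{1,|bh|^a\}$ and $\min\{1,|bh|^m\}$ being active, and to handle them with entirely different tools.

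In the regime $|bh|\ge 1$ I would simply estimate the difference by the triangle inequality:
\[
|\Delta^m_h f(t)|\le \sum_{\ell=0}^m \binom{m}{\ell}\,|f(t+\ell h)|.
\]
From the very definition of the Peetre maximal function (with $z=-\ell h$),
\[
|f(t+\ell h)|\le (1+|b\ell h|)^a\, P_{b,a}f(t)\le (1+m|bh|)^a\, P_{b,a}f(t),
\]
and in this regime $(1+m|bh|)^a\le (1+m)^a |bh|^a$, which yields the bound $C|bh|^a P_{b,a}f(t)=C\max\{1,|bh|^a\}\min\{1,|bh|^m\}P_{b,a}f(t)$.

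In the regime $|bh|<1$ I would pass to the integral representation of the difference. Since $\supp\gf f\subset[-b,b]$, Paley--Wiener tells us $f$ is entire of exponential type, so it is smooth and
\[
\Delta^m_h f(t)=\int_0^h\!\!\cdots\!\int_0^h f^{(m)}(t+u_1+\cdots+u_m)\,du_1\cdots du_m,
\]
as a routine induction on $m$ shows. The key ingredient is a maximal Bernstein inequality
\[
|f^{(m)}(y)|\le C\, b^m\, P_{b,a}f(y),
\]
valid whenever $\supp\gf f\subset[-b,b]$. To prove it, pick a fixed cut-off $\chi\in C_0^\infty(\re)$ with $\chi\equiv 1$ on $[-1,1]$, set $\chi_b(\xi)=\chi(\xi/b)$, and write $f^{(m)}=\gfi[(i\xi)^m\chi_b]\ast f$. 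A direct scaling of the Schwartz kernel $\eta_1:=\gfi[(i\xi)^m\chi]$ combined with the inequality
\[
\int_{\re}|\eta_1(u)|(1+|u|)^a\,du<\infty
\]
absorbs the weight $(1+|by|)^a$ from the maximal function and leaves $b^m$ as the only scaling factor. Combined with the obvious inequality
\[
P_{b,a}f(y)\le (1+|b(y-t)|)^a\, P_{b,a}f(t),
\]
which follows from the triangle inequality $1+|b(y-w)|\le(1+|b(y-t)|)(1+|b(t-w)|)$ applied inside the defining supremum, the integrand is bounded by $C b^m (1+m|bh|)^a P_{b,a}f(t)\le C'b^m P_{b,a}f(t)$ since $|bh|<1$. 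Integrating over a cube of side $|h|$ produces $|h|^m$ and hence the required factor $|bh|^m=\max\{1,|bh|^a\}\min\{1,|bh|^m\}$.

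The technical heart of the argument, and the only step that is not a bookkeeping exercise, is the maximal Bernstein inequality $|f^{(m)}(y)|\le C b^m P_{b,a}f(y)$ in the small-$|bh|$ regime; everything else is triangle inequality, Peetre's lemma, and a sub-multiplicativity trick. The large-$|bh|$ regime needs no smoothness of $f$ at all, while the small-$|bh|$ regime uses the band-limitation essentially (through Paley--Wiener and the Schwartz decay of $\eta_1$), which is precisely where the exponent $m$ becomes available.
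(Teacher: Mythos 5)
Your argument is correct: the split into the regimes $|bh|\ge 1$ (trivial estimate via the definition of $P_{b,a}f$) and $|bh|<1$ (integral representation of $\Delta_h^m f$ combined with a Bernstein-type maximal inequality $|f^{(m)}(y)|\le C b^m P_{b,a}f(y)$ and the submultiplicativity of the weight) is exactly the standard proof. The paper itself gives no proof but refers to \cite{U1} and \cite[2.3.3]{ST}, where essentially this argument appears, so your proposal matches the cited route.
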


 Applying the above result iteratively  with respect to components in $e\subset [d]$ we get the following modified 
 version in the multivariate situation.
 
 \begin{lemma}\label{ddim-1}
 Let  $a>0$, $e\subset [d]$, $m \in \N_0^d$  and $h =
 (h_1, \ldots ,h_d) \in \R$. Let further $f\in
 \mathcal{S}'(\R)$ with $\supp(\mathcal{F} f) \subset Q_{b}$, where
 $$
   Q_{b}:=[-b_1,b_1]\times \ldots \times [-b_d,b_d]\,,\ \ b_i>0,\ \ i=1, \ldots ,d.
 $$
 Then there exists a constant $C>0$ (independent of $f$, $b$, $x$
 and $h$) such that
 \begin{equation*}
 |\Delta^{{m},e}_h  f(x)|
 \leq C\, \bigg(\prod\limits_{i\in e}\, \max\big\{1,|b_ih_i|^a\big\}\, \min\big\{1,|b_ih_i|^{m_i}\big\}
 \bigg)\,  P_{b,a} f(x)
 \end{equation*}
 holds for all $x\in \R$. 
 \end{lemma}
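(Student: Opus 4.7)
The plan is to iterate the one-dimensional lemma stated just above, peeling off one difference operator at a time along the coordinate directions $i\in e$. Since the indices in $e$ are distinct, the operators $\Delta^{m_i}_{h_i,i}$ commute, so I may fix any ordering $e=\{i_1,\ldots,i_k\}$ and write
\[
\Delta^{{m},e}_h f(x) \, = \, \Delta^{m_{i_1}}_{h_{i_1},i_1}\Big[\Delta^{m_{i_2}}_{h_{i_2},i_2}\cdots \Delta^{m_{i_k}}_{h_{i_k},i_k} f\Big](x).
\]

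Before applying the 1D lemma to the outermost operator I must verify that each intermediate function $g:=\Delta^{m_{i_2}}_{h_{i_2},i_2}\cdots \Delta^{m_{i_k}}_{h_{i_k},i_k} f$, viewed as a function of $x_{i_1}$ with the other coordinates held fixed, has its one-dimensional Fourier transform supported in $[-b_{i_1},b_{i_1}]$. This follows from $\supp\cf f\subset Q_b$ together with the fact that each $\Delta^{m_j}_{h_j,j}$ acts as the Fourier multiplier $(e^{ih_j\xi_j}-1)^{m_j}$, which does not enlarge spectral support. Since the Fourier support is compact, Paley--Wiener moreover guarantees that everything in sight is entire, so the pointwise slicing causes no trouble.

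Applying the 1D lemma in direction $i_1$ now produces the factor $\max\{1,|b_{i_1}h_{i_1}|^a\}\min\{1,|b_{i_1}h_{i_1}|^{m_{i_1}}\}$ in front of $\sup_{z_{i_1}\in\re}|g(x-z_{i_1}\mathbf{e}_{i_1})|/(1+|b_{i_1}z_{i_1}|)^a$, where $\mathbf{e}_{i_1}$ denotes the $i_1$-th standard unit vector in $\R$. For each fixed $z_{i_1}$ I repeat the same argument on $\Delta^{m,e\setminus\{i_1\}}_h f(\,\cdot\, - z_{i_1}\mathbf{e}_{i_1})$ in direction $i_2$ (the translation does not affect the Fourier support, so the spectral hypothesis of the 1D lemma is still met), pull the resulting constant outside the sup over $z_{i_1}$, and continue. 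After $|e|$ such steps I arrive at
\[
|\Delta^{{m},e}_h f(x)| \, \le \, C\,\bigg(\prod_{i\in e}\max\{1,|b_ih_i|^a\}\min\{1,|b_ih_i|^{m_i}\}\bigg)\sup_{(z_i)_{i\in e}\in\re^{|e|}}\frac{\big|f\big(x-\sum_{i\in e} z_i\mathbf{e}_i\big)\big|}{\prod_{i\in e}(1+|b_iz_i|)^a}.
\]

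To replace the iterated supremum by $P_{b,a}f(x)$, observe that it is taken over vectors $\tilde z\in\R$ of the special form $\tilde z_i=z_i$ for $i\in e$ and $\tilde z_i=0$ for $i\notin e$. For such $\tilde z$ every factor $(1+|b_i\tilde z_i|)^a$ with $i\notin e$ equals $1$, so the iterated sup coincides with a supremum of $|f(x-\tilde z)|/\prod_{i=1}^d(1+|b_i\tilde z_i|)^a$ over this subset of $\R$, which is obviously bounded above by $P_{b,a}f(x)$. The only mildly delicate ingredient in the whole argument is the spectral justification for slicing during the iteration; once that is noted the rest is mechanical, and I do not expect any serious obstacle.
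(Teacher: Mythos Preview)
Your proposal is correct and follows exactly the route the paper indicates: the paper's entire proof is the single sentence ``Applying the above result iteratively with respect to components in $e\subset [d]$ we get the following modified version in the multivariate situation,'' and you have written out precisely that iteration with the spectral-support and Paley--Wiener details made explicit. There is nothing to add.
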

 Let $m\in \N$. Then  $C^m_\mix (\R)$ is the collection of all continuous functions 
 $f:~\R\to \C $
 such that all  derivatives $D^\alpha f$ with $\max_{j=1, \ldots  ,d} \, \alpha_j\le m$ are continuous 
 and
$
  \sup_{|\alpha|_\infty\leq m} \sup_{x \in \R}\, |\, D^{\alpha}f(x)\, |< \infty \, .
$
 \begin{lemma}\label{ddim-2}  Let $b = (b_1, \ldots ,b_d)>0$, $a>0$, $e\subset [d]$, $m \in \N$, $\psi \in C^k_{\mix}(\R)$ with
 $k\geq m$ and $h =
 (h_1, \ldots ,h_d) \in \R$. Let further $f\in
 \mathcal{S}'(\R)$ with $\supp(\mathcal{F} f) \subset Q_{b}$, where
 $$
   Q_{b}:=[-b_1,b_1]\times \ldots \times [-b_d,b_d]\,.
 $$
 Then there exists a constant $C>0$ (independent of $f$, $b$
 and $h$) such that
 \begin{equation*}
 |\Delta^{\bar{m},e}_h (\psi\, \cdot \,  f)(x)|\\
 \leq C_{m,a,\psi}\bigg(\prod\limits_{i\in e}\max\big\{1,|b_ih_i|^a\big\}\min\big\{1,|b_ih_i|^m\big\}\bigg) P_{b,a} f(x)
 \end{equation*}
 holds for all $x\in \R$. 
 \end{lemma}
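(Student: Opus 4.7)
The plan is to reduce the estimate for $\Delta^{\bar{m},e}_h(\psi \cdot f)$ to the one already established for $f$ alone in Lemma \ref{ddim-1}, by invoking the discrete Leibniz formula for differences coordinate by coordinate. In a single direction $j \in e$,
\[
\Delta^{m}_{h_j, j}(\psi f)(x) = \sum_{\ell=0}^{m} \binom{m}{\ell}\,(\Delta^{\ell}_{h_j, j}\psi)(x)\,(\Delta^{m-\ell}_{h_j, j} f)(x + \ell h_j e_j),
\]
and iterating this identity across all $i \in e$ produces a representation
\[
\Delta^{\bar{m}, e}_h(\psi f)(x) = \sum_{\mu}\, c_\mu\, (\Delta^{\mu,e}_h \psi)(x)\,(\Delta^{\bar{m}-\mu,e}_h f)(x + \mu \diamond h),
\]
where $\mu \in \N_0^d(e)$ with $0 \leq \mu_i \leq m$ for $i \in e$ and $c_\mu$ are binomial coefficients.

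I would then bound the two factors in each summand separately. The $\psi$-factor is controlled by the smoothness hypothesis $\psi \in C^k_{\mix}$, $k \geq m$: combining the integral representation
\[
\Delta^{\mu_i}_{h_i,i}\psi(x) = \int_0^{h_i}\!\cdots\!\int_0^{h_i} \partial_i^{\mu_i}\psi\bigl(x + (t_1+\cdots+t_{\mu_i})e_i\bigr)\,dt_1\cdots dt_{\mu_i}
\]
with the trivial estimate $\|\Delta^{\mu_i}_{h_i,i}\psi\|_\infty \leq 2^{\mu_i}\|\psi\|_\infty$ yields $|(\Delta^{\mu,e}_h \psi)(x)| \leq C_\psi \prod_{i \in e}\min\{1, |h_i|^{\mu_i}\}$. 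The $f$-factor is bounded by Lemma \ref{ddim-1} applied with the exponent vector $\bar{m} - \mu$, giving
\[
|(\Delta^{\bar{m}-\mu,e}_h f)(x + \mu \diamond h)| \leq C \prod_{i \in e}\max\{1,|b_ih_i|^a\}\min\{1,|b_ih_i|^{m-\mu_i}\}\,P_{b,a}f(x + \mu \diamond h).
\]
Finally, the translation of $P_{b,a}f$ by $\mu \diamond h$ is absorbed via the pointwise inequality $P_{b,a}f(y + z) \leq \prod_i(1+|b_iz_i|)^a\,P_{b,a}f(y)$, which follows directly from the definition and the triangle inequality.

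After combining the three bounds and summing over $\mu$, the claim reduces to verifying the coordinatewise inequality
\[
\min\{1,|h_i|^{\mu_i}\}\min\{1,|b_ih_i|^{m-\mu_i}\}\bigl(\max\{1,|b_ih_i|^{a}\}\bigr)^{2} \leq C\,\max\{1,|b_ih_i|^a\}\min\{1,|b_ih_i|^m\}
\]
for each $i \in e$ and each $0 \leq \mu_i \leq m$. The doubled power of $\max\{1,|b_ih_i|^a\}$ is harmless, since Lemma \ref{ddim-1} can be invoked with $a/2$ in place of $a$; the remaining estimate is an elementary case analysis based on the size of $|b_ih_i|$. The hard part will be precisely this bookkeeping, that is, tracking how the small-$h$ and large-$h$ regimes of the $\psi$-differences interact with the two regimes $|b_ih_i| \leq 1$ and $|b_ih_i| > 1$ of the $f$-differences, while ensuring that the resulting constant depends only on $m$, $a$ and $\psi$, and not on $b$ or $h$.
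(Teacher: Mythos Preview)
The paper does not supply a proof here; it simply cites \cite{NUU}. Your strategy --- iterated Leibniz rule, then Lemma~\ref{ddim-1} on the $f$-factor, then the shift inequality for $P_{b,a}$ --- is the natural one and is correct up to the last step. The gap is that the ``elementary case analysis'' you announce cannot succeed: the coordinatewise inequality you write is \emph{false} without a lower bound on $b_i$. Take any $1\le \mu_i\le m$ and keep $|h_i|\le 1$, $|b_ih_i|\le 1$; then the left side equals $|h_i|^{\mu_i}|b_ih_i|^{m-\mu_i}$ while the right side is $C|b_ih_i|^m$, so you would need $1\le C\, b_i^{\mu_i}$, which fails as $b_i\to 0$. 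This is not a bookkeeping slip but a defect in the lemma as literally stated: with $C$ independent of $b$ it is false. In dimension one with $m=1$, take $f\equiv 1$ (so $\supp\gf f=\{0\}\subset[-b,b]$ for every $b>0$ and $P_{b,a}f\equiv 1$) and any $\psi\in C^1_{\mix}(\re)$ with $\psi'(0)\neq 0$; then $|\Delta^1_h(\psi f)(0)|\asymp |h|$ for small $h$, whereas the claimed bound is $C|bh|\to 0$ as $b\to 0$.

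The remedy is to add the hypothesis $b_i\ge c_0>0$ (equivalently, to let the constant depend on $\max_i\{1,b_i^{-m}\}$). With $b_i\ge 1$, say, one has $\min\{1,|h_i|^{\mu_i}\}\le \min\{1,|b_ih_i|^{\mu_i}\}$, and then
\[
\min\{1,|b_ih_i|^{\mu_i}\}\,\min\{1,|b_ih_i|^{m-\mu_i}\}\le \min\{1,|b_ih_i|^{m}\}
\]
is immediate; the rest of your argument goes through. In every use of Lemma~\ref{ddim-2} in this paper the $b_i$ are outer endpoints of dyadic Littlewood--Paley blocks $\varphi_{k}$, hence $b_i\ge 3/2$, so nothing downstream is affected; you should simply flag the lemma statement as needing this correction rather than attempt an estimate that cannot hold.
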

 
 \begin{remark}\rm
For a proof we refer to \cite{NUU}. Note that the constant $C_{m,a,\psi}$ depends on $m$, $a$, and $\sup_{ |\alpha|_\infty \leq k}\sup_{x\in \R}|D^{\alpha}\psi(x)|  $ only.
 \end{remark}


\section{Pointwise multipliers for Besov spaces of dominating mixed smoothness}
\label{main}



\subsection{Some generalities on pointwise multipliers}


For a quasi-Banach space $X$ of functions  we shall call a function $g$ a pointwise multiplier
if $g \, \cdot \, f \in X$ for all $f \in X$
(this is includes, of course, that the operation $f \mapsto g \, \cdot \, f$ must be well defined for all $f\in X$).
If $X \hookrightarrow L_p (\Omega)$ for some $p$ (here $\Omega$ is a domain in $\R$),  
as a consequence of the Closed Graph Theorem, we obtain that the liner operator 
$T_g : ~ f \mapsto g \, \cdot \, f$, associated to such a pointwise multiplier, must be continuous in $X$,
see \cite[p.~33]{MS2}. By $M(X)$ we denote the set of all pointwise multipliers for $X$, i.e.,
\[
M(X):= \big\{g:~ g  \cdot   f \in X \quad \forall f\in X\big\}
\]
and equip this set with the norm of the operator $T_g$
\[
\|\, g\, |M(X)\|:= \| \, T_g : ~ X \to X\| = \sup_{\|f|X\|\le 1}\, \| \, g \cdot f \, |X\|\, .
\]
We shall call $X$ an algebra with respect to pointwise multiplication 
(for short a multiplication algebra) if $f  \cdot  g \in X$ for all $f,g\in X$ and there exist a constant $C>0$ such that 
\beqq
\| f\cdot g\,|\, X\| \leq C\|\,f\,|\,X\| \cdot \|\, g\,|\,X\|
\eeqq
holds for all $f,g\in X$. It is obvious that if $X$ is a multiplication algebra we have, $X\hookrightarrow M(X)$.

\begin{lemma}\label{glatt}
Let $1\le p,q\le \infty$ and $r\in \re$. 
Then we have  $C_0^\infty (\R) \subset M(S^r_{p,q}B(\R))$.
\end{lemma}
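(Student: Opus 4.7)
The plan is to combine the difference characterization of $S^r_{p,q}B(\R)$ (Proposition \ref{diff}) with the pointwise bound of Lemma \ref{ddim-2}, which was tailored precisely for estimating a smooth factor against a Littlewood--Paley piece. The approach works cleanly for $r>1/p$; I sketch the extension to $0<r\le 1/p$ and $r\le 0$ at the end.

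Fix $\psi\in C_0^\infty(\R)$ and $f\in S^r_{p,q}B(\R)$ with $r>1/p$. Choose $m\in\N$ with $m>r$ and $a\in(1/p,r)$, and decompose $f=\sum_{\ell\in\N_0^d}f_\ell$ with $f_\ell:=\cfi[\varphi_\ell\cf f]$, each of whose Fourier transform lives in a box of componentwise scale $2^{\ell_i}$. For $e\subset[d]$, Lemma \ref{ddim-2} applied with $b_i=3\cdot 2^{\ell_i-1}$ yields the pointwise inequality
\[
\big|\Delta_h^{\bar m,e}(\psi f_\ell)(x)\big|\le C_\psi\Big(\prod_{i\in e}\max\{1,|2^{\ell_i}h_i|^a\}\min\{1,|2^{\ell_i}h_i|^m\}\Big)\,P_{2^\ell,a}f_\ell(x);
\]
taking $L_p$-norm and invoking Proposition \ref{peetremax} (which uses $a>1/p$) replaces $P_{2^\ell,a}f_\ell$ by $\|f_\ell\,|\,L_p(\R)\|$. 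Then taking the supremum over $|h_i|\le 2^{-k_i}$, $i\in e$, and using $\omega_{\bar m}^e(\psi f,2^{-k})_p\le\sum_\ell\omega_{\bar m}^e(\psi f_\ell,2^{-k})_p$ produces, after weighting by $2^{r|k|_1}$ (with $|k|_1=\sum_{i\in e}k_i$ for $k\in\N_0^d(e)$), a convolution-type estimate whose per-direction kernels decay precisely because $1/p<a<r<m$. A direction-by-direction discrete Hardy inequality then gives $\big(\sum_{k\in\N_0^d(e)}2^{r|k|_1 q}\omega_{\bar m}^e(\psi f,2^{-k})_p^q\big)^{1/q}\le C\|f\,|\,S^r_{p,q}B(\R)\|$, and summing over $e\subset[d]$ yields $\|\psi f\,|\,S^r_{p,q}B(\R)\|\le C_\psi\|f\,|\,S^r_{p,q}B(\R)\|$.

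The principal obstacle is the tension between $a>1/p$ (required by Proposition \ref{peetremax}) and $a<r$ (required for the Hardy summability), which confines the above argument to $r>1/p$. To cover $0<r\le 1/p$ I would further decompose $\psi=\sum_j\psi_j$ with $\psi_j:=\cfi[\varphi_j\cf\psi]$; since $\cf\psi\in\cs(\R)$, one verifies $\|\psi_j\,|\,L_\infty(\R)\|\le C_N 2^{-N|j|_1}$ for every $N$, supplying arbitrarily much extra decay to close the gap via a paraproduct-style summation. For $r\le 0$ the cleanest route is duality: using $(S^r_{p,q}B(\R))'\simeq S^{-r}_{p',q'}B(\R)$ (for finite $p,q$; the endpoint cases by an approximation argument in $\cs(\R)$) together with the self-adjointness of pointwise multiplication in the distributional pairing, the multiplier property transfers from positive to nonpositive smoothness.
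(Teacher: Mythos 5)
Your argument for $r>1/p$ is sound and uses the same toolkit as the paper (which, however, gets this range for free by quoting the algebra property, Theorem \ref{main-be}). The problems begin below that threshold.

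For $0<r\le 1/p$ what you offer is a plan, not a proof: ``one verifies \dots to close the gap via a paraproduct-style summation'' leaves open precisely the part that is delicate in the dominating mixed setting, namely how the spectra of the products $\psi_j f_\ell$ (contained in boxes of side $\sim 2^{\max(j_i,\ell_i)}$ in the $i$-th coordinate) are recombined, coordinate by coordinate, into the Littlewood--Paley pieces of $\psi f$. More importantly, the tension you describe between $a>1/p$ and $a<r$ is an artifact of your method. The paper's proof for $0<r\le 1/p$ (in fact for all $r>0$) never applies the Peetre maximal function to $f$: it uses the Leibniz rule for differences, $\Delta_h^{2\bar m,e}(\psi f)=\sum_u\binom{2\bar m}{u}\,\Delta_h^{2\bar m-u,e}\psi(\cdot+u\diamond h)\,\Delta_h^{u,e}f$, and observes that whenever a difference of order $\ge m$ falls on $\psi\in C_0^\infty(\R)$ one gains a factor $\prod_{i}2^{-k_im}$ from the smoothness of $\psi$, so that $\|f\,|L_p(\R)\|$ suffices for the other factor, while whenever the order-$\ge m$ difference falls on $f$ one reads off $\omega^{e}_{\bar m}(f,2^{-k})_p$ directly from the characterization by differences; the mixed cases factor coordinatewise. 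No Littlewood--Paley decomposition and no maximal function are needed.

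The genuine gap is at $r=0$. Your duality step maps $S^0_{p,q}B(\R)$ to $S^0_{p',q'}B(\R)$, i.e., to another space of smoothness zero, so the multiplier property cannot be ``transferred from positive to nonpositive smoothness'' there --- the argument is circular at $r=0$, and the lemma is claimed for all $r\in\re$. The paper closes this case by complex interpolation between $S^{r_1}_{p,q}B(\R)$ with $r_1>0$ and $S^{r_2}_{p,q}B(\R)$ with $r_2<0$ (Proposition \ref{inter2}), supplemented by duality through the closure $\mathring{S}^r_{p,q}B(\R)$ of $\cs(\R)$ when $\max(p,q)=\infty$; the latter point is also glossed over in your ``approximation argument'' aside. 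As written, your proposal establishes the lemma only for $r\ne 0$, and only modulo the completion of the paraproduct sketch in the range $0<r\le 1/p$.
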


Let $\psi$ be a non-negative $C_0^{\infty}(\R)$ function. We put $
\psi_{\mu}(x)=\psi(x-\mu)$, $\mu\in \Z,\ x\in \R
$
and assume that
\be\label{ws-10}
\sum_{\mu\in \Z} \psi_{\mu}(x)=1\qquad  \text{ for all}\ x\in \R\,.
\ee

\begin{definition}\label{def-unif}  
Let the Banach space $X$ be continuously embedded into $\cs' (\R)$.
Let  $\psi$ be as in \eqref{ws-10}. 
Then $X_{\unif}$ is the collection of all $f\in \cs' (\R)$ such that
\beqq
\|\, f\, | X_{\unif}\|_{\psi} := \sup_{\mu\in \Z} \, \|\, \psi_{\mu}\, \cdot \, f\, |  X\|<\infty.
\eeqq
\end{definition}

\begin{remark}\rm 
The spaces  $ S^r_{p,q} B(\R)_{\unif}$ are independent of the special choice of $\psi$ 
(in the sense of equivalent norms). This is an immediate  consequence of Lemma  \ref{glatt}.  
\end{remark}

\begin{lemma}\label{uniform} 
Let $1\le p,q\le \infty$ and $r\in \re$. 
Then the continuous embedding  
\[
M(S^r_{p,q}B(\R)) \hookrightarrow S^r_{p,q}B(\R)_\unif 
\]
takes place.
\end{lemma}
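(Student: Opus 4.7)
The plan is to use the shifted cut-offs $\psi_\mu$ themselves as test elements of $S^r_{p,q}B(\R)$ and apply the multiplier bound directly. The argument is short, and the only point that needs a line of justification is translation invariance of the Besov norm.

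First I would record two auxiliary facts. Since $\psi \in C_0^\infty(\R)$ we have $\psi \in S^r_{p,q}B(\R)$, because $\cf \psi$ is rapidly decreasing and thus $\| \cfi[\varphi_k \cf \psi]\,|\,L_p(\R)\|$ decays faster than any polynomial in $2^{|k|_1}$; consequently the defining sum is finite. Next, the Fourier-analytic norm is translation invariant: for any $\mu \in \Z$,
\[
\cf[\psi(\,\cdot\, - \mu)](\xi) = e^{-i\mu \cdot \xi}\, \cf \psi(\xi),
\]
so $\cfi[\varphi_k \cdot \cf \psi_\mu](x) = \cfi[\varphi_k \cdot \cf \psi](x - \mu)$, and taking $L_p$-norms gives
\[
\|\,\psi_\mu\,|\, S^r_{p,q}B(\R)\|_{\varphi} = \|\,\psi\,|\, S^r_{p,q}B(\R)\|_{\varphi}\qquad \text{for all } \mu \in \Z.
\]

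Now let $g \in M(S^r_{p,q}B(\R))$. Since $\psi_\mu$ is a smooth compactly supported function, the pointwise product $\psi_\mu \cdot g$ is well defined, and by the definition of the multiplier norm
\[
\|\,\psi_\mu \cdot g\,|\, S^r_{p,q}B(\R)\| = \|\,g \cdot \psi_\mu\,|\, S^r_{p,q}B(\R)\| \le \|\,g\,|\,M(S^r_{p,q}B(\R))\| \cdot \|\,\psi_\mu\,|\, S^r_{p,q}B(\R)\|.
\]
Combining this with the translation invariance from the previous step yields
\[
\|\,\psi_\mu \cdot g\,|\, S^r_{p,q}B(\R)\| \le C \,\|\,g\,|\,M(S^r_{p,q}B(\R))\|,
\]
with $C := \|\,\psi\,|\, S^r_{p,q}B(\R)\|$ independent of $\mu$. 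Taking the supremum over $\mu \in \Z$ produces
\[
\|\,g\,|\, S^r_{p,q}B(\R)_{\unif}\|_{\psi} \le C\,\|\,g\,|\,M(S^r_{p,q}B(\R))\|,
\]
which is exactly the asserted continuous embedding.

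There is no real obstacle here. The only point requiring a moment's thought is confirming translation invariance of the space, but this is immediate from the Fourier-analytic description since modulation by $e^{-i\mu\cdot\xi}$ leaves all $L_p$-norms of the Littlewood--Paley pieces unchanged up to a translation of $x$.
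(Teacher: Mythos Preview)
Your proof is correct and follows essentially the same route as the paper: both arguments hinge on the translation invariance of the norm $\|\,\cdot\,|S^r_{p,q}B(\R)\|$ and on testing the multiplier $g$ against the shifted bumps $\psi_\mu$. One small difference worth noting: the paper invokes Lemma~\ref{glatt} (that $C_0^\infty \subset M(S^r_{p,q}B)$) and then exhibits the chain $\|\psi_\mu f\| = \|\psi \cdot f(\cdot+\mu)\| \le c_\psi \|f(\cdot+\mu)\| = c_\psi \|f\|$, effectively bounding the multiplier norm of $\psi_\mu$ uniformly in $\mu$; your version is slightly more economical because you only use the simpler fact $\psi_\mu \in S^r_{p,q}B(\R)$ (no need for Lemma~\ref{glatt}) together with $\|\psi_\mu|S^r_{p,q}B\| = \|\psi|S^r_{p,q}B\|$, and then apply the definition of the multiplier norm directly to $g\cdot\psi_\mu$. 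Either way the substance is identical.
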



\subsection{Pointwise multipliers and algebras}


Our first  main  result  with respect to   Besov spaces of dominating mixed smoothness reads as follows.

\begin{theorem}\label{main-be}
Let $1 \le p,q\leq \infty$ and $r \in \re$. Then $S^r_{p,q}B(\R)$ is a multiplication algebra if and only if 
\begin{itemize}
\item either  $r>1/p$
\item or $1\leq p < \infty$, $r=1/p$ and $q = 1$.
\end{itemize}  
\end{theorem}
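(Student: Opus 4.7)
The statement is an ``if and only if'', so both directions must be addressed. For the \emph{necessity} direction, the plan is to reduce to the failure of the algebra property for $B^r_{p,q}(\re)$ outside the claimed parameter range, which is classical (see \cite{SS, KS16b}). Pick $f_1 \in B^r_{p,q}(\re)$ with $f_1^2 \notin B^r_{p,q}(\re)$ and set
\[
f(x) := f_1(x_1) \prod_{j=2}^d \chi(x_j), \qquad \chi \in C_0^\infty(\re),\ \chi \equiv 1 \text{ near } 0.
\]
By the cross-norm property of Remark \ref{blabla} one has $f \in S^r_{p,q}B(\R)$ with $\|f|S^r_{p,q}B(\R)\| \asymp \|f_1|B^r_{p,q}(\re)\|$. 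If $S^r_{p,q}B(\R)$ were a multiplication algebra, then $f^2 \in S^r_{p,q}B(\R)$; localizing $f^2$ in the variables $x_2, \ldots, x_d$ by multiplying with a tensor product of cut-offs (harmless by Lemma \ref{glatt}) and unwinding the cross-norm identity would force $f_1^2 \in B^r_{p,q}(\re)$, a contradiction.

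For the \emph{sufficiency} direction the plan is to work with the equivalent difference norm from Lemma \ref{red} with an integer $m>r$, and to prove
\[
\|fg|S^r_{p,q}B(\R)\|_{(m)}^* \lesssim \|f|S^r_{p,q}B(\R)\| \cdot \|g|S^r_{p,q}B(\R)\|.
\]
For every $e \subset [d]$, iterating the one-dimensional Leibniz rule for differences in every coordinate $i \in e$ expands $\Delta_h^{\bar m, e}(fg)(x)$ as a finite sum of products of the form $(\Delta_h^{\vec\alpha,e}f)(x + \sigma)\,(\Delta_h^{\vec\beta,e}g)(x + \tau)$, with $\alpha_i + \beta_i = m$ for every $i \in e$ and shifts $\sigma,\tau$ that are integer multiples of the coordinates of $h$. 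Grouping by $e_1 := \{i \in e : \beta_i = 0\}$ and $e_2 := e\setminus e_1$ and applying H\"older with exponents $(p,\infty)$ gives
\[
\|\Delta_h^{\bar m, e}(fg)\|_{L_p(\R)} \lesssim \sum_{e = e_1 \sqcup e_2}\|\Delta_h^{\vec\alpha, e_1}f\|_{L_p}\cdot\|\Delta_h^{\vec\beta, e_2}g\|_{L_\infty},
\]
plus the symmetric contribution where the roles of $f$ and $g$ are interchanged.

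The \emph{main obstacle} will be to control the $L_\infty$ mixed differences of $g$ arising when $e_2 \neq \emptyset$. My plan here is to invoke the dominating mixed Sobolev-type embedding
\[
S^r_{p,q}B(\R) \hookrightarrow S^{r - 1/p}_{\infty, q}B(\R) \qquad (r > 1/p),
\]
obtained by iterating the Nikol'skij inequality (Proposition \ref{Nikolski}) coordinatewise. This provides, for $L_\infty$ mixed differences of $g$ in the directions of $e_2$, the decay weight $\prod_{i \in e_2}|h_i|^{r - 1/p}$ in a $q$-summable sense. A Fubini factorization of the $h$-integration in Lemma \ref{red} along $e = e_1 \sqcup e_2$ will then pair the $(p,q)$-summability of differences of $f$ in directions $e_1$ against the $(\infty,q)$-summability of differences of $g$ in directions $e_2$, yielding the desired product inequality.

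The endpoint $r = 1/p$, $q = 1$ is more delicate because the embedding above loses its gain. In that case the plan is to switch to the Fourier-analytic description, decompose $f = \sum_k f_k$ and $g = \sum_\ell g_\ell$ with $f_k := \cfi[\varphi_k \cf f]$, and estimate $f_k g_\ell$ by means of the Peetre maximal functions of Proposition \ref{peetremax} together with Lemmas \ref{ddim-1} and \ref{ddim-2}, exploiting the $\ell_1$-summability (which already forces $S^{1/p}_{p,1}B(\R) \hookrightarrow L_\infty$ via Lemma \ref{emb1}) to absorb the vanishing endpoint gain without loss.
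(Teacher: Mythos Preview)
Your necessity argument via tensor products and the cross-norm property is exactly the paper's Step~4. Your handling of the extreme Leibniz terms (all high-order differences on one factor, $L_\infty$ on the other via Lemma~\ref{emb1}) matches the paper's Step~2.

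\medskip

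The gap is in the mixed terms --- those Leibniz summands where $e_2 := \{i\in e:\beta_i\ge 1\}$ and $e_1 := e\setminus e_2$ are \emph{both} nonempty. Your plan is to apply global H\"older $L_p\times L_\infty$ and then control $\|\Delta_h^{\vec\beta,e_2}g\|_{L_\infty}$ through the embedding $S^r_{p,q}B\hookrightarrow S^{r-1/p}_{\infty,q}B$. After your Fubini factorization the $e_2$-integral reads
\[
\int_{[-1,1]^{|e_2|}}\prod_{i\in e_2}|h_i|^{-rq}\,\big\|\Delta_h^{\vec\beta,e_2}g\big\|_{L_\infty}^q\,\prod_{i\in e_2}\frac{dh_i}{|h_i|},
\]
but the embedding only delivers decay $\prod_{i\in e_2}|h_i|^{(r-1/p)}$ in an $\ell_q$ sense; the leftover $\prod_{i\in e_2}|h_i|^{-1/p}$ is not integrable against $dh_i/|h_i|$. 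Equivalently, your second factor is a piece of $\|g|S^r_{\infty,q}B\|$, and $S^r_{p,q}B\not\hookrightarrow S^r_{\infty,q}B$ for $p<\infty$. This is not a technicality: Theorem~\ref{negative} shows that the Moser-type inequality --- which is exactly what a global $L_p\times L_\infty$ split yields --- \emph{fails} for $S^r_{p,q}B(\R)$ when $d>1$. The isotropic proof you are adapting succeeds only because there every Leibniz term has at least one factor carrying an order-$\ge m/2$ difference in \emph{the} (single) direction; in the mixed setting different coordinates can land high order on different factors simultaneously.

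\medskip

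The paper's Step~3 repairs this by a genuinely different mechanism. One expands $f=\sum_{\ell}f_{k^1+\ell}$ and $g=\sum_{\nu}g_{k^2+\nu}$ into Littlewood--Paley pieces and applies a \emph{coordinate-wise} H\"older: for $f_{k^1+\ell}$ take the supremum in the $e_2$-variables and $L_p$ in the rest, while for $g_{k^2+\nu}$ take the supremum in the $(e_0\cup e_1)$-variables and $L_p$ in the $e_2$-variables. Nikol'skij's inequality (Proposition~\ref{Nikolski}) then converts each partial supremum back to a full $L_p$-norm at the cost of factors $\prod_{i\in e_2}2^{\ell_i/p}$ and $\prod_{i\in e_0\cup e_1}2^{\nu_i/p}$; Lemma~\ref{ddim-1} and Proposition~\ref{peetremax} control the differences of the pieces. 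The exponential bookkeeping in \eqref{ws-extra} shows these Nikol'skij losses are absorbed precisely by the margin $r-1/p>0$ (together with $m-r>0$ and $r>0$), giving geometric summability in $\ell,\nu$. The endpoint $r=1/p$, $q=1$ (Step~3.2) uses the same decomposition but exploits $\ell_1$-summability to avoid the vanishing factor $r-1/p$. Your endpoint sketch is in the right spirit, but the $r>1/p$ case already needs this Fourier-analytic machinery, not only the endpoint.
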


\begin{remark}\rm
There is a rich literature concerning this problem for the isotropic Besov spaces $B^s_{p,q} (\R)$.
We refer to Peetre \cite{Pe1}, Triebel \cite{Tr1}, \cite[2.6.2]{Tr78} and Mazya, Shaposnikova \cite{MS1}, \cite{MS2}. 
The little supplement, that $B^0_{\infty, q} (\R)$, $0< q \le 1$, is not an algebra, has been proved in  
\cite[4.6.4,~4.8.3]{RS}.
With respect to the dominating mixed Besov spaces we refer to \cite{KS16}, where sufficient conditions in case $p=q$ are treated.
\end{remark}

Our second main result consists in the description of the multiplier space under certain restrictions.

\begin{theorem}\label{mul-spaceb}
 Let $1  \le p \leq q \leq \infty$ and $r>1/p$. Then
\be\label{ws-02}
M(S^r_{p,q}B(\R))=S^r_{p,q}B(\R)_{\unif}
\ee
holds in the sense of equivalent norms.
\end{theorem}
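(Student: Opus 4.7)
The inclusion $M(S^r_{p,q}B(\R)) \hookrightarrow S^r_{p,q}B(\R)_{\unif}$ is Lemma \ref{uniform}, so the task is to prove the reverse. Given $g \in S^r_{p,q}B(\R)_{\unif}$ and $f \in S^r_{p,q}B(\R)$, I aim at
\[
\|gf\,|\,S^r_{p,q}B(\R)\| \lesssim \|g\,|\,S^r_{p,q}B(\R)_{\unif}\|\cdot \|f\,|\,S^r_{p,q}B(\R)\|.
\]
My strategy has two building blocks. First, a localized multiplier estimate: for every compactly supported $\phi \in S^r_{p,q}B(\R)$ with support in a cube of side $\lesssim 1$ and every $f \in S^r_{p,q}B(\R)$,
\[
\|\phi f\,|\,S^r_{p,q}B(\R)\| \lesssim \|\phi\,|\,S^r_{p,q}B(\R)\|\cdot \|f\,|\,S^r_{p,q}B(\R)_{\unif}\|,
\]
which I would prove directly using Proposition \ref{diff}, the multivariate Leibniz rule for mixed differences, the embedding $S^r_{p,q}B \hookrightarrow C(\R)$ from Lemma \ref{emb1} (valid since $r > 1/p$), and the compact support hypothesis (which in particular places $\phi$ in $L_p \cap L_\infty$). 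Second, a localization-type inequality for $S^r_{p,q}B(\R)$ of the shape
\[
\|F\,|\,S^r_{p,q}B(\R)\|^p \asymp \sum_{\mu \in \zz^d} \|\psi_\mu F\,|\,S^r_{p,q}B(\R)\|^p,
\]
or at least the one-sided analogue needed for our direction, to be established via the difference characterizations of Proposition \ref{diff} and Lemma \ref{red} combined with a careful finite-overlap argument.

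Given these two pieces, the proof proceeds as follows. Decompose $g = \sum_{\mu \in \zz^d} g_\mu$ with $g_\mu := g\,\psi_\mu$; by Lemma \ref{glatt} each $g_\mu \in S^r_{p,q}B(\R)$ with $\sup_\mu \|g_\mu\,|\,S^r_{p,q}B\| \lesssim \|g\,|\,S^r_{p,q}B_{\unif}\|$. Using the localization inequality applied to $F = gf$, together with $\psi_\nu gf = \sum_{|\mu-\nu|_\infty \le C} \psi_\nu g_\mu f$ (a sum with $O(1)$ nonzero terms in view of the supports of $\psi_\mu$ and $\psi_\nu$), one obtains
\[
\|gf\,|\,S^r_{p,q}B\|^p \lesssim \sum_\nu \bigg\|\sum_{\mu:\,|\mu-\nu|_\infty\le C}\, \psi_\nu g_\mu\, f\,\bigg|\,S^r_{p,q}B\bigg\|^p.
\]
The localized multiplier estimate applied to each compactly supported $\psi_\nu g_\mu$ controls the inner norm by $\|\psi_\nu g_\mu\,|\,S^r_{p,q}B\|\cdot \|\widetilde{\psi}_\nu f\,|\,S^r_{p,q}B\|$, where $\widetilde\psi_\nu$ is a slightly wider cutoff around $\nu$. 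Pulling $\sup_\mu \|g_\mu\,|\,S^r_{p,q}B\|$ outside, and applying the localization inequality in the reverse direction to $\widetilde\psi_\nu f$, yields the required bound.

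The main obstacle lies in establishing the localization inequality with the correct exponent in the dominating-mixed setting; in the isotropic case paramultiplication handles this effortlessly, but here the differences must be analyzed directionwise via the Leibniz expansion of $\Delta_h^{\bar m, e}(\psi_\mu F)$, controlling the $F$-factor by the Peetre maximal function (Lemmas \ref{ddim-1}, \ref{ddim-2}) at the appropriate scale and the $\psi_\mu$-factor by Lemma \ref{ddim-2}. The hypothesis $p \le q$ is decisive in the final assembly: it allows Minkowski's inequality in $\ell_{q/p}$, so that the spatial $\ell_p$-summation (arising from finite-overlap pointwise estimates) can be interchanged with the scale $\ell_q$-summation of the Besov norm, producing the favorable outer structure $\sup_\mu \|g_\mu\,|\,S^r_{p,q}B\|$ rather than a divergent $\sum_\mu$. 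This delicate $\ell_p$--$\ell_q$ bookkeeping, intertwining scales and spatial position, is the step I anticipate as the main technical hurdle.
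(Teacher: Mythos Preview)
Your overall architecture has a genuine gap at the final reassembly step. After localizing and using the algebra estimate, you arrive at
\[
\|gf\,|\,S^r_{p,q}B(\R)\|^p \lesssim \|g\,|\,S^r_{p,q}B(\R)_{\unif}\|^p \sum_\nu \|\widetilde\psi_\nu f\,|\,S^r_{p,q}B(\R)\|^p,
\]
and you then need $\sum_\nu \|\widetilde\psi_\nu f\,|\,S^r_{p,q}B(\R)\|^p \lesssim \|f\,|\,S^r_{p,q}B(\R)\|^p$. This direction of the $p$-localization property \emph{fails} for $p<q$. A one-dimensional obstruction already shows it: take $f_N(x)=\sum_{n=1}^N 2^{-nr}\cos(2^n x)\,\chi(x-n)$ with $\chi\in C_0^\infty((0,1))$. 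Each localized piece has $\|\psi_n f_N\,|\,B^r_{p,q}\|\asymp 1$, so the left-hand side is $\asymp N$; but the Littlewood--Paley blocks of $f_N$ are essentially the individual summands, whence $\|f_N\,|\,B^r_{p,q}\|\asymp N^{1/q}$ and the right-hand side is $\asymp N^{p/q}$. For $p<q$ the inequality blows up. This is exactly why Proposition~\ref{local} is stated only for $p=q$. (The \emph{other} direction, $\|F\|^p\lesssim \sum_\nu\|\psi_\nu F\|^p$, which you use first, does hold for $p\le q$ via Minkowski in $\ell_{q/p}$ after a pointwise finite-overlap bound on the differences --- but you need both.)

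The paper's proof (Proposition~\ref{q=inf}) avoids ever reconstituting an $S^r_{p,q}B$-norm of $f$ from an $\ell_p$-sum of local pieces. It works directly with $\|fg\,|\,S^r_{p,q}B\|_{(2m)}$, expands by the Leibniz rule, and in each term pushes the $\sum_\mu$ inside using Minkowski in $\ell_{q/p}$ (this is where $p\le q$ enters), then sums over $\mu$ \emph{at the level of the Littlewood--Paley building blocks} $f_{k+\ell}$ via the finite overlap of the cubes $Q_\mu$, recovering $\|f_{k+\ell}\,|\,L_p(\R)\|$ rather than a local Besov norm. Where a genuine localization is needed (Step~3 of that proof), it is invoked only for the \emph{diagonal} space $S^{r-\varepsilon}_{p,p}B$ --- where Proposition~\ref{local} applies --- combined with the embedding $S^r_{p,q}B\hookrightarrow S^{r-\varepsilon}_{p,p}B\hookrightarrow C$. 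Your reduction to ``local multiplier estimate plus $p$-localization'' is too coarse to survive the $\ell_p$--$\ell_q$ mismatch; the scale-by-scale bookkeeping on the Fourier side cannot be bypassed.
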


\begin{remark}
\rm
{\rm (i)} 
In proving the characterization in \eqref{ws-02} we partly follow  the same strategy as in case of Theorem \ref{main-be}.
However, the proof is much more sophisticated than the proof of Theorem \ref{main-be}.
\\
{\rm (ii)} In case $p=q$ the result \eqref{ws-02} has been proved in \cite{KS16}.
\\
{\rm (iii)} The isotropic counterpart of Theorem \ref{mul-spaceb}, namely the identity
\beqq
M(B^s_{p,q}(\R))=B^s_{p,q}(\R)_{\unif}\, , \qquad 1  \le p \leq q \leq \infty, \quad s>d/p, 
\eeqq
has been known for some years in the special case $p=q$, we refer to  
Strichartz \cite{Str-67} ($p=q=2$), Peetre \cite{Pe}, page 151, ($1 \le p=q \le \infty $), Maz'ya and Shaposnikova, see 
\cite[Theorems 4.1.1, 5.3.1, 5.3.2, 5.4.1]{MS2}, ($1 \le  p=q < \infty $).
S. \cite{Si} ($1\le p=q < \infty$) and    Triebel \cite[Proposition 2.22]{Tr06}.
The case $p < q$ has been proved  for the first time   in S. and Smirnov \cite{SS}. 
Quite recently a different proof has been  given by  the authors \cite{KS16b}.
\end{remark}

By using duality arguments one can derive from Theorem \ref{mul-spaceb} the following.

\begin{corollary}\label{mul-spacec}
 Let $1   < q \le p< \infty$ and $r< \frac 1p -1$. Then
\be\label{ws-127}
M(S^r_{p,q}B(\R))=S^{-r}_{p',q'}B(\R)_{\unif}
\ee
holds in the sense of equivalent norms.
\end{corollary}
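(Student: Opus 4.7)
The plan is to derive Corollary \ref{mul-spacec} from Theorem \ref{mul-spaceb} by transposition. First I would observe that under the hypotheses $1 < q \le p < \infty$ and $r < 1/p - 1$, the conjugate triple $(p',q',-r)$ satisfies $1 \le p' \le q' \le \infty$ (since $q \le p$ reverses under conjugation) and $-r > 1/p'$ (rewriting $r < 1/p-1 = -1/p'$). Hence $(p',q',-r)$ lies exactly in the range covered by Theorem \ref{mul-spaceb}, which will give
\[
M\bigl(S^{-r}_{p',q'}B(\R)\bigr) = S^{-r}_{p',q'}B(\R)_{\unif}.
\]
The remaining task is therefore to establish the identity $M(S^r_{p,q}B(\R)) = M(S^{-r}_{p',q'}B(\R))$ with equivalent norms.

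\textbf{Step 1 (duality setup).} I would invoke the classical duality
\[
\bigl(S^r_{p,q}B(\R)\bigr)' = S^{-r}_{p',q'}B(\R),
\]
available for $1 < p,q < \infty$ and realised through the extension of the $L_2$-pairing; see the monographs \cite{ST}, \cite{Am}. In this regime $\cs(\R)$ is dense in both spaces, which will be essential for what follows.

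\textbf{Step 2 (transfer of the multiplier property via adjoints).} For $g \in M(S^r_{p,q}B(\R))$, the operator $T_g : f \mapsto g \cdot f$ is bounded on $S^r_{p,q}B(\R)$, and its Banach adjoint $T_g^{\ast}$ is bounded on $S^{-r}_{p',q'}B(\R)$ with $\|T_g^{\ast}\| = \|T_g\|$. I would then identify $T_g^{\ast}$ as pointwise multiplication by the same $g$ by computing on Schwartz functions: for $\varphi,\psi \in \cs(\R)$,
\[
\langle T_g^{\ast}\varphi, \psi \rangle \;=\; \langle \varphi, g \cdot \psi \rangle \;=\; \int_{\R} g(x)\,\varphi(x)\,\psi(x)\,dx \;=\; \langle g \cdot \varphi, \psi \rangle.
\]
Density of $\cs(\R)$ in $S^{-r}_{p',q'}B(\R)$ then promotes this equality to all of $S^{-r}_{p',q'}B(\R)$, yielding $g \in M(S^{-r}_{p',q'}B(\R))$ with equivalent norm. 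The reverse inclusion is obtained symmetrically, applying the same argument with the roles of $X$ and $X'$ exchanged (permitted because both spaces are reflexive under our indices).

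\textbf{Step 3 (combination).} Combining Steps 1--2 with Theorem \ref{mul-spaceb} applied to the triple $(p',q',-r)$ produces \eqref{ws-127}.

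\textbf{Expected main obstacle.} The delicate point is Step 2, because for $r < 0$ the elements of $S^r_{p,q}B(\R)$ are genuine tempered distributions, so $g \cdot f$ cannot be read as a literal pointwise product and must be interpreted via duality. The computation above, carried out on the dense subspace $\cs(\R)$ and then transported by continuity, is precisely what legitimises this interpretation and keeps the two multiplier spaces in bijective correspondence. A further technical check is that multiplication by $g$ is a bona fide operation on $\cs(\R)$ in the first place: since $-r > 1/p'$, Lemma \ref{emb1} guarantees $S^{-r}_{p',q'}B(\R)_{\unif} \hookrightarrow C(\R)$, so every admissible $g$ is a bounded continuous function and $g \cdot \varphi$ is unambiguously defined for $\varphi \in \cs(\R)$. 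Once these identifications are in place, the three steps chain together without further analytic input.
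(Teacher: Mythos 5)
Your proposal is correct and follows essentially the same route as the paper: the authors also obtain the corollary by combining Theorem \ref{mul-spaceb} (applied to the conjugate triple $(p',q',-r)$, which your index check correctly places in its range) with the duality/adjoint argument $T_g^{\ast}=T_{\bar g}$ based on Proposition \ref{dual2}, exactly as in Step 2 of their proof of Lemma \ref{glatt}. Your write-up merely makes explicit the identification of the adjoint on the dense subspace $\cs(\R)$, which the paper leaves to the reader.
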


In the isotropic case it is well-known that 
Theorem \ref{main-be} can be improved in the following way.
Let $1 \le p,q\leq \infty$ and $s >0$. Then $B^s_{p,q}(\R) \cap L_\infty (\R)$ is a multiplication algebra and 
there exists a constant $C$ such that  
\[
\|\, f\, \cdot \, g\, | B^s_{p,q}(\R)\| \leq C\, \big(\|\, f\, |B^s_{p,q}(\R)\| \, \|\, g\, |L_{\infty}(\R)\| + 
\|\, f\, |L_{\infty}(\R)\|\, \|\, g\, |B^s_{p,q}(\R)\|\big)
\]
holds for all $f,g\in B^s_{p,q}(\R)$.
Inequalities of this type are sometimes called Moser inequalities.
In the dominating mixed case those Moser-type inequalities are not true.

\begin{theorem}\label{negative}
Let $d>1$, $1\le  p,q\leq \infty$ and $r>0$. Then there exists no constant $C>0$ such that
\beqq
\|\, f\cdot g\, | S^r_{p,q}B(\R)\| \leq C\, \big(\|\, f\, |S^r_{p,q}B(\R)\|\, \|\, g\, |L_{\infty}(\R)\| + 
\|\, f\, |L_{\infty}(\R)\|\, \|\, g\, |S^r_{p,q}B(\R)\|\big)
\eeqq
holds for all $f,g\in S^r_{p,q}B(\R) \cap L_\infty(\R)$.
\end{theorem}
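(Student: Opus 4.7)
The strategy is to construct, for every $N\in\N$, explicit tensor--product counterexamples $f_N,g_N\in S^r_{p,q}B(\R)\cap L_\infty(\R)$ whose individual mixed--smoothness norms grow like $2^{Nr}$ and whose $L_\infty$--norms remain bounded independently of $N$, but whose pointwise product has mixed--smoothness norm of order $2^{2Nr}$. The right--hand side of the conjectured Moser--type inequality is then $O(2^{Nr})$ while the left--hand side is $\asymp 2^{2Nr}$, ruling out any finite constant $C$.

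I begin by preparing the one--dimensional ingredient. Fix two bump functions $\psi,\tilde\psi\in C_0^\infty(\re)$ with $\tilde\psi\equiv 1$ on $\supp\psi$ and set
\[
u_N(t):=\sin(2^N t)\,\psi(t),\qquad t\in\re.
\]
Because $\cf u_N(\xi)=\tfrac{1}{2i}\bigl[\cf\psi(\xi-2^N)-\cf\psi(\xi+2^N)\bigr]$ is essentially localized near $\pm 2^N$, the Fourier analytic norm defining $B^r_{p,q}(\re)$ is dominated by its $N$--th dyadic block: choosing the generator $\varphi_0$ so that $\varphi_N\equiv 1$ on a neighbourhood of the effective support of $\cf u_N$, that block reproduces $u_N$ up to an error controlled by the Schwartz decay of $\cf\psi$, and all remaining blocks decay super--polynomially in $|k-N|$. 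I therefore obtain
\[
\|\,u_N\,|\,B^r_{p,q}(\re)\|\asymp 2^{Nr},\qquad \|u_N\|_\infty\le\|\psi\|_\infty,
\]
with constants independent of $N$.

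Next I assemble the multivariate counterexamples. Put
\[
f_N(x):=u_N(x_1)\,\tilde\psi(x_2)\cdots\tilde\psi(x_d),\qquad g_N(x):=\tilde\psi(x_1)\,u_N(x_2)\,\tilde\psi(x_3)\cdots\tilde\psi(x_d).
\]
Since $\tilde\psi\, u_N=u_N$, the pointwise product simplifies to
\[
f_N(x)\,g_N(x)=u_N(x_1)\,u_N(x_2)\,\tilde\psi(x_3)^2\cdots\tilde\psi(x_d)^2.
\]
By the cross--norm identity stated in Remark \ref{blabla} I read off
\[
\|f_N\,|\,S^r_{p,q}B(\R)\|\asymp\|g_N\,|\,S^r_{p,q}B(\R)\|\asymp 2^{Nr}, \qquad \|f_N g_N\,|\,S^r_{p,q}B(\R)\|\asymp 2^{2Nr},
\]
while the analogous product formula for the sup--norm yields $\|f_N\|_\infty,\,\|g_N\|_\infty\le C$. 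Plugging these into the conjectured inequality and letting $N\to\infty$ produces the required contradiction.

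The only delicate ingredient is the lower bound $\|u_N\,|\,B^r_{p,q}(\re)\|\gtrsim 2^{Nr}$; everything else is bookkeeping based on the cross--norm property and the trivial identity $\tilde\psi u_N=u_N$. The heuristic reason the construction has no isotropic analogue is precisely that the $B^r_{p,q}(\R)$--norm is not multiplicative across coordinates: placing the oscillation of $f_N$ and $g_N$ in two different directions doubles the exponent in $f_N g_N$ but only adds it once in each factor separately, and this doubling can be detected only by a tensorial norm such as that of $S^r_{p,q}B(\R)$.
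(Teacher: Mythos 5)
Your construction is correct and is precisely the kind of counterexample the paper invokes: its own proof of Theorem \ref{negative} consists of a single reference to the tensor-product counterexamples of \cite{KS16}, which, as you do, place a high-frequency oscillation $\sin(2^Nt)\,\psi(t)$ into two different coordinate directions and exploit the cross-norm of Remark \ref{blabla}, so that the product norm scales like $2^{2Nr}$ while each factor and the $L_\infty$-norms contribute only $2^{Nr}$ and $O(1)$ respectively. The one step you rightly flag as delicate, the lower bound $\|\,u_N\,|B^r_{p,q}(\re)\|\gtrsim 2^{Nr}$, is standard (the blocks $\cfi[\varphi_j\cf u_N]$ with $|j-N|\le 1$ reconstruct $u_N$ up to a rapidly decaying remainder, so at least one of them has $L_p$-norm bounded from below uniformly in $N$), and your examples are compactly supported, which is exactly what the paper later needs to deduce Theorem \ref{negativec}.
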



\subsection{Pointwise multipliers and algebras - the local case}


As a service for the reader we investigate the local situation as well, i.e., we consider 
 pointwise multipliers  for Besov spaces  of dominating mixed smoothness defined on the cube   $\Omega:= [0,1]^d$. 
For convenience  we introduce the  spaces under consideration by taking restrictions.

\begin{definition} 
Let $1\le  p,q\leq \infty$ and $r \in \re$. Then
      $S^{r}_{p,q}B(\Omega)$ is the space of all $f\in D'(\Omega)$ such that there exists   $g\in
      S^{r}_{p, q }B(\R)$ satisfying $f = g|_{\Omega}$. It is endowed with the quotient norm
      $$
         \|\, f \, |S^{r}_{p,q}B(\Omega)\| = \inf \Big\{ \|g|S^{r}_{p,q}B(\R)\|~:~ g|_{\Omega} =
         f \Big\}\,.
      $$
\end{definition}

Our main results as listed in the previous subsection carry over to the local case.

\begin{theorem}\label{main-be-1}
Let $1\le  p,q\leq \infty$ and $r\in \re$. Then $S^r_{p,q}B(\Omega)$ is a multiplication algebra if and only if 
\begin{itemize}
\item either  $r>1/p$
\item or $1\leq p < \infty$, $r=1/p$ and $q = 1$.
\end{itemize} 
\end{theorem}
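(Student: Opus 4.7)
The plan is to reduce Theorem \ref{main-be-1} to the global counterpart Theorem \ref{main-be} by exploiting the extension/restriction definition of $S^r_{p,q}B(\Omega)$ together with the smooth-cutoff multiplier property supplied by Lemma \ref{glatt}. For the sufficiency, assume $r > 1/p$, or the endpoint case $r = 1/p$, $q=1$, $p<\infty$. Given $f,g \in S^r_{p,q}B(\Omega)$, I would choose extensions $\tilde f, \tilde g \in S^r_{p,q}B(\R)$ whose global norms are at most twice the corresponding local quotient norms, and fix a cutoff $\varphi \in C_0^\infty(\R)$ with $\varphi \equiv 1$ on $\Omega$. Lemma \ref{glatt} bounds $\|\varphi \tilde f \, | \, S^r_{p,q}B(\R)\|$ and $\|\varphi \tilde g\, |\, S^r_{p,q}B(\R)\|$ by constant multiples of $\|f\, |\, S^r_{p,q}B(\Omega)\|$ and $\|g\, |\, S^r_{p,q}B(\Omega)\|$ respectively. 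Applying Theorem \ref{main-be} to the compactly supported pair $\varphi \tilde f, \varphi \tilde g$ produces $\varphi^2 \tilde f \tilde g \in S^r_{p,q}B(\R)$ with the product-norm estimate, and this distribution restricts to $fg$ on $\Omega$ since $\varphi^2 \equiv 1$ there. Taking the infimum in the definition of the quotient norm yields the desired algebra inequality.

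For the necessity I would first establish the auxiliary equivalence
\[
\|f\, |\, S^r_{p,q}B(\R)\| \asymp \|f\, |\, S^r_{p,q}B(\Omega)\|
\]
for every distribution $f \in S^r_{p,q}B(\R)$ supported in a fixed compact set $K \subset \mathrm{int}(\Omega)$. The bound $\gtrsim$ is immediate because $f$ is an extension of itself. For $\lesssim$, fix $\psi \in C_0^\infty(\mathrm{int}\,\Omega)$ with $\psi \equiv 1$ on $K$; then any extension $g$ of $f|_\Omega$ satisfies $\psi g = f$ on all of $\R$, and Lemma \ref{glatt} gives $\|f\, |\, S^r_{p,q}B(\R)\| = \|\psi g \, |\, S^r_{p,q}B(\R)\| \lesssim \|g \, |\, S^r_{p,q}B(\R)\|$; taking the infimum over $g$ closes the equivalence. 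As a consequence, any sequence of counterexamples $(f_n,g_n)$ witnessing the failure of the algebra property in $S^r_{p,q}B(\R)$ that can be arranged to sit inside $K$ automatically transfers to $S^r_{p,q}B(\Omega)$, so whenever $(p,q,r)$ lies outside the admissible range of Theorem \ref{main-be} the same is true for $S^r_{p,q}B(\Omega)$.

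The sufficiency direction is essentially routine once Lemma \ref{glatt} and Theorem \ref{main-be} are at hand. The one point that needs genuine verification is that the counterexamples underlying the negative part of Theorem \ref{main-be} can be localised inside $\mathrm{int}(\Omega)$ without spoiling the blow-up of the ratio $\|f_n g_n\|/(\|f_n\|\,\|g_n\|)$. In the Besov setting such counterexamples are typically built from a single bump, rescaled and translated, so after composing with a translation and a dilation they already sit in $K$; this is a scaling-and-truncation check rather than a new analytic estimate, but it is the only step in the reduction that is not completely formal.
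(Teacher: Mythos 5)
Your sufficiency argument (extend with controlled norm, cut off with a fixed $\varphi\in C_0^\infty(\R)$ equal to $1$ near $\Omega$, apply the global Theorem~\ref{main-be}, restrict back) is exactly what the paper means by ``the \emph{if}-part is obvious,'' and your auxiliary equivalence $\|f\,|\,S^r_{p,q}B(\R)\|\asymp\|f\,|\,S^r_{p,q}B(\Omega)\|$ for distributions supported in a compact $K\subset\mathrm{int}\,\Omega$ is correct as you prove it (any extension $g$ of $f|_\Omega$ satisfies $\psi g=\psi f=f$, then Lemma~\ref{glatt}).

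The necessity is where your plan has a genuine gap, and it is precisely the step you flag as ``a scaling-and-truncation check.'' You propose to transfer ``the counterexamples underlying the negative part of Theorem~\ref{main-be}'' into $K$. But the paper never constructs such counterexamples for the dominating mixed scale: its proof of necessity in Theorem~\ref{main-be} is itself a reduction, via tensor products $f(x_1)\prod_{i=2}^d\psi(x_i)$ and the cross-norm property of Remark~\ref{blabla}, to the statement that $B^r_{p,q}(\re)$ must be a univariate algebra, after which known one-dimensional results are cited. So the objects your argument needs do not exist anywhere in the paper, and your heuristic that they are ``built from a single bump, rescaled and translated'' is an assertion about isotropic Besov counterexamples, not a verification for $S^r_{p,q}B$. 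The paper's route for Theorem~\ref{main-be-1} avoids this entirely by arguing in the contrapositive: assuming the local algebra inequality, the same tensorization (now with all factors, including $f$ and $g$, compactly supported in $(0,1)$) yields the algebra inequality for $B^r_{p,q}(\re)$ on smooth functions supported in $(0,1)$; by the argument of Triebel's Theorem~2.6.2/1 this forces $\mathring{B}^r_{p,q}([0,1])\hookrightarrow C([0,1])$, which is known to be equivalent to the stated restrictions on $(p,q,r)$. Your gap is closable --- for instance by tensorizing compactly supported one-dimensional counterexamples with fixed bumps and invoking the cross-norm, which is the same mechanism run forwards --- but as written the decisive analytic content of the necessity proof is missing rather than merely routine.
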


In the local case  Theorem \ref{main-be-1} can be immediately turned into a satisfactory  characterization of 
 $M(S^r_{p,q}B(\Omega))$.

\begin{theorem}\label{mul-spacew}
Let $1\le  p,q\leq \infty$ and $r>1/p$. Then
\[
M(S^r_{p,q}B(\Omega))=S^r_{p,q}B(\Omega)
\]
holds in the sense of equivalent norms.
\end{theorem}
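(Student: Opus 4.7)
The plan is to prove the asserted equality by establishing two continuous embeddings. The direction
$$
S^r_{p,q}B(\Omega) \hookrightarrow M(S^r_{p,q}B(\Omega))
$$
is an immediate consequence of Theorem \ref{main-be-1}: the hypothesis $r>1/p$ places us inside the algebra range, so for every pair $f,g \in S^r_{p,q}B(\Omega)$ one has
$$
\| g\cdot f \,|\, S^r_{p,q}B(\Omega)\| \le C\, \| g\,|\, S^r_{p,q}B(\Omega)\| \cdot \| f\,|\, S^r_{p,q}B(\Omega)\|,
$$
and taking the supremum over $f$ in the unit ball of $S^r_{p,q}B(\Omega)$ shows that $g$ acts as a bounded multiplier with $\|g\,|\,M(S^r_{p,q}B(\Omega))\| \le C\,\|g\,|\,S^r_{p,q}B(\Omega)\|$.

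For the converse embedding $M(S^r_{p,q}B(\Omega)) \hookrightarrow S^r_{p,q}B(\Omega)$, the key observation is that the boundedness of $\Omega$ makes the constant function $\mathbf{1}$ an element of $S^r_{p,q}B(\Omega)$. To see this, pick any $\phi \in C_0^\infty(\R)$ with $\phi \equiv 1$ on $\Omega$; by Lemma \ref{glatt} (or directly from the Fourier-analytic norm applied to a Schwartz function) $\phi \in S^r_{p,q}B(\R)$, and the defining quotient norm yields $\|\mathbf{1}\,|\,S^r_{p,q}B(\Omega)\| \le \|\phi\,|\, S^r_{p,q}B(\R)\| < \infty$. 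Since $r>1/p$, Lemma \ref{emb1} makes every element of $S^r_{p,q}B(\Omega)$ an honest continuous function, so pointwise multiplication has a clean functional meaning. For any $g \in M(S^r_{p,q}B(\Omega))$ the associated operator $T_g$ is bounded on $S^r_{p,q}B(\Omega)$ (Closed Graph Theorem, as recalled at the start of Section \ref{main}), and applying $T_g$ to $\mathbf{1}$ yields
$$
g = T_g(\mathbf{1}) \in S^r_{p,q}B(\Omega), \qquad \|g\,|\,S^r_{p,q}B(\Omega)\| \le \|\mathbf{1}\,|\,S^r_{p,q}B(\Omega)\|\cdot \|g\,|\,M(S^r_{p,q}B(\Omega))\|.
$$

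I do not expect any serious technical obstacle. All of the substantive analytic work is absorbed into Theorem \ref{main-be-1}, whose proof in turn relies on the delicate difference and Peetre-maximal-function machinery summarised in Section \ref{def}. The reason the global result (Theorem \ref{mul-spaceb}) had to involve the uniform space $S^r_{p,q}B(\R)_{\unif}$, rather than the space itself, was precisely that the constant function $\mathbf{1}$ fails to lie in $S^r_{p,q}B(\R)$; on the bounded cube $\Omega$ that obstacle disappears, and the multiplier space coincides with the space itself without any uniform localisation being necessary.
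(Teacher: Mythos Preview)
Your proposal is correct and follows essentially the same route as the paper: sufficiency via the algebra property (the paper cites Theorem~\ref{main-be}, you cite its local version Theorem~\ref{main-be-1}, which amounts to the same thing), and necessity by testing the multiplier on the constant function $\mathbf{1}\in S^r_{p,q}B(\Omega)$. Your write-up is slightly more detailed than the paper's (you spell out why $\mathbf{1}$ lies in the space and mention the Closed Graph Theorem), but the argument is identical in substance.
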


Also in the local situation a Moser-type inequality does not hold.

\begin{theorem}\label{negativec}
Let $d>1$, 
$1\le p,q\leq \infty$ and $r>0$. Then there exists no constant $C>0$ such that
\beqq
\|f\, \cdot \, g| S^r_{p,q}B(\Omega)\| \leq C\, \big(\|\,f\,|S^r_{p,q}B(\Omega)\|\, \|\, g\, |L_{\infty}(\Omega)\| 
+ \|\, f\, |L_{\infty}(\Omega)\|\, \|\, g\, |S^r_{p,q}B(\Omega)\|\big)
\eeqq
holds for all $f,g\in S^r_{p,q}B(\Omega) \cap L_\infty (\Omega)$.
\end{theorem}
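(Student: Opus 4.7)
My strategy is to deduce Theorem~\ref{negativec} from the global counterpart Theorem~\ref{negative} via a localization principle: for a function $F$ supported in a fixed compact subset of the open cube $(0,1)^d$, the $\Omega$-norm and the $\R$-norm in $S^r_{p,q}B$ are equivalent up to multiplicative constants depending only on that compact set. This allows me to transfer any counterexample for the global Moser-type inequality directly to the local setting.

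The first step is to prove this localization lemma. Fix a compact set $K \subset (0,1)^d$ and choose $\chi \in C_0^\infty(\R)$ with $\supp \chi \subset (0,1)^d$ and $\chi \equiv 1$ on a neighborhood of $K$. Assume $F \in S^r_{p,q}B(\R)$ has $\supp F \subset K$. The quotient definition of $\|\,\cdot\,|S^r_{p,q}B(\Omega)\|$ gives the upper bound $\|F|_{\Omega}\,|\,S^r_{p,q}B(\Omega)\| \le \|F|S^r_{p,q}B(\R)\|$ by using $F$ itself as the extension. For the reverse inequality, let $g \in S^r_{p,q}B(\R)$ be any extension of $F|_\Omega$. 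Then $\supp(g-F) \subset \R \setminus \Omega$, and since $\supp \chi \subset \Omega$, we get $\chi(g-F)=0$, i.e.\ $\chi g = \chi F = F$ (the last equality because $\chi \equiv 1$ on $\supp F$). Lemma~\ref{glatt} yields
\[
\|F|S^r_{p,q}B(\R)\| = \|\chi g\,|\,S^r_{p,q}B(\R)\| \le C_\chi\,\|g|S^r_{p,q}B(\R)\|,
\]
and taking the infimum over $g$ proves the matching lower bound. Clearly $\|F|L_\infty(\R)\| = \|F|_\Omega\,|\,L_\infty(\Omega)\|$ as well.

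The second step is to invoke Theorem~\ref{negative}. A natural counterexample producing its blow-up is based on high-frequency modulations of tensor products of $C_0^\infty(\re)$ bumps -- e.g.\ pairs of the form $f_n(x) = \phi(x_1)e^{i2^n x_1}\,\psi(x_2)\cdots\psi(x_d)$ together with a similar $g_n$ carrying the modulation in a different coordinate, so that by the cross-norm property $\|f_n|S^r_{p,q}B(\R)\|\asymp 2^{nr}$ while $\|f_n|L_\infty(\R)\|$ stays bounded, and $f_n g_n$ acquires the \emph{full} tensor-product smoothness $\asymp 2^{2nr}$. By an initial translation and rescaling one arranges $\supp f_n, \supp g_n \subset K$ for a single compact $K\subset (0,1)^d$; if the construction produces non-compactly-supported counterexamples, multiplication by a fixed $\eta\in C_0^\infty((0,1)^d)$ that equals $1$ on the region carrying the blow-up perturbs both numerator and denominator of the Moser ratio by $O(1)$ factors, thanks to Lemma~\ref{glatt}.

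Finally, the localization lemma applied simultaneously to $f_n$, $g_n$, and $f_n g_n$ shows that the global and local Moser ratios differ only by a factor $O(C_\chi^3)$ along the sequence. Thus, were the local inequality to hold with some constant $C$, it would yield a global inequality with constant $O(C\,C_\chi^3)$ for the sequence $(f_n,g_n)$, contradicting Theorem~\ref{negative}. The main obstacle in carrying this out is the second step: one has to ensure that whatever counterexample is used for Theorem~\ref{negative} survives compactification inside $(0,1)^d$ without losing the asymptotic blow-up. For the frequency-modulated tensor-product construction sketched above this is automatic, because multiplication by a fixed smooth cutoff neither affects $L_\infty$ boundedness nor destroys the high-frequency content captured by the $S^r_{p,q}B$ norm.
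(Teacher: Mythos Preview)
Your proposal is correct and follows essentially the same route as the paper: the paper's entire proof is the single sentence ``It is enough to observe that the used counterexamples in the proof of Theorem~\ref{negative} have compact support,'' and you have simply written out in detail why that observation suffices (your localization lemma via Lemma~\ref{glatt}) together with an explicit sketch of a compactly supported counterexample family of the tensor-product, frequency-modulated type. The only difference is that the paper defers the concrete counterexample to \cite{KS16}, whereas you sketch one yourself; your sketch is the standard one and is correct.
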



\section{Proofs}\label{proofs}


All proofs are collected in this section.
We postpone the proof of Lemma \ref{glatt} and Lemma \ref{uniform} to the Subsection \ref{proofs-Teil1}.


\subsection{Proof of the algebra property}


\vskip 0.3cm
\noindent
{\bf Proof of Theorem \ref{main-be}}. {\it Step 1. } Let $r <m\leq r+1$. Since the norm  $\|\cdot\,|\,S^r_{p,q}B(\R)\|_{(m)}$ 
does not depend on $m>r$ in the sense of equivalent norms, see Lemma \ref{diff1}, we shall prove that  
\beqq
     \|\, f\, \cdot \, g\,|\,S^r_{p,q} B(\R)\|_{(2m)} \leq C \,  \| \, f\, |S^r_{p,q}B(\R) \| \,  \|\,  g \, |S^r_{p,q}B(\R) \|
\eeqq 
holds for all $f,g\in S^r_{p,q}B(\R)$. Taking into account Lemma \ref{emb1} we obtain
\beqq
\| f\, \cdot \, g\,|L_p(\R)\| \, \leq\, \| f|L_p(\R)\|\cdot \| g|C(\R)\|\, \leq\,c\, \| f|S^r_{p,q}B(\R)\|\cdot \|g|S^r_{p,q}B(\R)\|.
\eeqq  
This inequality should  be interpreted as the estimate needed for the term with $e= \emptyset$.
Next we need some identities for differences.
Note that if $\psi,\, \phi:\ \re \to \C$ and $m \in \N$ we have
\be\label{formular}
\Delta_h^{m}(\psi \phi)(x)=\sum_{j=0}^{m} \binom{m}{j} \, \Delta_h^{m-j}\psi(x+j h)\, \Delta_h^{j}\phi(x),\qquad x,h\in \re \, ,
\ee 
which can be proved by induction on $m$.
Let $e\subset [d]$, $e\not=\emptyset$ and recall the notation $e_0=[d]\backslash e$,
\[
x \diamond y = (x_1\, \cdot \,y_1, \ldots \,, x_d\, \cdot   y_d)\in \R\, 
\]
and
\[
\N_0^d(e) = \big\{k\in \N_0^d: \ k_i=0\ \text{if}\ i\not \in e\big\}.
\]
Then we derive from \eqref{formular} that
\be\label{mot}
\Delta_{h}^{2{\bar{m}},e}(f\, \cdot \,  g)(x)=\sum_{ u \in \N_0^d(e),\,|u|_\infty\leq 2m} \binom{2\bar{m}}{u}\, 
\Delta_{h}^{2 \bar{m} - u ,e}f(x+ u  \diamond h)\, \Delta_{h}^{ u ,e}g(x)\,,\quad\, x, h\in \R\, , 
\ee 
holds. Here $2\bar{m}-u :=(2m-u_1, \ldots ,2m-u_d)$ and 
\[
\binom{2\bar{m}}{u} = \prod_{i \in e} \binom{2m}{u_i} \, .
\]
The main step of the proof will consist  in the  estimates of the terms
\be\label{ws-117}
S_{e,u} :=\Bigg\{\sum\limits_{k \in \N_0^d(e)} 2^{r|k|_1 q}\bigg(\sup_{|h_i| < 2^{-k_i}, i \in e} 
\big\| \Delta_{h}^{2 \bar{m} - u ,e}f(\cdot+ u  \diamond h)\Delta_{h}^{ u ,e}g(\cdot)|L_p(\R)\big\|\bigg)^q\Bigg\}^{1/q}
\ee 
$e \not= \emptyset$, $u \in \N_0^d(e)$, $|u|_\infty \le 2m$.
Therefore we have to  consider different cases.
\\
{\it Step 2.} The case $u_i \le m$ for all $i\in e$. Obviously we have  $2m-u_i \ge m $ for all $i \in e$. 
Using a change of variables in the $L_p$-integral in the second step we obtain for a certain constant $c_1$
\beqq
 \big\| \Delta_{h}^{2\bar{m} -u ,e}f(\cdot+u  \diamond h)\Delta_{h}^{u ,e}g(\cdot)\big|L_p(\R)\big\|  
 &\le &   \big\| \Delta_{h}^{2\bar{m} -u ,e}f(\cdot+u  \diamond h)\big|L_p(\R)\big\|\,  \sup_{x\in \R} |\Delta_{h}^{u ,e}g(x)|\\
& \le &  c_1 \, \| g |C(\R)\|\,  \big\| \Delta_{h}^{\bar{m} ,e}f(\cdot )\big|L_p(\R)\big\|\,. 
\eeqq
The embedding $ S^r_{p,q}B(\R) \hookrightarrow C(\R)$, see Lemma \ref{emb1},    implies 
\beqq
\sup_{ |h_i| < 2^{-k_i}, i \in e} \big\| \Delta_{h}^{2\bar{m} -u ,e} f(\cdot+u  \diamond h)\Delta_{h}^{u ,e} \, g(\cdot)\, \big|L_p(\R)\big\| 
& \le &   c_1\,  \big\|g |C(\R)\big\| \,  \omega_{\bar{m}}^{e}(f,2^{-k })_p   
\\
&\le &  c_2\, \big\|g |S^r_{p,q}B(\R)\big\|\,   \omega_{\bar{m}}^{e}(f,2^{-k })_p  
\eeqq
with an appropriate constant $c_2$. Consequently we have 
\beqq
S_{e,u}
& \le &  c_2 \, \big\|g \big|S^r_{p,q}B(\R)\big\|  \, \bigg(\sum\limits_{k  \in \N_0^d(e)} 2^{r|k |_1 q}\, \omega_{\bar{m}}^{e}(f,2^{-k })^{q}_p\bigg)^{1/q}   
\\
& \le & c_2\, \big\|g \big|S^r_{p,q}B(\R)\big\|\,  \big\|f \big|S^r_{p,q}B(\R)\big\|\, .
\eeqq
The case $u_i\geq m$ for all $ i\in e$ can be handled in the same way by interchanging the roles of $f$ and $g$.
\\
{\it Step 3.} The remaining cases.  Without loss of generality we may assume that $e=\{1, \ldots ,N\}$ for some natural number $N$, $N \le d$. 
In addition we assume 
\[
u=(u_{1}, \ldots ,u_L,u_{L+1}, \ldots ,u_N,0,\ldots,0)
\] 
with
$$ m\leq u_i\leq 2m,\quad i=1, \ldots ,L,\qquad 0\leq u_i<m, \quad i= L+1, \ldots ,N$$
and $1\leq L\le N$ and $L< d$. 
For brevity we put
\[
e_1:= \{L+1,\ldots,N\} \qquad \mbox{ and } \qquad  e_2:=\{1,\ldots,L \}\, . 
\]
By assumption both sets are nontrivial.
This covers all remaining cases up to an enumeration.
\\
{\it Substep 3.1.} Let $1\leq p,q\leq \infty$ and $r>1/p$. Obviously it holds  $\N_0^d(e)=\N_0^d(e_1)\cup \N_0^d(e_2)$. 
Any  $k\in \N_0^d(e)$ can be written as $k=k^1+k^2$ with $k^1\in \N_0^d(e_1)$ and $k^2\in \N_0^d(e_2)$ in an unique way. 
Next we apply  the tensor product system $(\varphi_{j})_{j\in \N_0^d}$, defined in \eqref{ws-101}.  
We shall use the convention that in the univariate case $\varphi_n \equiv 0$ if $n< 0$, which implies that 
$\varphi_{(j_1, \ldots  , j_d)} \equiv 0$ if $\min_i j_i < 0$. For any $k \in \N_0^d$
this yields 
\[
 f(x) = \sum_{ \ell \in \zz^{d}}  \gf^{-1} [ \varphi_{k+ \ell } \gf f](x)
\]
with convergence in $ S^r_{p,q}B(\re^d)$ and therefore in $C(\re^d)$, see Lemma \ref{emb1}. 
In particular, we have the decompositions
\beqq
f(x) = \sum_{{\ell} \in \zz^d}  \gf^{-1}  [\varphi_{k^1 + \ell } \gf f](x)  \qquad 
\text{and}\qquad 
g(x)=\sum_{ {\nu} \in \zz^d}  \gf^{-1}  [\varphi_{k^2 + \nu}  \gf g](x)\, , \qquad x \in \R\, , 
\eeqq 
with convergence in $C(\R)$. To simplify notation we put
\[
f_{{\ell} } : =\gf^{-1} [\varphi_{ {\ell} } \gf f] \qquad \mbox{and} \qquad g_{{\ell} } : =\gf^{-1}  [\varphi_{{\ell} } \gf g]\, , \qquad 
\ell \in \zz^d\, .
\]  
An application of the triangle inequality leads to
 \beqq
 \big\| \Delta_{h}^{2 \bar{m} - u ,e}f(\cdot+ u  \diamond h)&& \hspace{-0.7cm} 
 \Delta_{h}^{ u ,e}g(\cdot)\,  |L_p(\R)\big\|
\nonumber
\\
&\le &  \sum_{  \ell , \nu \in \zz^d}  \big\| \Delta_{h}^{2 \bar{m} - u ,e}f_{k^1 +\ell }(\cdot+ u  \diamond h)
\Delta_{h}^{ u ,e}g_{k^2+\nu }(\cdot  ) \, |L_p(\R)\big\|\,.
\eeqq
We will estimate the sum on the right-hand side term by term.
It follows
\beqq
&& \hspace{-0.7cm}\big\| \Delta_{h}^{2 \bar{m} - u ,e}f_{k^1 +\ell }(\cdot+ u  \diamond h)
 \Delta_{h}^{ u ,e}g_{k^2+\nu }(\cdot  )\,  |L_p(\R)\big\|
\\
& \le &  \bigg(\int\limits_{\re^{d-L}} \sup_{\substack{x_i\in \re\\i \le L}} 
\big|\Delta_{h}^{2 \bar{m} - u ,e}f_{k^1 +\ell }(x+ u  \diamond h) \big|^p \prod_{i=L+1}^d  d  x_i \bigg)^{1/p} 
\bigg(\int\limits_{\re^{L}} \sup_{\substack{x_i\in \re\\i > L}} \big|\Delta_{h}^{ u ,e }g_{k^2+\nu }(x) \big|^p
\prod_{i=1}^L  dx_i \bigg)^{1/p}.
\eeqq
Let $\gf_L$ denote the Fourier transform with respect to $(x_1, \ldots  , x_L)$.
Observe, that for any $h \in \re^L$
\[
\supp  \gf_L  \big(f_{k^1+\ell} (\, \cdot \, + h, x_{L+1} , \ldots  , x_d)\big)\,  
\subset \big\{(\xi_1, \ldots  , \xi_L):~ |\xi_j|\le 3 \, 2^{k_j^1+\ell_j-1}\, , \: j=1, \ldots   , L \big\}  ,
\]
independent of $x_{L+1} , \ldots \, ,x_d$. Consequently, Nikol'skijs inequality in Proposition \ref{Nikolski} yields
\beqq
 \bigg(\int\limits_{\re^{d-L}} \sup_{\substack{x_i\in \re\\i \le L}} && \hspace{-0.7cm} 
\big|\Delta_{h}^{2 \bar{m} - u ,e}f_{k^1 +\ell }(x+ u  \diamond h) \big|^p \prod_{i=L+1}^d  d  x_i \bigg)^{1/p} 
\\
& \le  &  c_3\,  \prod_{i=1}^L 2^{\frac{k_i^1+\ell_i}{p}} 
\bigg(\int\limits_{\re^{d}}  
\big|\Delta_{h}^{2 \bar{m} - u ,e} f_{k^1 +\ell }(x+ u  \diamond h) \big|^p  dx \bigg)^{1/p} 
\\
& \le  &  c_4\,  \prod_{i \in e_2} 2^{\frac{\ell_i}{p}} 
\bigg(\int\limits_{\re^{d}}  
\big|\Delta_{h}^{ \bar{m}   ,e_1} f_{k^1 +\ell }(x) \big|^p  dx \bigg)^{1/p}
\eeqq
with  constants $c_3, c_4$ independent of $f$, $k$ and $\ell$, since $k^1\in \N_0^d(e_1)$, i.e., $k_i^1=0$ if $i\in e_0\cup e_2$. 
A simple change of coordinates  and an analogous argument with 
respect to $g_{k^2 + \nu}$ results in 
\beqq
&& \hspace{-0.7cm} 
\big\| \Delta_{h}^{2 \bar{m} - u ,e}f_{k^1 +\ell }(\cdot+ u  \diamond h)
\Delta_{h}^{ u ,e}g_{k^2+\nu }(\cdot  )\,  |L_p(\R)\big\|  
\\
& \le & c_5 \, \Big( \prod_{i\in e_2} 2^{\frac{\ell_i}{p}}\Big) \Big(\prod_{ i\in (e_0\cup e_1)} 2^{\frac{\nu_i}{p}}\Big) 
\big\|\Delta_{h}^{\bar{m} ,e_1}f_{k^1 +\ell } \big|L_p (\R)\big\|  \, \big\|\Delta_{h}^{\bar{m} ,e_2}g_{k^2+\nu} \big|L_p (\R)\big\|  
\, .
\eeqq
We need one more notation. For $\ell \in \Z$ we put
\[
 \omega (\ell):= \big\{i \in \{1, \ldots , d\}:~ \ell_i <0 \big\} \qquad \mbox{and}\qquad 
 \overline{\omega} (\ell):= \big\{i \in \{1, \ldots , d\}:~ \ell_i \ge 0 \big\}\, .
\]
Since $k^1\in \N_0^d(e_1)$  and $\varphi_{k_i^1+\ell_i}\equiv 0 $ if $k_i^1+\ell_i<0$,  we can assume that 
\be\label{ell}
(e_0\cup e_2)\subset \overline{\omega}(\ell)\qquad \text{and therefore}\qquad \omega(\ell)\subset e_1\, ;
\ee 
similarly  
\be\label{nu}
(e_0\cup e_1)\subset \overline{\omega}(\nu)\qquad\text{and}\qquad\omega(\nu)\subset e_2.
\ee 
Writing $\Delta_{h}^{ \bar{m}  ,e_1}$ as 
\[
\Delta_{h}^{  \bar{m}  ,e_1} = \Big(\prod_{i \in \overline{\omega} (\ell) \cap e_1} 
 \Delta_{h_i}^{  m }\Big) \Big(\prod_{i \in \omega (\ell)   } \Delta_{h_i}^{  m  }\Big)\, ,
\]
taking Lemma \ref{ddim-1} and Proposition \ref{peetremax} into account, 
it is easily seen that 
\beqq
 \sup_{|h_i|< 2^{-k_i} ,\, i\in  e} \, \big\|\Delta_{h}^{  \bar{m}   ,e_1}f_{k^1 +\ell } \big|L_p (\R)\big\|
 \le c_6 \, \prod_{i \in \omega (\ell)  } 2^{\ell_i m}\, \big\|f_{k^1 +\ell } \big|L_p (\R)\big\|\, , 
\eeqq
where we  used the second part in \eqref{ell} and the definition of $e_1$ as well. 
Similarly 
\[
 \sup_{|h_i|< 2^{-k_i} ,\, i\in  e} \, \big\|\Delta_{h}^{  \bar{m}   ,e_2}g_{k^2 +\nu } \big|L_p (\R)\big\|
 \le c_6 \, \prod_{i \in \omega (\nu)  } 2^{\nu_i m}\, \big\|\, g_{k^2 +\nu } \, \big|L_p (\R)\big\|\, .
\]
Altogether we have found the estimate
\beq \label{k-02}
&& \hspace{-0.7cm} \sup_{|h_i|< 2^{-k_i} ,\, i\in  e}  
\big\| \Delta_{h}^{2 \bar{m} - u ,e}f_{k^1 +\ell }(\cdot+ u  \diamond h)
\Delta_{h}^{ u ,e}g_{k^2+\nu }(\cdot  )\,  |L_p(\R)\big\|
\nonumber
\\
& \le &   c_7 \, \underbrace{\bigg( \prod_{i\in e_2} 2^{\frac{\ell_i}{p}} \prod_{ i\in (e_0\cup e_1)} 2^{\frac{ \nu_i}{p}} \prod_{i \in \omega (\ell) } 2^{\ell_i  m }  \prod_{i \in \omega (\nu)} 2^{\nu_i m}\bigg)
  \|f_{k^1 +\ell } |L_p (\R)\|  \, \|g_{k^2+\nu} |L_p (\R)\|}  .
\\
&& \qquad \qquad\qquad\qquad\qquad\qquad\qquad  P(f,g,k^1,k^2, \ell,\nu)
\nonumber
\eeq 
For simplicity we denote by $P(f,g,k^1,k^2, \ell,\nu)$ the term on the right-hand side in \eqref{k-02}. Hence, by applying triangle inequality we get
 \beq \label{k-08}
 S_{e,u}  & \leq & c_7\, \Bigg\{\sum\limits_{k \in \N_0^d(e)} 2^{r|k|_1 q}\Bigg[\sum_{\ell,\nu\in \Z} P(f,g,k^1,k^2,\ell,\nu) \Bigg]^q\Bigg\}^{1/q} 
 \nonumber
 \\
& \leq & 
 c_7\, \sum_{\ell,\nu\in \Z}\Bigg\{\sum\limits_{k \in \N_0^d(e)}   2^{r|k|_1 q}    P(f,g,k^1,k^2,\ell,\nu)^q  \Bigg\}^{1/q}.
\eeq 
Observe that
 \beq\label{ws-extra1}
 \sum_{k\in \N_0^d(e)} && \hspace*{-0.7cm} 2^{r|k^1+\ell|_1q}2^{r|k^2+\nu|_1q}\,\|\, f_{k^1 +\ell } \, |L_p (\R)\|^q  \, \|\, g_{k^2+\nu}\, |L_p (\R)\|^q 
\nonumber 
\\
&= & \bigg( \sum_{k^1\in \N_0^d(e_1)} 2^{r|k^1+\ell|_1q} \|\, f_{k^1 +\ell} \, |L_p (\R)\|^q \bigg) 
\bigg( \sum_{k^2\in \N_0^d(e_2)} 2^{r|k^2+\nu|_1q} \|\, g_{k^2+\nu}\, |L_p (\R)\|^q\bigg) 
\nonumber
\\
&\leq & \| \, f \, |S^r_{p,q}B(\R)\|^q\, \|\, g\, |S^r_{p,q}B(\R)\|^q\, .
 \eeq
Recall, we only need to consider those terms where $\min_i (k^1_i + \ell_i)\ge 0$ and 
$\min_i (k^2_i + \nu_i)\ge 0$. Hence we get for any $k \in \N_0^d(e)$, see \eqref{ell} and \eqref{nu}, 
\beqq 
&& \hspace{-0.7cm}
\sum_{i=1}^d |k_i| - \sum_{i=1}^d |k_i^1+\ell_i| - \sum_{i=1}^d |k_i^2+\nu_i| =  
\Big(\sum_{i=1}^L k_i^2  - \sum_{i=1}^L  \ell_i
-\sum_{i=1}^L (k_i^2 + \nu_i)\Big)
\\
& + & \Big(
\sum_{i=L+1}^N k_i^1  - \sum_{i=L+1}^N (k_i^1 + \ell_i)  - \sum_{i=L+1}^N  \nu_i\Big)
 -  \Big(\sum_{i=N+1}^d \ell_i + \sum_{i=N+1}^d \nu_i \Big)
\\
&=& - \sum_{i=1}^d (\ell_i + \nu_i) \, .
\eeqq

Again in view of \eqref{ell} and \eqref{nu}, this implies
\beq\label{ws-extra}
&& \hspace*{-0.8cm}\bigg(2^{r|k|_1 } \prod_{i\in e_2} 2^{\frac{ \ell_i}{p}} \prod_{ i\in (e_0\cup e_1)} 2^{\frac{  \nu_i}{p}} 
\prod_{i \in \omega (\ell) } 2^{\ell_i  m }   \prod_{i \in \omega (\nu)} 2^{\nu_i m}\bigg)\bigg(2^{-r|k^1+\ell|_1 }2^{-r|k^2+\nu|_1 } \bigg)
\nonumber
\\
& =  &\bigg( \prod_{i\in e_2} 2^{\frac{ \ell_i}{p}} \prod_{i \in \omega (\ell) } 2^{\ell_i  m } \prod_{i=1}^d2^{-r\ell_i}  \bigg)
\bigg(\prod_{i\in (e_0\cup e_1)} 2^{\frac{  \nu_i}{p}} \prod_{i \in \omega (\nu)} 2^{\nu_i m}\prod_{i=1}^d2^{-r\nu_i} \bigg) 
\nonumber
\\
& = & \bigg( \prod_{i\in e_2} 2^{\ell_i(\frac{1}{p}-r)} \prod_{i \in \omega (\ell) } 2^{\ell_i ( m-r) } 
\prod_{\overline{\omega}(\ell)\backslash e_2}2^{-r\ell_i}  \bigg)
\bigg(\prod_{i\in (e_0\cup e_1)} 2^{\nu_i(\frac{   1}{p}-r)} 
\prod_{i \in \omega (\nu)} 2^{\nu_i (m-r)}\prod_{i\in \overline{\omega}(\nu)\backslash (e_0\cup e_1)} 2^{-r\nu_i} \bigg)
\nonumber
\\
& \leq & \bigg(\prod_{i=1}^d 2^{-|\ell_i|\delta}\bigg)\bigg(\prod_{i=1}^d2^{-|\nu_i|\delta}\bigg)\, 
\eeq
where $\delta :=\min(r-1/p,m-r,r)>0$. Consequently we conclude that
\beqq
 S_{e,u}   & \leq &  c_7\,  \sum_{\ell,\nu\in \Z}\bigg(\prod_{i=1}^d 2^{-|\ell_i|\delta}\bigg)
\bigg(\prod_{i=1}^d2^{-|\nu_i|\delta}\bigg) \, \|\,  f\, |S^r_{p,q}B(\R)\|\, \|\, g\, |S^r_{p,q}B(\R)\| 
\\
&\leq & c_8\,   \| \, f\, |S^r_{p,q}B(\R)\|\,  \|\, g\, |S^r_{p,q}B(\R)\|
\eeqq
for an appropriate constant $c_8$ independent of $f$ and $g$.
\\
{\it Step 3.2.}
The case $1\le p <\infty$, $r=1/p$ and  $ q=1$. Our point of departure is the first inequality in \eqref{k-08}.
This yields
\beqq
 S_{e,u}  \leq c_7 \, \sum\limits_{k \in \N_0^d(e)} 2^{r|k|_1} \sum_{\ell,\nu\in \Z}\, P(f,g,k^1,k^2, \ell,\nu).
\eeqq
To continue we need another splitting of the summation as used in Substep 3.1.
We observe that
 \beqq
&& \hspace*{-0.7cm} \sum_{\ell_i\in \zz\atop i\in [d]\backslash e_1}
\sum_{\nu_i\in \zz\atop i\in [d]\backslash e_2} \sum_{k\in \N_0^d(e)} 2^{r|k^1+\ell|_1}\, 2^{r|k^2+\nu|_1}\,
\|f_{k^1 +\ell } |L_p (\R)\|  \, \|g_{k^2+\nu} |L_p (\R)\| 
\\
&=& \bigg( \sum_{\ell_i\in \zz\atop i\in [d]\backslash e_1}\sum_{k^1\in \N_0^d(e_1)} 2^{r|k^1+\ell|_1} \, \|f_{k^1 +\ell } |L_p (\R)\| \bigg)
 \bigg(\sum_{\nu_i\in \zz\atop i\in [d]\backslash e_2} \sum_{k^2\in \N_0^d(e_2)} 2^{r|k^2+\nu|_1} \|g_{k^2+\nu} |L_p (\R)\|\bigg) 
\\
&\leq & \| \, f\, |S^r_{p,q}B(\R)\|\,  \|\, g\, |S^r_{p,q}B(\R)\|
\eeqq
(as a replacement of \eqref{ws-extra1}) and
\beqq
\bigg(2^{r|k|_1 } && \hspace{-0.7cm}\prod_{i\in e_2} 2^{\frac{ \ell_i}{p}} \prod_{ i\in (e_0\cup e_1)} 2^{\frac{  \nu_i}{p}} 
\prod_{i \in \omega (\ell) } 2^{\ell_i  m }   \prod_{i \in \omega (\nu)} 2^{\nu_i m}\bigg)\bigg(2^{-r|k^1+\ell|_1 }2^{-r|k^2+\nu|_1 } \bigg)
\\
&= & \bigg( \prod_{i\in e_2} 2^{\frac{ \ell_i}{p}} \prod_{i \in \omega (\ell) } 2^{\ell_i  m } \prod_{i=1}^d2^{-r\ell_i}  \bigg)
\bigg(\prod_{i\in (e_0\cup e_1)} 2^{\frac{  \nu_i}{p}} \prod_{i \in \omega (\nu)} 2^{\nu_i m}\prod_{i=1}^d2^{-r\nu_i} \bigg) 
\\
& = & \bigg(  \prod_{i \in \omega (\ell) } 2^{\ell_i ( m-r) } \prod_{\overline{\omega}(\ell)\backslash e_2}2^{-r\ell_i}  \bigg)
\bigg( \prod_{i \in \omega (\nu)} 2^{\nu_i (m-r)}\prod_{i\in \overline{\omega}(\nu)\backslash (e_0\cup e_1)} 2^{-r\nu_i} \bigg)
\\
& \leq & \bigg(\prod_{i\in e_1} 2^{-|\ell_i|\delta_1}\bigg)\bigg(\prod_{i\in e_2}2^{-|\nu_i|\delta_1}\bigg)
\eeqq
with $\delta_1:=\min(m-r,r)$ (as a replacement of \eqref{ws-extra}).  
Now we can conlude as above that
\beqq
S_{u,e} \leq c_9\,  \| \, f\, |S^r_{p,q}B(\R)\|\,  \|\, g\, |S^r_{p,q}B(\R)\|
\eeqq
holds as well in this case. 
\\
{\it Step 4.} Necessity.
We shall work with tensor products of functions and the cross-norm property, see Remark \ref{blabla}.
Let us assume that $S^r_{p,q}B(\R)$ is an algebra with respect to pointwise multiplication.
Then all products of the form 
\[
 \Big(f(x_1) \,\cdot \,  \prod_{i=2}^d \psi (x_i)\Big) \cdot \Big(g(x_1) \,\cdot \,  \prod_{i=2}^d \psi (x_i)\Big)
\]
with $f,g \in B^r_{p,q}(\re)$ and $\psi \in C_0^\infty (\re)$ have to belong to 
$S^r_{p,q}B(\R)$. 
Again in view of the cross-norm property this implies that the product $f \, \cdot \, g$ has to belong to $B^r_{p,q}(\re)$, which means that 
$B^r_{p,q}(\re)$ itself has to be an algebra.
But in this case it is well-known that the given restrictions are necessary and sufficient, we refer, e.g., 
  to \cite{Tr1}, \cite{Tr83} and \cite{RS}.
The proof is complete.\qed


\subsection{Proofs of Lemma \ref{glatt} and Lemma \ref{uniform}}\label{proofs-Teil1}


We  recall some results  about the dual spaces of $S^r_{p,q}B(\R)$. 
For $1\le p \le \infty$ the conjugate exponent $p'$ is determined by $\frac{1}{p}+\frac{1}{p'}=1$.  
It will be convenient to work with the closure of $\cs(\R)$ in these  spaces.

\begin{definition}
By $\mathring{S}^r_{p,q}B(\R)$ we denote the closure of $\cs(\R)$ in $S^r_{p,q}B(\R)$.  
\end{definition}

As in the isotropic case we have
\[
\mathring{S}^r_{p,q}B(\R)  = {S}^r_{p,q}B(\R) \qquad \Longleftrightarrow \qquad \max (p,q) < \infty\, .
\]
Because of the density of $\cs(\R)$ in these spaces any element of the dual space can be interpreted as an element of $\cs' (\R)$.
Hence, a distribution  $f\in \cs'(\R)$ belongs to the dual space $(\mathring{S}^r_{p,q}B(\R))'$  
 if and only if there exists a positive constant $c$ such that
\beqq
|f(\varphi)|\leq c \, \|\varphi|S^r_{p,q}B(\R)\|  \qquad \text{holds for all}\ \  \varphi \in \cs(\R).
\eeqq

\begin{proposition}\label{dual2}
Let $r\in \re$. If $1\leq p \le \infty$ and $1 \le q\le \infty$, then it holds
\[
 [\mathring{S}^r_{p,q}B(\R)]'=S^{-r}_{p',q'}B(\R).
\]
\end{proposition}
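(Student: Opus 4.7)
\noindent\textbf{Proof plan for Proposition \ref{dual2}.}

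The strategy is the standard retract/coretract argument, adapted from the isotropic case (cf.\ \cite{Tr83}), combined with H\"older's inequality on a weighted $\ell_q(L_p)$ sequence space. Pick an enlarged decomposition $\tilde \varphi_k := \sum_{\ell \in \N_0^d,\ |\ell-k|_\infty \le 1} \varphi_\ell$, so that $\tilde \varphi_k \varphi_k = \varphi_k$ and the supports of $\tilde \varphi_k$ still have bounded overlap. Introduce the weighted sequence space $\ell_q^{r}(L_p) := \ell_q(2^{r|k|_1};L_p(\R))$ of sequences $(g_k)_{k\in \N_0^d}$, and the operators $S\colon \mathring S^r_{p,q}B(\R)\to \ell_q^{r}(L_p)$, $Sf := (\cfi[\varphi_k \cf f])_k$, and $R\colon \ell_q^{r}(L_p)\to \mathring S^r_{p,q}B(\R)$, $R(g_k) := \sum_k \cfi[\tilde\varphi_k \cf g_k]$. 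By definition $S$ is an isometry and $R\circ S = \operatorname{Id}$; a routine Fourier multiplier/Nikol'skij argument shows $R$ is bounded (with convergence of the series in $\cs'$ and, on the image of $S$ restricted to $\cs$, in the norm of $\mathring S^r_{p,q}B$).

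\emph{Inclusion $\supset$.} For $f \in S^{-r}_{p',q'}B(\R)$ and $\varphi \in \cs(\R)$, start from the decomposition
\[
\langle f,\varphi\rangle \; = \; \sum_{k\in \N_0^d} \int_{\R} \cfi[\varphi_k \cf f](x)\cdot \cfi[\tilde\varphi_k \cf \varphi](x)\, dx\, ,
\]
which is justified by $\sum_k \varphi_k =1$ together with $\tilde\varphi_k \varphi_k = \varphi_k$. Apply H\"older in $L_{p'}/L_p$ and then in $\ell_{q'}/\ell_q$ with the weight pairing $2^{-r|k|_1}\cdot 2^{r|k|_1}$; since $R$ is bounded, the factor $\|(\cfi[\tilde\varphi_k \cf \varphi])_k\, |\ell_q^{r}(L_p)\|$ is dominated by $\|\varphi\,|\, S^r_{p,q}B(\R)\|$. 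This yields $|\langle f,\varphi\rangle|\ls \|f\,|S^{-r}_{p',q'}B\|\cdot \|\varphi\,|S^r_{p,q}B\|$, and density of $\cs(\R)$ in $\mathring S^r_{p,q}B(\R)$ extends $f$ to a bounded functional. Injectivity of the map $f\mapsto \langle f,\cdot\rangle$ is immediate since the pairing agrees with the $\cs'/\cs$ duality.

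\emph{Inclusion $\subset$.} Given $T\in [\mathring S^r_{p,q}B(\R)]'$, the pull-back $T\circ R$ is a continuous functional on $\ell_q^{r}(L_p)$ (or, when $\max(p,q)=\infty$, on the closure of the image of $\cs$-valued sequences, which is where the preceding Hahn--Banach extension will live). The dual of $\ell_q^{r}(L_p)$ under the natural pairing is $\ell_{q'}^{-r}(L_{p'}) := \ell_{q'}(2^{-r|k|_1};L_{p'}(\R))$; hence there exists $(h_k)\in \ell_{q'}^{-r}(L_{p'})$ representing $T\circ R$. Define $f := \sum_k \cfi[\tilde\varphi_k \cf h_k]$ (convergence in $\cs'$). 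By the dual analogue of the boundedness of $R$ one gets $\|f\,|S^{-r}_{p',q'}B(\R)\|\ls \|T\|$, and a direct check using $R\circ S=\operatorname{Id}$ shows that $f$, viewed as a functional via the pairing of the first inclusion, coincides with $T$ on $\cs(\R)$, hence on $\mathring S^r_{p,q}B(\R)$ by density.

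\emph{Main obstacle.} The delicate point sits in the second inclusion at the boundary cases $p=\infty$ or $q=\infty$, where $(\ell_\infty(L_\infty))^\ast\neq \ell_1(L_1)$ in general. This is precisely the reason for passing to the closure $\mathring S^r_{p,q}B(\R)$ before dualizing: the Hahn--Banach extension then lives on the separable-type subspace on which the standard identification of the dual of the weighted $\ell_q(L_p)$ remains valid, and the retraction argument goes through unchanged. Beyond this one subtlety, the argument is essentially a bookkeeping exercise mirroring the isotropic case.
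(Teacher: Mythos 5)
Your retract/coretract argument is exactly the standard route; the paper itself gives no proof of Proposition \ref{dual2} but delegates to Hansen's thesis, \cite{KS15} and Triebel \cite[2.5.1]{Tr78}, and those references argue precisely as you do (discrete decomposition, H\"older for $\supset$, sequence-space duality plus $R\circ S=\operatorname{Id}$ for $\subset$), so in substance you are reconstructing the proof the paper points to. One imprecision worth fixing in your treatment of the boundary cases: for $p=\infty$ the closure of the Schwartz-valued sequences has components lying in $C_0(\R)$, whose dual is the space of finite Borel measures, \emph{not} $L_1(\R)$ --- so the ``standard identification'' of the dual sequence space does not literally remain valid on the separable subspace, and the representing objects $h_k$ are a priori measures $\mu_k$ with $\sum_k 2^{-r|k|_1 q'}\|\mu_k\|_{\mathcal M}^{q'}<\infty$. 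The argument is rescued at the next step, because $\cfi[\tilde\varphi_k\cf\mu_k]$ is (up to a constant) the convolution of the $L_1$-function $\cfi[\tilde\varphi_k]$ with $\mu_k$ and hence lands in $L_1(\R)$ with norm $\lesssim\|\mu_k\|_{\mathcal M}$, which is what actually puts $f=\sum_k\cfi[\tilde\varphi_k\cf\mu_k]$ into $S^{-r}_{1,q'}B(\R)$; you should say this explicitly rather than attributing the fix to separability. With that correction, and a word on why $R$ maps the closure of the finitely supported Schwartz sequences into $\mathring S^r_{p,q}B(\R)$ (so that $T\circ R$ is defined), your plan is complete and correct for the full range $1\le p,q\le\infty$.
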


We refer to Hansen \cite{Hansen} and \cite{KS15} for most of the  details.
In case $p=\infty$ we refer to Triebel \cite[2.5.1]{Tr78}, in particular to Remark 7 there,  where the isotropic case is treated.
Essentially the arguments used in the isotropic case carry over to the dominating mixed case.
We omit details.  
\vskip 3mm
Now we are in position to prove  Lemma \ref{glatt}.
\vskip 3mm
\noindent
{\bf Proof of Lemma \ref{glatt}}. Theorem \ref{main-be} yields that $C_0^\infty (\R)$ is a subset of 
$M(S^r_{p,q} B(\R))$, if $1\le p,q \le \infty $ and $r>1/p$.
Hence, it will be enough to deal with $r \le 1/p$.
\\
{\em Step 1.} Let $0 < r \le 1/p$.
Therefore we proceed as in proof of Theorem \ref{main-be}. Let $g \in C_0^\infty (\R)$ and $f \in S^r_{p,q}B(\R)$. 
Again we distinguish into the cases $e= \emptyset$ and  $e \neq \emptyset$. Concerning the first one we may argue as above.
Concerning the second one, we notice that we have to estimate again the quantities $S_{e,u}$, see 
\eqref{ws-117}. 
\\
{\em Substep 1.1.}  Let $u_i \le m$ for all $i \in e$.
Clearly 
\beqq
 \sup_{|h_i| < 2^{-k_i}, i \in e}  && \hspace*{-0.8cm}
\big\| \Delta_{h}^{2\bar{m} -u ,e}g(\cdot+u  \diamond h) \Delta_{h}^{u ,e}f(\cdot)|L_p(\R)\big\|  
\\
& \lesssim &  \Big(\prod_{i \in e} 2^{-k_i(2m-u_i)}\Big)
\sup_{|h_i| < 2^{-k_i}, i \in u}
\big\| \Delta_{h}^{u ,e}f\, |L_p(\R)\big\|
  \lesssim   \Big(\prod_{i \in e} 2^{-k_i m}\Big)
\big\| \, f\, |L_p(\R)\big\|\, .
\eeqq
Inserting this into the definition of the $S_{e,u}$, we find
\beqq
\Bigg\{\sum\limits_{k \in \N_0^d(e)} 2^{r|k|_1 q} && \hspace{-0.7cm} \bigg(\sup_{|h_i| < 2^{-k_i}, i \in e} 
\big\| \Delta_{h}^{2 \bar{m} - u ,e}g(\cdot+ u  \diamond h)\Delta_{h}^{ u ,e}f(\cdot)|L_p(\R)\big\|\bigg)^q\Bigg\}^{1/q}
\\
&\le &
\Bigg\{\sum\limits_{k \in \N_0^d(e)} 2^{r|k|_1 q} \Big(\prod_{i \in e} 2^{-k_i m}\Big)^q
\Bigg\}^{1/q} \, \big\| \, f \, |L_p(\R)\big\|
\\
&\lesssim & \| \, f \, | S^r_{p,q}B(\R)\|\, ,
\eeqq
since $m>r$.
\\
{\em Substep 1.2.} The case $m\leq u_i\leq 2m$ for all $i\in e$ is treated as  Step 2 in the proof of Theorem \ref{main-be}.\\
{\em Substep 1.3.} The remaining cases.  Let $e=\{1, \ldots ,N\}$ for some natural number $N$, $N \le d$. 
In addition we assume 
\[
u=(u_{1}, \ldots ,u_L,u_{L+1}, \ldots ,u_N,0,\ldots,0)
\] 
with
$$ m\leq u_i\leq 2m,\quad i=1, \ldots ,L,\qquad 0\leq u_i<m, \quad i= L+1, \ldots ,N$$
and $1\leq L\le N$ and $L< d$. 
For brevity we put
\[
e_1:= \{L+1,\ldots,N\} \qquad \mbox{ and } \qquad  e_2:=\{1,\ldots,L \}\, . 
\]
By assumption both sets are nontrivial.
Each $k \in \N_0^d (e)$ can be written as a sum $k =k^1 + k^2 $, $k^1 \in \N_0^d (e_1)$, $k^2 \in \N_0^d (e_2)$.
Inserting this into the definition of the $S_{e,u}$, we find
\beqq
&& \hspace{-0.7cm}
\Bigg\{\sum\limits_{k \in \N_0^d(e)} 2^{r|k|_1 q}  \bigg(\sup_{|h_i| < 2^{-k_i}, i \in e} 
\big\| \Delta_{h}^{2 \bar{m} - u ,e}g(\cdot+ u  \diamond h)\Delta_{h}^{ u ,e}f(\cdot)|L_p(\R)\big\|\bigg)^q\Bigg\}^{1/q}
\\
&\le &
\Bigg\{\sum\limits_{k^1 \in \N_0^d(e_1)} \sum\limits_{k^2 \in \N_0^d(e_2)} 2^{r|k|_1 q} \Big(\prod_{i \in e_2} 2^{-k_i(2m-u_i)}\Big)^q
\bigg(\sup_{|h_i| < 2^{-k_i}, i \in e_1} 
\big\| \Delta_{h}^{ u ,e_1}f \, |L_p(\R)\big\|\bigg)^q\Bigg\}^{1/q}
\\
& = &
\Bigg\{\sum\limits_{k^2 \in \N_0^d(e_2)} 2^{r|k^2|_1 q} \Big(\prod_{i \in e_2} 2^{-k_im}\Big)^q 
\sum\limits_{k^1 \in \N_0^d(e_1)} 2^{r|k^1|_1 q}
\bigg(\sup_{|h_i| < 2^{-k_i}, i \in e_1} \big\| \Delta_{h}^{ u ,e_1}f \, |L_p(\R)\big\|\bigg)^q \Bigg\}^{1/q} 
\\
& \lesssim &
\, \| \, f \, | S^r_{p,q}B(\R)\|\, . 
\eeqq
This proves the claim in case $r>0$ (we do not need $r\le 1/p$).
\\
{\em Step 2.} Let $r<0$. We shall argue by duality. 
Observe that the adjoint operator to $T_g$ is given by $T_{\bar{g}}$ and 
$g \in M(S^r_{p,q}B(\R))$ if and only if $\bar{g} \in M(S^r_{p,q}B(\R))$.
Hence, if $T_g \in \cl (S^r_{p,q}B(\R))$, then  $T_g \in \cl ((S^{r}_{p,q}B(\R))')$ follows.
\\
{\em Substep 2.1.}
Let $\max (p,q)< \infty$. Then Proposition \ref{dual2} and Step 1 yield
\be\label{ws-118}
C_0^\infty (\R) \subset M(S^{-r}_{p',q'}B(\R))\, .
\ee
{\em Substep 2.2.} Let $\max (p,q) =  \infty$. Then $\mathring{S}^r_{p,q} B(\R)$ is a proper subspace 
of ${S}^r_{p,q} B(\R)$. 
If $ g\in C_0^\infty (\R)$ and $g \in M({S}^r_{p,q} B(\R))$ then 
$g \in M(\mathring{S}^r_{p,q} B(\R))$ as well.
The same duality argument as in Substep 2.1 leads to \eqref{ws-118} also in this case.
Hence, \eqref{ws-118} is valid for all $1 \le p,q \le \infty$ and all $r>0$.
\\
{\em Step 3.} The case $r=0$.
We proceed by complex interpolation.
Let $X$ be a quasi-Banach space  of distributions. 
By $\accentset{\diamond}{X}$ we denote the closure in $X$ of the set of all infinitely
differentiable functions $g$ such that $D^{{\alpha}} g \in X$ for all ${\alpha} \in \N_0^d$.

\begin{proposition}\label{inter2}
Let $\Theta\in (0,1)$, $r_i\in \mathbb{R}$ and  $1\le p_i,q_i\leq \infty$, $i=1,2$. 
\\
{\rm (i)} Suppose 
\[
\min \Big(\max(p_1,q_1), \max(p_2,q_2)\Big)<\infty \, .
\]
If $r_0,p_0$ and $q_0$ are given by
\be\label{ws-120}
\frac{1}{p_0}=\frac{1-\Theta}{p_1}+\frac{\Theta}{p_2},
\qquad \frac{1}{q_0}=\frac{1-\Theta}{q_1}+\frac{\Theta}{q_2},\qquad r_0=(1-\Theta)r_1+\Theta r_2,
\ee
then
\beqq
S^{r_0}_{p_0,q_0}B(\R)=[S^{r_1}_{p_1,q_1}B(\R),\, S^{r_2}_{p_2,q_2} B(\R)]_{\Theta}.
\eeqq
{\rm (ii)} Let $r_1 \neq r_2$.
If $r_0$ and $q_0$ are defined  as in \eqref{ws-120}, then
\[
\accentset{\diamond}{S}^{r_0}_{\infty,q_0}B(\R)=[S^{r_1}_{\infty,q_1}B(\R),\, S^{r_2}_{\infty,q_2} B(\R)]_{\Theta},
\]
{\rm (iii)} Let $ 1 \le p_1 = p_2<\infty$ and $r_1 \neq r_2$. Let  $r_0,p_0$ and $q_0$ be given by
\eqref{ws-120}, 
 then
\[
\accentset{\diamond}{S}^{r_0}_{p_0,\infty}B(\R)=[S^{r_1}_{p_1,\infty}B(\R),\, S^{r_2}_{p_2,\infty} B(\R)]_{\Theta},
\]
\end{proposition}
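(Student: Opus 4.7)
The plan is the classical retraction/coretraction reduction to interpolation of weighted vector-valued sequence spaces, following the pattern used for isotropic Besov spaces in \cite[\S 2.5]{Tr78}. With the smooth dyadic decomposition $(\varphi_k)_{k\in \N_0^d}$ fixed, let $\tilde\varphi_k \in C_0^\infty(\R)$ be of tensor product type, identically one on $\supp \varphi_k$ and supported in a slight enlargement. Introduce the weighted sequence space $\ell^r_q(L_p)$ of all sequences $(f_k)_{k\in \N_0^d}\subset L_p(\R)$ for which
\[
\big\|(f_k)\,\big|\,\ell^r_q(L_p)\big\|:=\Big(\sum_{k\in \N_0^d} 2^{r|k|_1 q}\,\|\,f_k\,|\,L_p(\R)\|^q\Big)^{1/q}<\infty,
\]
and define the coretraction $J f := (\cfi[\varphi_k \cf f])_{k\in \N_0^d}$ together with the retraction $R((f_k)) := \sum_{k\in \N_0^d} \cfi[\tilde\varphi_k \cf f_k]$. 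By construction $J:~S^r_{p,q}B(\R) \to \ell^r_q(L_p)$ is an isometric embedding, while $R:~\ell^r_q(L_p) \to S^r_{p,q}B(\R)$ is bounded (since $\cfi[\tilde\varphi_k \cf \,\cdot\,]$ is convolution with a function whose $L_1$-norm stays uniformly bounded in $k$, thanks to the tensor product structure of $\tilde\varphi_k$), and $R\circ J = \mathrm{Id}$.

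By the retraction/coretraction principle for complex interpolation, the three claims reduce to the corresponding interpolation identities for the sequence spaces. Concretely, (i) becomes
\[
[\ell^{r_1}_{q_1}(L_{p_1}), \ell^{r_2}_{q_2}(L_{p_2})]_\Theta = \ell^{r_0}_{q_0}(L_{p_0})
\]
under the hypothesis that at least one endpoint has both $p$ and $q$ finite, which is a standard consequence of Calder\'on's theorem for $L_p$-spaces combined with interpolation of weighted $\ell_q$-sums (see \cite[\S 1.18.1]{Tr78}). Likewise, (ii) and (iii) become analogous vector-valued identities in which either $L_\infty$ (case (ii)) or $\ell_\infty$ (case (iii)) appears at both endpoints and $r_1\neq r_2$.

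The main technical point is the treatment of the $\infty$-endpoints in (ii) and (iii). When $\max(p,q)=\infty$ the Schwartz class is no longer dense in the endpoint spaces, and Calder\'on's first complex method produces only the closure $\accentset{\diamond}{X}$ of the smooth elements inside the limit space. The hypothesis $r_1\neq r_2$ is used precisely at this stage: it forces the holomorphic function in the Calder\'on strip realizing an interpolated element to have its weighted components $2^{r_0|k|_1}\|f_k|L_p\|$ decay to zero as $|k|_1\to\infty$ (a consequence of the three-lines lemma applied with the two distinct weights $r_1,r_2$), and this decay is exactly the condition characterizing $\accentset{\diamond}{\ell}^{r_0}_{q_0}(L_p)$ inside $\ell^{r_0}_{q_0}(L_p)$. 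The retraction $R$ then transports this closure onto $\accentset{\diamond}{S}^{r_0}_{p_0,q_0}B(\R)$. All remaining steps are a routine adaptation of the isotropic theory from \cite[\S 2.5]{Tr78} or \cite[Chapter 2]{ST} to the dominating mixed setting, essentially amounting to the replacement of the index set $\N_0$ by $\N_0^d$ and of the weight $2^{rj}$ by $2^{r|k|_1}$; the expected obstacle is the verification that $R$ indeed maps the Calder\'on closure at sequence level bijectively onto $\accentset{\diamond}{S}^{r_0}_{p_0,q_0}B(\R)$, which uses Lemma \ref{glatt} (the multiplier property of $C_0^\infty$) applied to smooth cutoffs of the reconstruction formula.
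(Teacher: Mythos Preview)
Your outline via the retraction/coretraction device is the standard route and is exactly what the references invoked by the paper (Vybiral for (i), Yuan--Sickel--Yang for (ii), (iii)) carry out; the paper itself gives no independent argument but merely cites these sources and remarks that the arguments transfer to the dominating mixed setting. So your strategy matches the intended one.

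There is, however, one genuine problem at the very end. You appeal to Lemma~\ref{glatt} to identify the image of the Calder\'on closure at sequence level with $\accentset{\diamond}{S}^{r_0}_{p_0,q_0}B(\R)$. In this paper Proposition~\ref{inter2} is \emph{used} in the proof of Lemma~\ref{glatt} (Step~3, the case $r=0$, is handled by complex interpolation via Proposition~\ref{inter2}(i)--(iii)). Invoking Lemma~\ref{glatt} here is therefore circular. Moreover, pointwise multipliers are not what is actually needed for that identification: the retraction $R$ applied to sequences with only finitely many nonzero entries produces band-limited functions, hence smooth functions with all derivatives in $L_p$, and these lie in $\accentset{\diamond}{S}^{r_0}_{p_0,q_0}B(\R)$ by definition; conversely, $J$ maps such smooth elements to sequences whose weighted norms $2^{r|k|_1}\|f_k|L_p\|$ tend to zero. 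The identification of closures is thus a direct consequence of the boundedness of $J$ and $R$ together with the elementary description of $\accentset{\diamond}{\ell}^{r_0}_{q_0}(L_p)$ --- no cutoff or multiplier lemma is required. Drop the reference to Lemma~\ref{glatt} and the argument stands.
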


We refer to Vybiral \cite[Theorem 4.6]{Vybiral}  concerning part (i). 
The isotropic counterparts of parts (ii), (iii) may be found in  Yuan, S., Yang \cite[pp. 1857/1858]{ysy}.
The arguments carry over to the dominating mixed case.
\\
{\em Substep 3.1.} Let $1 \le p,q < \infty $.
Combining Step 1, Step 2, Proposition \ref{inter2}(i) and the interpolation property of the complex method we conclude that 
\[
C_0^\infty (\R) \subset M(S^{0}_{p,q}B(\R))\, .
\]
{\em Substep 3.2.} Let $1 < p \le \infty $ and $q=\infty$.
We argue by duality as in Step 2.
$C_0^\infty (\R) \subset M(S^{0}_{p',1}B(\R))$ yields
$C_0^\infty (\R) \subset M(S^{0}_{p,\infty}B(\R))$.
\\
{\em Substep 3.3.} Let $p=1$ and $q=\infty$. Again we use duality in combination with 
\[
 C_0^\infty (\R) \subset M(\mathring{S}^{0}_{\infty,1}B(\R))\, .
\]
The proof is complete. \qed

\begin{remark}
 \rm 
A closer look to the proof yields
\[
 S^{t}_{\infty,\infty}B(\R) \hookrightarrow M(S^{r}_{p,q}B(\R))
\]
if  $t>|r|$. This follows from the characterization of $S^{t}_{\infty,\infty}B(\R)$ by differences, see Proposition \ref{diff}.
\end{remark}

{~}\\
{\bf Proof of Lemma \ref{uniform}}.
Since $S^{r}_{p,q}B(\R)$ is translation invariant the associated multiplier space has this property as well.
Because of $\psi_\mu \in C_0^\infty (\R)$ Lemma \ref{glatt} yields 
$\psi_\mu \, \cdot \, f\in S^{r}_{p,q}B(\R)$ for all $f\in S^{r}_{p,q}B(\R)$.
Consequently
\beqq
\| \, \psi_\mu \, \cdot \, f\, |S^{r}_{p,q}B(\R)\| &  = &  \| \, \psi \, \cdot \, f (\cdot + \mu)\, |S^{r}_{p,q}B(\R)\|
\le c_\psi \, \| \, f (\cdot + \mu)\, |S^{r}_{p,q}B(\R)\|
\\
&=& c_\psi \, \| \, f \, |S^{r}_{p,q}B(\R)\|\, .
\eeqq
This proves the claim. \qed


\subsection{Proof of the characterization of the multiplier space}


First, we recall the following two results. The first one deals with traces on hyperplanes.

\begin{proposition}
\label{trace} Let $1\le p,q\leq \infty$ and $r>1/p$. 
Let further $L\in \N$ and $ L\leq d$. 
If    $f \in S^r_{p,q}B(\R)$ then the function 
\[
g(x_1,\ldots,x_L):=f(x_1,\ldots,x_L,x_{L+1},\ldots,x_d)
\] of the $L$ variables  $x_1,\ldots, x_L$ ($x_{L+1},\ldots,x_d$ are considered as fixed) belongs to the space $ S^r_{p,q}B(\re^L)$.
\end{proposition}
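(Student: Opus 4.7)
The plan is to use the Fourier analytic definition of $S^r_{p,q}B$ and reduce the estimate on the restricted function to one for $f$ via Nikol'skij's inequality applied in the $d-L$ ``frozen'' variables. Write $y := (x_1, \ldots, x_L) \in \re^L$ and $z := (x_{L+1}, \ldots, x_d) \in \re^{d-L}$ and split every multiindex as $k = (k', k'')$ with $k' \in \N_0^L$, $k'' \in \N_0^{d-L}$. Since $r > 1/p$, Lemma \ref{emb1} gives $f \in C(\R)$, so $g(y) := f(y, z)$ is well-defined pointwise. Put $f_k := \cfi[\varphi_k \cf f]$ and denote by $\varphi^{(L)}_{k'}(y) := \prod_{i=1}^L \varphi_{k_i}(y_i)$ the analogous dyadic decomposition of unity on $\re^L$. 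Using the tensor product form of $\varphi_k$ together with $\sum_{k''} \prod_{j > L} \varphi_{k_j}(\xi_j) \equiv 1$, a direct Fourier computation yields
\[
\cfi\big[\varphi^{(L)}_{k'}\, \cf g\big](y) \;=\; \sum_{k'' \in \N_0^{d-L}} f_{(k', k'')}(y, z).
\]

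Next, for every fixed $y$ the block $f_{(k', k'')}(y, \cdot)$ has Fourier support in the $z$-variables contained in a box of sidelengths $\lesssim \prod_{j > L} 2^{k_j}$. Applying Proposition \ref{Nikolski} in those $d-L$ variables with $p_0 = p$ and target $L_\infty$ gives the pointwise bound
\[
|f_{(k', k'')}(y, z)| \;\lesssim\; \prod_{j = L + 1}^d 2^{k_j / p}\; \big\|f_{(k', k'')}(y, \cdot)\,\big|\,L_p(\re^{d-L})\big\|,
\]
and raising to the $p$-th power, integrating in $y$ and invoking Fubini produces
\[
\big\|f_{(k', k'')}(\cdot, z)\,\big|\,L_p(\re^L)\big\| \;\lesssim\; \prod_{j = L + 1}^d 2^{k_j / p}\; \big\|f_{(k', k'')}\,\big|\,L_p(\R)\big\|.
\]

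Finally, insert these bounds into the Fourier analytic norm of $g$; multiplying and dividing by $2^{r|k''|_1}$ inside the inner sum over $k''$ converts the weight $2^{r|k'|_1}$ into $2^{r|k|_1}$ at the price of an extra factor $\prod_{j > L} 2^{k_j(1/p - r)}$. For $q < \infty$, Minkowski's inequality in $\ell_q$ applied to $k'$ separates the two summations and yields
\[
\big\|g\,\big|\,S^r_{p,q}B(\re^L)\big\| \;\lesssim\; \Bigg(\sum_{k'' \in \N_0^{d-L}} \prod_{j = L + 1}^d 2^{k_j(1/p - r)}\Bigg)\, \big\|f\,\big|\,S^r_{p,q}B(\R)\big\|,
\]
where the geometric sum converges precisely because $r > 1/p$; the case $q = \infty$ is identical with sup replacing the $\ell_q$-norm. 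The one delicate step is this last bookkeeping: the factor $\prod 2^{k_j / p}$ produced by Nikol'skij must be absorbed into the shift $2^{-r|k''|_1}$ down to the summable weight $\prod 2^{k_j(1/p - r)}$, and the strict inequality $r > 1/p$ is exactly what is needed for summability over $k'' \in \N_0^{d-L}$.
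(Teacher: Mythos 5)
Your argument is correct. Note, however, that the paper does not prove this proposition at all: it simply cites \cite[Theorem 2.4.2]{ST}, so your proposal supplies a self-contained proof where the authors defer to the literature. Your route --- split $k=(k',k'')$, observe that $\sum_{k''}\varphi_{(k',k'')}=\varphi^{(L)}_{k'}\otimes 1$ so that the Littlewood--Paley block of $g$ in the first $L$ variables equals $\sum_{k''}f_{(k',k'')}(\cdot,z)$, control each slice by Nikol'skij's inequality in the frozen variables at the cost of $\prod_{j>L}2^{k_j/p}$, and absorb that factor into $2^{-r|k''|_1}$ using $r>1/p$ --- is in fact the standard proof of the trace theorem in the dominating mixed setting, and all the steps check out: the Minkowski step in $\ell_q(k')$ is legitimate, and the geometric series over $k''$ converges exactly under the stated hypothesis. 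Two small points deserve a sentence each in a polished write-up: (a) the identity $\cfi[\varphi^{(L)}_{k'}\cf g]=\sum_{k''}f_{(k',k'')}(\cdot,z)$ uses that the multiplier acts only in the first $L$ variables and hence commutes with freezing $z$, together with absolute uniform convergence of $\sum_{k''}f_{(k',k'')}$ (which follows from $\sum_k 2^{|k|_1/p}\|f_k|L_p\|<\infty$ by H\"older, again using $r>1/p$); and (b) applying Proposition \ref{Nikolski} to the slice $z\mapsto f_{(k',k'')}(y,z)$ requires knowing that this restriction is band-limited in $\re^{d-L}$ with the expected spectrum, which follows from the Paley--Wiener theorem (or can be replaced by an estimate via the Peetre maximal function of Proposition \ref{peetremax}). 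Neither point is a gap, only a detail to record.
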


\begin{proof}
For a proof we refer to \cite[Theorem 2.4.2]{ST}.
\end{proof}

Next we recall the  localization property of the spaces $S^r_{p,p}B(\R)$, proved in \cite{KS16}.

\begin{proposition}\label{local}
 Let $1\le p\leq \infty$ and $r>1/p$. Let further $\psi_{\mu}$, $\mu\in\Z$, be the functions defined in \eqref{ws-10}. Then we have
\beqq
\|f|S^r_{p,p}B(\R)\|\asymp \Big(\sum_{\mu\in \Z} \| \psi_{\mu}f|S^r_{p,p}B(\R)\|^p\Big)^{1/p}\,
\eeqq 
holds for all $f\in S^r_{p,p}B(\R)$.
\end{proposition}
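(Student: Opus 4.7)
The plan is to combine the integral form of the $S^r_{p,p}B(\R)$-norm (Lemma \ref{red}) with a Leibniz-type expansion for mixed differences. That $p=q$ is essential here: by Fubini one may freely interchange the $L_p$-norm in the space variable with both the integral over step-sizes $h$ and the sum over $\mu$.

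For the direction $\gtrsim$, I would apply Lemma \ref{red} to each summand $\psi_\mu f$ and then expand
\[ \Delta_h^{\bar m,e}(\psi_\mu f)(x)=\sum_{v\in\N_0^d(e),\, v\le\bar m}\binom{\bar m}{v}\,\Delta_h^{\bar m-v,e}\psi_\mu(x+v\diamond h)\,\Delta_h^{v,e} f(x), \]
which is obtained by iterating the one-dimensional formula \eqref{formular} in each coordinate of $e$. Smoothness of $\psi$ gives the uniform bound $|\Delta_h^{\bar m-v,e}\psi_\mu(y)|\le C\prod_{i\in e}\min(1,|h_i|^{m-v_i})$, and as a function of $x$ this factor is supported in a cube of side $O(1)$ centred at $\mu$. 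Taking the $L_p$-norm, summing over $\mu$ using the bounded-overlap property of $\{\supp\psi_\mu\}_{\mu\in\Z}$ and integrating against $\prod_{i\in e}|h_i|^{-rp-1}dh_i$ on $[-1,1]^{|e|}$ reduces each contribution to
\[ \int_{[-1,1]^{|e|}}\prod_{i\in e}|h_i|^{-rp-1}\,\min\!\bigl(1,|h_i|^{(m-v_i)p}\bigr)\,\|\Delta_h^{v,e}f\,|L_p(\R)\|^p\,dh. \]
Setting $e_v:=\{i\in e:v_i>0\}$, for $i\in e\setminus e_v$ the integrand in $h_i$ is integrable on $[-1,1]$ because $m>r$, and the remaining integral in the variables $i\in e_v$ is of the same shape as those appearing in Lemmas \ref{diff1} and \ref{red} applied to the subset $e_v$; the dampening $\min$-factors absorb any singularity arising in directions where $v_i<m$ is small. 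Each term is thereby bounded by $C\,\|f|S^r_{p,p}B(\R)\|^p$.

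For the direction $\lesssim$, I would again use Lemma \ref{red} for $f$ itself. Since $\sum_\mu\psi_\mu\equiv 1$ with supports of bounded overlap, for each fixed $x$ and each $h$ with $|h_i|\le 1$ the decomposition $\Delta_h^{\bar m,e} f(x)=\sum_\mu \Delta_h^{\bar m,e}(\psi_\mu f)(x)$ has only $O(1)$ nonzero summands, uniformly in $x$ and $h$. Discrete H\"older then yields
\[ |\Delta_h^{\bar m,e}f(x)|^p\le C\sum_\mu |\Delta_h^{\bar m,e}(\psi_\mu f)(x)|^p. \]
Integrating in $x$ and against $\prod_{i\in e}|h_i|^{-rp-1}dh_i$, summing over $e\subset[d]$, and invoking $p=q$ to exchange the sum over $\mu$ with the integrals produces exactly $\sum_\mu\|\psi_\mu f|S^r_{p,p}B(\R)\|^p$ on the right-hand side. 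The case $p=\infty$ is parallel: $\ell_p$-sums become suprema, the overlap bound is trivial, and the Leibniz argument works with sup-norms throughout.

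The main obstacle lies in the direction $\gtrsim$. The Leibniz expansion produces terms $\Delta_h^{v,e}f$ with $v_i<m$, for which Lemma \ref{red} with uniform order $m$ does not apply directly; one has either to invoke the anisotropic Lemma \ref{diff1} with a non-uniform order vector on $e_v$, or to exploit the $\min(1,|h_i|^{(m-v_i)p})$ factors to keep the integrals convergent when $v_i\le r$. Careful bookkeeping across the $2^d$ choices of $e$ and the many Leibniz multi-indices $v$ is the principal burden.
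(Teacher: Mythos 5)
First, a point of comparison: the paper does not prove Proposition \ref{local} at all --- it is recalled from the earlier paper \cite{KS16} --- so there is no in-text proof to measure your argument against. Your strategy (the difference characterization of Lemma \ref{red}, a coordinatewise Leibniz expansion of \eqref{formular}, and the bounded overlap of the $\psi_\mu$, with $p=q$ used to interchange the $\mu$-sum with the $L_p$- and $h$-integrations) is the natural one and is consistent with the techniques the paper uses elsewhere. The direction $\lesssim$ is complete and correct as you describe it, including the $p=\infty$ modification.

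The gap is exactly where you locate it, and neither of your two proposed remedies is carried out; moreover the first one fails. For a Leibniz term with $0<v_i<m$ and $v_i\le r$, Lemma \ref{diff1} cannot be invoked with order $v_i$ in direction $i$, since that lemma requires every component of the order vector to exceed $r$; and for such $v_i$ the crude bound $\|\Delta_{h_i}^{v_i}f\|_p\lesssim\|f\|_p$ leaves the weight $|h_i|^{-rp-1}\min(1,|h_i|^{(m-v_i)p})$ non-integrable near $0$ whenever $m-v_i\le r$. Making your second remedy precise requires a Marchaud-type inequality for the mixed moduli $\omega^e_m$ (to trade $\omega^e_v(f,t)_p$ for $t^{v}$ times an integral of $\omega^e_{\bar m}$, followed by a Hardy inequality in each coordinate of $e$), which is genuine additional content that you neither state nor prove. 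The cleanest repair, and the one the paper itself uses in the analogous situation (Step 1 of the proof of Theorem \ref{main-be}), is to run the argument with the equivalent norm $\|\cdot\,|S^r_{p,p}B(\R)\|_{(2m)}$ rather than $\|\cdot\|_{(m)}$: in the expansion of $\Delta_h^{2\bar m,e}(\psi_\mu f)$ every direction $i\in e$ then satisfies either $v_i\ge m$, in which case $\Delta_{h_i,i}^{v_i}=\Delta_{h_i,i}^{v_i-m}\Delta_{h_i,i}^{m}$ reduces to the modulus $\omega^{e''}_{\bar m}(f,\cdot)_p$ on the set $e''=\{i: v_i\ge m\}$, or $v_i<m$, in which case $2m-v_i>m>r$ and the factor $\min(1,|h_i|^{(2m-v_i)p})$ makes the $h_i$-integral converge against the trivial bound $\|\Delta_{h_i}^{v_i}f\|_p\lesssim\|f\|_p$. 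With that single change your outline closes without any Marchaud machinery. (You never use the hypothesis $r>1/p$, only $r>0$; that is not an error, but you should say explicitly that your argument needs less than the stated hypothesis.)
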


The heart of the matter consists in the following proposition.

\begin{proposition}\label{q=inf} 
Let $1\le p\leq q\leq \infty$ and $r>1/p$. Then there exists a constant $C$ such that
\beqq
     \|\, f\, \cdot \, g \, |\,S^r_{p,q}B(\R)\|  \leq C\, \| \, f\, |S^r_{p,q}B(\R) \| \,   \|\,   g\, |S^r_{p,q}B(\R)_{\unif} \|\,
\eeqq
holds for all $f\in S^r_{p,q}B(\R)$ and $g\in S^r_{p,q}B(\R)_{\unif} $.
\end{proposition}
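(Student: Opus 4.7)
I would decompose $g$ via the smooth partition of unity $\psi_\mu$ from \eqref{ws-10}: write $g = \sum_{\mu\in\Z} g_\mu$ with $g_\mu := \psi_\mu g$ supported in a cube of fixed size around $\mu\in\Z$ and $\|g_\mu\,|\,S^r_{p,q}B(\R)\| \leq C\|g\,|\,S^r_{p,q}B(\R)_{\unif}\|$ uniformly in $\mu$ by Lemma \ref{glatt}. Choosing $\tilde\psi_\mu\in C_0^\infty(\R)$ with $\tilde\psi_\mu\equiv 1$ on $\supp\psi_\mu$, we have $fg = \sum_\mu (\tilde\psi_\mu f)g_\mu$, and Theorem \ref{main-be} (using $r>1/p$) yields the local algebra estimate
\[
\|(\tilde\psi_\mu f)g_\mu\,|\,S^r_{p,q}B(\R)\| \leq C\,\|\tilde\psi_\mu f\,|\,S^r_{p,q}B(\R)\|\cdot \|g\,|\,S^r_{p,q}B(\R)_{\unif}\|.
\]
The proof then reduces to two summation lemmas.

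\textbf{Sum lemma.} For any family $(h_\mu)_{\mu\in\Z}$ with $\supp h_\mu$ in a cube of fixed size around $\mu$ I want to prove
\[
\Big\|\sum_\mu h_\mu\,\Big|\,S^r_{p,q}B(\R)\Big\| \leq C\Big(\sum_\mu \|h_\mu\,|\,S^r_{p,q}B(\R)\|^p\Big)^{1/p}.
\]
By Lemma \ref{red} it suffices to control $\int_{[-1,1]^{|e|}}\prod_{i\in e}|h_i|^{-rq-1}\|\Delta^{\bar{m},e}_h\sum_\mu h_\mu\|_p^q\,dh$ for each $e\subset [d]$. Bounded overlap of the supports gives $\|\Delta^{\bar{m},e}_h\sum_\mu h_\mu\|_p^p \leq C\sum_\mu \|\Delta^{\bar{m},e}_h h_\mu\|_p^p$ for $|h_i|\leq 1$. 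Raising to the power $q/p\geq 1$ and applying Minkowski's inequality in $L^{q/p}$ of the weighted measure $\prod|h_i|^{-rq-1}dh$ moves $\sum_\mu$ outside the integral and yields the asserted $\ell^p$ bound, after summing the finitely many $e\subset [d]$.

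\textbf{Localization of $f$.} The crucial estimate is
\[
\sum_\mu \|\tilde\psi_\mu f\,|\,S^r_{p,q}B(\R)\|^p \leq C\,\|f\,|\,S^r_{p,q}B(\R)\|^p.
\]
Applying \eqref{formular} iteratively,
\[
\Delta_h^{\bar{m},e}(\tilde\psi_\mu f)(x) = \sum_{v\in\N_0^d(e),\,|v|_\infty\leq m}\binom{\bar{m}}{v}\Delta_h^{\bar{m}-v,e}\tilde\psi_\mu(x+v\diamond h)\,\Delta_h^{v,e}f(x).
\]
The bound $|\Delta_h^{\bar{m}-v,e}\tilde\psi_\mu(y)|\leq C\prod_{i\in e,\,v_i<m}|h_i|^{m-v_i}$ combined with bounded overlap of $(\tilde\psi_\mu)$ gives
\[
\sum_\mu \|\Delta_h^{\bar{m},e}(\tilde\psi_\mu f)\|_p^p \leq C\sum_v \prod_{i\in e,\,v_i<m}|h_i|^{(m-v_i)p}\|\Delta_h^{v,e}f\|_p^p.
\]
Using $p\leq q$ in the form $\sum_\mu a_\mu^q\leq(\sum_\mu a_\mu^p)^{q/p}$ and then integrating in $h$ against $\prod|h_i|^{-rq-1}$, the contribution $v=\bar{m}$ reproduces the $e$-part of the Besov seminorm of $f$. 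For $v\neq\bar{m}$ I would choose $m$ large enough (working with the vector-$m$ version in Lemma \ref{diff1} if necessary) and control the intermediate differences $\Delta_h^{v,e}f$ by Marchaud-type estimates, so that the extra factors $|h_i|^{m-v_i}$ provide both integrability of the weighted integral and a bound by $\|f\,|\,S^r_{p,q}B(\R)\|^q$.

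Combining the sum lemma, the local algebra estimate, and the localization estimate gives $\|fg\|^p \lesssim \sum_\mu \|fg_\mu\|^p \lesssim \|g\|_{\unif}^p\sum_\mu \|\tilde\psi_\mu f\|^p \lesssim \|g\|_{\unif}^p\|f\|^p$, which is the proposition. The main obstacle is the localization step: Proposition \ref{local} is available only for $p=q$, and extending it to $p<q$ requires the Leibniz bookkeeping above; the hypothesis $p\leq q$ enters crucially both in the sum lemma (Minkowski in $L^{q/p}$) and in passing from the $\ell^p$-over-$\mu$ estimate of $\|\Delta(\tilde\psi_\mu f)\|_p$ to an $\ell^q$-over-$\mu$ estimate.
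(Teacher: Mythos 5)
Your overall scheme --- localize both factors, apply the algebra property to each piece, and recombine --- founders on the ``Localization of $f$'' step, and the problem is not one of bookkeeping: the inequality $\sum_\mu \|\tilde\psi_\mu f\,|\,S^r_{p,q}B(\R)\|^p \lesssim \|f\,|\,S^r_{p,q}B(\R)\|^p$ is \emph{false} whenever $p<q$. Take $f=\sum_j f_j$ with $f_j$ concentrated near distinct lattice points $\mu_j$ and with $\cf f_j$ supported where $|\xi_1|\sim 2^j$ and the remaining frequency variables are near the origin, normalized so that $\|f_j\,|\,S^r_{p,q}B(\R)\|\asymp c_j$. Since the Fourier supports sit in pairwise disjoint dyadic blocks, $\|f\,|\,S^r_{p,q}B(\R)\|\asymp(\sum_j c_j^q)^{1/q}$, while $\|\tilde\psi_{\mu_j}f\,|\,S^r_{p,q}B(\R)\|\gtrsim c_j$ up to rapidly decaying tails; choosing $(c_j)\in\ell^q\setminus\ell^p$ kills the estimate. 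This is precisely why the localization property (Proposition \ref{local}) is stated only for $p=q$. Your Leibniz computation cannot repair this: from the pointwise-in-$h$ bound on $\sum_\mu\|\Delta_h^{\bar m,e}(\tilde\psi_\mu f)\|_p^p$ you obtain, after integration in $h$, control of an $L^q_h(\ell^p_\mu)$-type quantity, whereas the localization lemma requires the $\ell^p_\mu(L^q_h)$ quantity $\sum_\mu\bigl(\int w(h)\|\Delta_h^{\bar m,e}(\tilde\psi_\mu f)\|_p^q\,dh\bigr)^{p/q}$. For $p\le q$ Minkowski's inequality gives $\|\cdot\|_{L^q(\ell^p)}\le\|\cdot\|_{\ell^p(L^q)}$, which is exactly the wrong direction here. (Your ``sum lemma'' is sound --- that is the direction in which Minkowski with exponent $q/p\ge1$ does work.)

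The paper avoids this trap by never measuring the localized pieces of $f$ in the full $S^r_{p,q}B$-norm. In the Leibniz expansion of $\Delta_h^{2\bar m,e}\bigl(\sum_\mu\phi_\mu f\cdot\psi_\mu g\bigr)$ the differences are distributed between the two factors term by term; whenever the full Besov regularity lands on $\psi_\mu g$ (Step 3 of the paper's proof), the factor $\phi_\mu f$ is only measured in $C(\R)$, and $(\sum_\mu\|\phi_\mu f\,|C(\R)\|^p)^{1/p}$ is controlled via the $p=q$ localization of the strictly smaller space $S^{r-\varepsilon}_{p,p}B(\R)$, into which $S^r_{p,q}B(\R)$ embeds --- a localization in a weaker norm, which is true. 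The genuinely hard part is then the mixed terms (Step 4), handled by splitting the variables into two groups, applying Lemma \ref{red} in a subset of the variables, Fourier decompositions combined with Peetre maximal functions, and the elementary inequality \eqref{key-2}. Any repair of your plan has to replace the false localization lemma by an argument of this kind.
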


\begin{proof}
We follow the proof of Theorem \ref{main-be}. Again we make use of the characterizations by differences. 
Let $r<m\leq r+1$. Then we shall prove that  
\beqq
     \|\, f \, \cdot \,g\, |\,S^r_{p,q}B(\R)\|_{(2m)} \leq C\, \|\,  f \, |S^r_{p,q}B(\R) \| \,  \|\,  g\, |S^r_{p,q}B(\R)_{\unif} \|\,
\eeqq
holds for all $f \in S^r_{p,q}B(\R)$ and $g\in S^r_{p,q}B(\R)_{\unif} $.
\\
{\it Step 1.} Let $\psi $ be the function in Definition \ref{def-unif} and $\phi\in C_0^{\infty}(\R)$ chosen such that 
$\phi\equiv 1 $ on the support of $\psi$. It follows that
\beqq
 \|\, f\, \cdot \, g \, |\,S^r_{p,q}B(\R)\|_{(2m)}=  \Big\|\sum_{\mu\in \Z}\phi_{\mu}f\,  \psi_{\mu}g \,\Big|\,S^r_{p,q}B(\R)\Big\|_{(2m)} .
\eeqq
In case $1\le p < \infty $ the series $\sum_{\mu\in \Z}\phi_{\mu}f\,  \psi_{\mu}g$ is convergent in $S^r_{p,q}B(\R)$, in case $p=\infty$
we use the fact, that the sum is locally finite.
Clearly
\beqq 
\Big\| \sum_{\mu\in \Z}\phi_{\mu}f \psi_{\mu}g\Big|L_p(\R)\Big\| \, 
&\leq & \, \Big\| \sum_{\mu\in \Z}|\phi_{\mu}f| \cdot \| \psi_{\mu}g|C(\R)\|\Big|L_p(\R)\Big\| 
\\
 &\lesssim & \, \| f|L_p(\R)\|\,  \sup_{\mu\in \Z}\, \|\psi_{\mu}g|C(\R)\| 
\\ 
& \lesssim & \| \, f\, |S^r_{p,q}B(\R) \| \, \| \,  g \, |S^r_{p,q}B(\R)_\unif \|\, , 
\eeqq 
where we used Lemma \ref{emb1} in the last step.
For $e\subset [d]$,  $e\not=\emptyset$, we have
\beqq
\Delta_{h}^{2\bar{m},e}\bigg(\sum_{\mu\in \Z}\phi_{\mu}f\, \cdot \, \psi_{\mu}g\bigg)(x)
=\sum_{ |u|_\infty\leq 2m}\sum_{\mu\in \Z} C_{ u }\Delta_{h}^{2 \bar{m} - u ,e}(\phi_{\mu}f)(x+ u  \diamond h)
\, \Delta_{h}^{ u ,e}(\psi_{\mu}g)(x) \,, 
\eeqq
$h\in \R$, 
where $2\bar{m}-u=(2m-u_1, \ldots ,2m-u_d)$, see \eqref{mot}. This makes clear that we have to estimate  the terms
\be\label{hai-2}
S_{e,u}:=\Bigg\{ \sum_{k\in \N_0^d(e)} 2^{r|k|_1q }\sup_{|h_i| < 2^{-k_i}, i \in e} 
\Big\| \sum_{\mu\in \Z}\big|  \Delta_{h}^{2 \bar{m} - u ,e}(\phi_{\mu}f)(\cdot+ u  \diamond h)\, 
\Delta_{h}^{ u ,e}(\psi_{\mu}g)(\cdot)\big|\Big|L_p(\R)\Big\|^q\Bigg\}^{1/q} .
\ee 
For brevity  we put 
\[
P_k: = \Big\| \sum_{\mu\in \Z}\big|  \Delta_{h}^{2 \bar{m} - u ,e}(\phi_{\mu}f)(\cdot+ u  \diamond h)\, 
\Delta_{h}^{ u ,e}(\psi_{\mu}g)(\cdot) \big|\, \, \Big|L_p(\R)\Big\| \, .
\]
{\it Step 2.} Estimate of $S_{e,u}$ in case $u_i<m$ for all $i\in e$. 
We have
\be\label{u<m}
P_k
 \lesssim  \Big\| \sum_{\mu\in \Z}   
 | \Delta_{h}^{2\bar{m} - u ,e}(\phi_{\mu}f)(x) |\Big|L_p(\R)\Big \|\cdot 
 \sup_{\mu\in \Z}\sup_{x\in \R} |\Delta_{h}^{u ,e}(\psi_{\mu}g)(x)| .
\ee
By Lemma \ref{emb1} it is easily seen that 
\beqq
\sup_{\mu\in \Z}\sup_{x\in \R} |\Delta_{h}^{u ,e}(\psi_{\mu}g)(x)| & \lesssim & \sup_{\mu\in \Z}\sup_{x\in \R} | (\psi_{\mu}g)(x)| 
\\
&=& \sup_{\mu\in \Z} \| \psi_{\mu}g |C(\R)\| \lesssim \|  g|S^r_{p,q}B(\R)_\unif \|.
\eeqq
We estimate the first term on the right-hand side of \eqref{u<m} by using the decomposition
\[
\phi_{\mu}f=\phi_{\mu}\sum_{ \ell \in \Z}  \gf^{-1}  \varphi_{k+ \ell } \gf f=\sum_{ \ell \in \Z}  \phi_{\mu}f_{k+\ell}, 
\] 
see Substep 3.1 in the proof of Theorem \ref{main-be}. It follows that
\[
 \Big\| \sum_{\mu\in \Z}   | \Delta_{h}^{2\bar{m} - u ,e}(\phi_{\mu}f)(\cdot) |\Big|L_p(\R)\Big \|
 \leq  \sum_{\ell \in \Z} \Big\| \sum_{\mu\in \Z}   | \Delta_{h}^{2\bar{m} - u ,e}(\phi_{\mu}f_{k+\ell})(\cdot) |\Big|L_p(\R)\Big \| . 
\] 
Again we shall use the notation
\[
 \omega (\ell):= \big\{i \in \{1, \ldots , d\}:~ \ell_i <0 \big\} \qquad \mbox{and}\qquad 
 \overline{\omega} (\ell):= \big\{i \in \{1, \ldots , d\}:~ \ell_i \ge 0 \big\}\, .
\] 
Note that there exists a positive constant $c$ such that  $|x-\mu|> c$ implies $| \Delta_{h}^{2\bar{m} - u ,e}(\phi_{\mu}f_{k+\ell})(x) | \equiv 0$ for all $\mu$.
In case  $|x-\mu|\leq c$ Lemma \ref{ddim-2} yields 
\beqq
| \Delta_{h}^{2\bar{m} - u ,e}(\phi_{\mu}f_{k+\ell})(x) |&\lesssim & 
\bigg(\prod_{\omega(\ell)\cap e}2^{ \ell_i(2m-u_i)} \prod_{\bar{\omega}(\ell)\cap e}2^{ \ell_ia}\bigg)P_{2^{k+ \ell },a}f_{k+ \ell }(x)\\
&\leq & 
\bigg(\prod_{\omega(\ell)\cap e}2^{ \ell_i m} \prod_{\bar{\omega}(\ell)\cap e}2^{ \ell_ia}\bigg)P_{2^{k+ \ell },a}f_{k+ \ell }(x),
\eeqq
since $\ell_i<0$ and $u_i\leq m$ for  $i\in \omega(\ell)\cap e$. We choose $a$ such that $1/p<a<r$.  Hence
\beqq
\Big\| \sum_{\mu\in \Z}   | \Delta_{h}^{2\bar{m} - u ,e}(\phi_{\mu}f)(\cdot) |\Big|L_p(\R)\Big \| 
& \lesssim & 
\sum_{\ell\in \Z}\bigg(\prod_{\omega(\ell)\cap e}2^{ \ell_i m} \prod_{\bar{\omega}(\ell)\cap e}2^{ \ell_ia}\bigg) \|P_{2^{k+ \ell },a}f_{k+ \ell }|L_p(\R)\|  
\\
& \lesssim & \sum_{\ell\in \Z}\bigg(\prod_{\omega(\ell)\cap e}2^{ \ell_i m} \prod_{\bar{\omega}(\ell)\cap e}2^{ \ell_ia}\bigg)\| f_{ k+ \ell }|L_p(\R)\|\,.
\eeqq 
see Theorem \ref{peetremax}. This implies 
\beqq
P_k  \lesssim 
\sum_{\ell\in \Z}\bigg(\prod_{\omega(\ell)\cap e}2^{ \ell_i m} \prod_{\bar{\omega}(\ell)\cap e}2^{ \ell_ia}\bigg)\| f_{ k+ \ell }|L_p(\R)\| 
\cdot  \|  g|S^r_{p,q}B(\R)_\unif \|. 
\eeqq
Inserting this into \eqref{hai-2},  we obtain 
\beqq
S_{e,u} &\lesssim &  
\Bigg\{ \sum_{k\in \N_0^d(e)}\bigg[  2^{r|k|_1 } \sum_{\ell\in \Z}\bigg(\prod_{\omega(\ell)\cap e}2^{ \ell_i m} \prod_{\bar{\omega}(\ell)\cap e}2^{ \ell_ia}\bigg)
\| f_{ k+ \ell }|L_p(\R)\|\bigg]^q \Bigg\}^{1/q}  
\|  g|S^r_{p,q}B(\R)_\unif \|  
\\
&\lesssim & \sum_{\ell\in \Z}\Bigg\{\sum_{k\in \N_0^d(e)}\bigg(2^{r|k|_1} \prod_{i\in \omega(\ell)\cap e}2^{ \ell_i m} 
\prod_{i\in \bar{\omega}(\ell)\cap e}2^{ \ell_ia}  \bigg)^q \| f_{ k+ \ell }|L_p(\R)\|^q\Bigg\}^{1/q}
\|  g|S^r_{p,q}B(\R)_\unif \| \, .
\eeqq
Observe that
\beq
2^{-r|k+\ell|_1}\bigg(2^{r|k|_1} \prod_{i\in \omega(\ell)\cap e}2^{ \ell_i m} \prod_{i\in \bar{\omega}(\ell)\cap e}2^{ \ell_ia} \bigg)
&= & \prod_{i\in \omega(\ell)\cap e}2^{\ell_i(m-r)}\prod_{i\in \bar{\omega}(\ell)\cap e}2^{\ell_i(a-r)}\prod_{i\in [d]\backslash e}2^{-\ell_ir} \nonumber
\\
&\leq & \prod_{i=1}^d2^{-|\ell_i|\delta}, \label{key-0}
\eeq
where $\delta :=\min(m-r,r-a,r)>0$. This leads to
\beqq
S_{e,u}&\lesssim &\sum_{\ell\in \Z} \Big( \prod_{i=1}^d2^{-|\ell_i|\delta} \Big)\cdot \| f|S^r_{p,q}B(\R)\| \cdot  \|  g|S^r_{p,q}B(\R)_\unif \| 
\\
&\lesssim & \| f|S^r_{p,q}B(\R)\| \cdot  \|  g|S^r_{p,q}B(\R)_\unif \|.
\eeqq
{\it Step 3.} Estimate of $S_{e,u}$ in case $u_i\geq m$ for all $ i\in e$. 
We have
\beqq
   \big\| \Delta_h^{2\bar{m} - u,e}(\phi_{\mu}f)(\cdot+uh) && \hspace{-0.7cm} \Delta_h^{u,e}(\psi_{\mu}g)(\cdot) \big|  L_p(\R)\big\|^p \\ 
   & \leq &
     \big\| \Delta_h^{2\bar{m} - u,e}(\phi_{\mu}f)(\cdot) \big|C(\R) \big\|^p \cdot  \big\|\Delta_h^{u,e}(\psi_{\mu}g)(\cdot) \big|  L_p(\R)\big\|^p   
 \\
 & \lesssim &  \big\| \phi_{\mu}f  \big|C(\R) \big\|^p \cdot  \big\|\Delta_h^{u,e}(\psi_{\mu}g)(\cdot) \big|  L_p(\R)\big\|^p   .
\eeqq
Inserting this into $S_{e,u}$ and applying  the triangle inequality with $q/p\geq 1$ we have found
\beqq
S_{e,u} &\lesssim & \Bigg\{\sum_{k\in \N_0^d(e)}\bigg(2^{r|k|_1p} \sup_{|h_i|< 2^{-k_i}, i\in e}  \sum_{\mu\in \Z}
\big\| \phi_{\mu}f \big|C(\R) \big\|^p \cdot  \big\|\Delta_h^u(\psi_{\mu}g)(\cdot) \big|  L_p(\R)\big\|^p\bigg)^{q/p} \Bigg\}^{1/q}  
\\
&\leq & \bigg\{\sum_{\mu\in \Z}   \big\| \phi_{\mu}f \big|C(\R) \big\|^p  \cdot \big\| \psi_{\mu}g|S^r_{p,q}B(\R)\big\|^p_{(m)} \bigg\}^{1/p} 
\\
& \leq & \bigg\{\sum_{\mu\in \Z}   \big\| \phi_{\mu}f \big|C(\R) \big\|^p  \bigg\}^{1/p}  \cdot \big\|  g|S^r_{p,q}B(\R)_{\unif}\big\|.
\eeqq
Since $r>1/p$, there exists some $\varepsilon>0$ such that $r-\varepsilon>1/p$. 
This implies $S^{r-\varepsilon}_{p,p}B(\R)\hookrightarrow C(\R)$, see Lemma \ref{emb1}. Hence, by means of the localization property 
of $S^{r-\varepsilon}_{p,p}B(\R)$, see Proposition \ref{local}, 
\beqq
\bigg( \sum_{\mu\in \Z} \big\| \phi_{\mu}f \big|C(\R) \big\|^p \bigg)^{1/p} 
\lesssim  \bigg( \sum_{\mu\in \Z} \big\| \phi_{\mu}f \big|S^{r-\varepsilon}_{p,p}B(\R) \big\|^p    \bigg)^{1/p} \asymp \| f|S^{r-\varepsilon}_{p,p}B(\R)\|\, .
\eeqq
Now the elementary  embedding $S^r_{p,q}B(\R) \hookrightarrow S^{r-\varepsilon}_{p,p}B(\R)$ implies 
 \[
S_{e,u}  \lesssim  \| \, f\, | S^r_{p,q }B(\R)\| \, \|\,   g \, |S^r_{p,q}B(\R)_\unif \|.
\]
{\it Step 4.} Estimate of $S_{e,u}$  for the remaining cases. We shall use the same notation as in proof of Theorem  \ref{main-be}, 
Step 3, i.e., we assume that  $e=\{1, \ldots ,N\}$ for some natural number $N$, $N \le d$, 
\[
u=(u_{1}, \ldots ,u_L,u_{L+1}, \ldots ,u_N,0,\ldots,0)
\] 
with 
\[ 
m\leq u_i\leq 2m,\quad i=1, \ldots ,L,\qquad 0\leq u_i<m, \quad i= L+1, \ldots ,N
\]
and $1\leq L\le N$ and $L< d$. 
Again we define 
\[
e_1:= \{L+1,\ldots,N\} \qquad \mbox{ and } \qquad  e_2:=\{1,\ldots,L \}\, . 
\]
Both sets are nontrivial. This covers all remaining cases up to an enumeration. 
Again we make use of  $\N_0^d(e)=\N_0^d(e_1)\cup \N_0^d(e_2)$. For brevity we put 
\[
 T_{\mu,h} (f,g):= \| \Delta_{h}^{2 \bar{m} - u ,e}(\phi_{\mu}f)(\cdot+ u  \diamond h)\Delta_{h}^{ u ,e}(\psi_{\mu}g)(\cdot) |L_p(\R)\|\, .
\]
Then, because of  $q/p\geq 1$,  \eqref{hai-2} yields
\beq\label{ws-107}
S_{e,u} &\le &  \Bigg\{\sum_{k\in \N_0^d(e)}  2^{r|k|_1q }\sup_{|h_i| < 2^{-k_i}, i \in e} 
\bigg( \sum_{\mu\in \Z} T_{\mu,h}(f,g)^p\bigg)^{q/p}\Bigg\}^{1/q} 
\nonumber
\\
& \leq &  \Bigg\{\sum_{k^1\in \N_0^d(e_1)}2^{r|k^1|_1q }
\Bigg[\bigg(  \sum_{k^2\in \N_0^d( e_2)} 2^{r|k^2|_1q }\bigg[ \sum_{\mu\in \Z}\sup_{|h_i| < 2^{-k_i}, i \in e}  T_{\mu,h}(f,g)^p
\bigg]^{q/p}\bigg)^{p/q}\Bigg]^{q/p}\Bigg\}^{1/q} 
\nonumber
\\
& \le & \Bigg\{\sum_{k^1\in \N_0^d(e_1)}2^{r|k^1|_1q }
\Bigg[   \sum_{\mu\in \Z} \bigg( \sum_{k^2\in \N_0^d(e_2)}2^{r|k^2|_1q }\sup_{|h_i| < 2^{-k_i}, i \in e}  T_{\mu,h}(f,g)^q\bigg)^{p/q}\Bigg]^{q/p}\Bigg\}^{1/q} .
\eeq
We  consider the integral
\beqq
 T_{\mu,h}(f,g)^p
 & \leq & \bigg(\int\limits_{\re^{d-L}}\sup_{ x_i\in \re\atop i\leq L} \big|\Delta_{h}^{2\bar{m} - u ,e}(\phi_{\mu}f )(x+ u  \diamond h) \big|^p\prod_{i=L+1}^d d  x_i \bigg)     \bigg(\int\limits_{\re^{L}}\sup_{x_i\in \re\atop L<i\leq d} \big|\Delta_{h}^{ u ,e }(\psi_{\mu}g)(x) \big|^p\prod_{i=1}^L d  x_i \bigg)  
 \\
  & \lesssim & \bigg(\int\limits_{\re^{d-L}}\sup_{ x_i\in \re\atop i\leq L} \big|\Delta_{h}^{\bar{m} ,e_1}(\phi_{\mu}f )(x ) \big|^p\prod_{i=L+1}^d d  x_i \bigg)      \bigg(\int\limits_{\re^{L}}\sup_{x_i\in \re\atop L<i\leq d} \big|\Delta_{h}^{\bar{m} ,e_2 }(\psi_{\mu}g)(x) \big|^p\prod_{i=1}^L d  x_i \bigg).  
\eeqq
Let $G:~ \R \to \C $ be a given function and $a\subset [d]$. When we write 
$\| \, G \, |S^{t}_{p,p}B(\re^{a}) \big\|$, then we mean that the norm is taken with respect to the variables with indexes in $v$, the remaining are considered as frozen.
In addition we shall use the notation 
\[
 \Delta_{h,t}^{\bar{m},e_2\cup\, a} : = \Delta_{t}^{\bar{m},a}(\Delta_h^{\bar{m},e_2})\quad\text{and}\quad 
 \omega_{\bar{m}}^{e_2\cup a}(G,2^{-k^2},2^{-\ell})_p := \sup_{|h_i|<2^{-k_i},i\in e_2\atop |t_i|<2^{-\ell_i},i\in a}\big\|\Delta_{h,t}^{\bar{m},e_2\cup a}(G)\big|L_p(\R)\big\| .
\]
Since $r>1/p$, there exists $\varepsilon_1>0$ such that $r-\varepsilon_1>1/p$.  From Lemmas \ref{emb1}, \ref{red}, Proposition \ref{trace} and some monotonicity arguments we conclude
\beq  
\int\limits_{\re^{L}}&& \hspace{-0.8cm}\sup_{x_i\in \re\atop L<i\leq d}  \big|\Delta_{h}^{\bar{m} ,e_2 }(\psi_{\mu}g)(x) \big|^p\prod_{i=1}^L d  x_i   
\lesssim       \int\limits_{\re^{L}}  \big\|\Delta_{h}^{\bar{m} ,e_2 }(\psi_{\mu}g)(x)\big|S^{r-\varepsilon_1}_{p,p}B(\re^{e_1\cup e_0}) \big\|^p\prod_{i=1}^L d  x_i 
\nonumber    
\\
 & \lesssim &\int\limits_{\re^{L}}\Bigg(\sum_{a\subset \{L+1,...,d\}} \int\limits_{[-1,1]^{|a|}} \prod_{i \in a} |t_i|^{(-r+\varepsilon_1)p} \big\|  \Delta_{  h,t}^{\bar{m},e_2\cup a} (\psi_\mu g)(\cdot) \big|
   L_p (\re^{d-L})\big\|^p \prod_{i \in a} \frac{dt_i}{|t_i|} \Bigg) \prod_{i=1}^L d  x_i 
\nonumber
\\
 & = &  \sum_{a\subset \{L+1,...,d\}} \int\limits_{[-1,1]^{|a|}} \prod_{i \in a} |t_i|^{(-r+\varepsilon_1)p} \big\|  \Delta_{  h,t}^{\bar{m},e_2\cup a} (\psi_\mu g)(\cdot) \big|
 L_p (\R)\big\|^p \prod_{i \in a} \frac{dt_i}{|t_i|} 
 \nonumber \\
&\lesssim &  \sum_{a\subset \{L+1,...,d\}}   \sum_{\ell \in \N_0^d(a)}2^{|\ell|_1(r-\varepsilon_1)p} \omega_{\bar{m}}^{e_2\cup a}(\psi_\mu g,2^{-k^2},2^{-\ell})_p^p . \label{key-1}
\eeq
We need one more abbreviation
\[
F_\mu (k^1) :=  \sup_{|h_i| < 2^{-k_i}, i \in  e_1} \, \int\limits_{\re^{d-L}}\sup_{ x_i\in \re\atop i\leq L} 
\big|\Delta_{h}^{\bar{m} ,e_1}(\phi_{\mu}f )(x ) \big|^p\prod_{i=L+1}^d d  x_i \, .
\]
This leads to the estimate of the term in $[\ldots]$ in \eqref{ws-107}
\beqq
&&
\sum_{\mu\in \Z} \bigg( \sum_{k^2\in \N_0^d(e_2)}2^{r|k^2|_1q }\sup_{|h_i| < 2^{-k_i}, i \in e}  T_{\mu,h}(f,g)^q\bigg)^{p/q} \\
&\lesssim&
 \sum_{\mu\in \Z} F_\mu(k^1) \Bigg\{\sum_{k^2\in \N_0^d(e_2)}  2^{r|k^2|_1q }\bigg(   \sum_{a\subset \{L+1,...,d\}}   \sum_{\ell \in \N_0^d(a)}2^{|\ell|_1(r-\varepsilon_1)p} \omega_{\bar{m}}^{e_2\cup a}(\psi_\mu g,2^{-k^2},2^{-\ell})_p^p\bigg)^{q/p}\Bigg\}^{p/q}.
\eeqq
Next we apply the elementary inequality 
\be \label{key-2}
\sum_{j\in \N_0} |a_j| \leq c  \, \Big(\sum_{j\in \N_0} 2^{j\varepsilon t}|a_j|^{t}\Big)^{1/t}\, , 
\ee
valid for all $\varepsilon>0$ and all $t\ge 1$. This inequality, used with $t= q/p$, yields
\beqq
\sum_{\mu\in \Z} && \hspace{-0.7cm}\bigg( \sum_{k^2\in \N_0^d(e_2)}2^{r|k^2|_1q }\sup_{|h_i| < 2^{-k_i}, i \in e}  T_{\mu,h}(f,g)^q\bigg)^{p/q} \\
&\lesssim& \sum_{\mu\in \Z} F_\mu(k^1) \Bigg\{ \sum_{a\subset \{L+1,...,d\}}  \sum_{k^2\in \N_0^d(e_2)} \sum_{\ell \in \N_0^d(a)}  
2^{r|k^2|_1q }\,   2^{|\ell|_1rq}\, \omega_{\bar{m}}^{e_2\cup a}(\psi_\mu g,2^{-k^2},2^{-\ell})_p^q\Bigg\}^{p/q} \\
&\lesssim&  \sum_{\mu\in \Z} F_\mu(k^1)   \|\,  \psi_\mu g\, | S^r_{p,q}B(\R)\|^p
\eeqq
since $a $ and $e_2$ are disjoint.
This can be inserted into the estimate of $S_{u,e}$ to get
\beqq  
S_{u,e} &\lesssim & 
\Bigg\{\sum_{k^1\in \N_0^d(e_1)}2^{r|k^1|_1q }\bigg[   \sum_{\mu\in \Z} F_\mu (k^1)\,  \|\, \psi_{\mu}g\, |S^r_{p,q}B(\R)\|^p \bigg]^{q/p}\Bigg\}^{1/q} 
\\
& \le & \Bigg\{\sum_{k^1\in \N_0^d(e_1)}2^{r|k^1|_1q }
\bigg[   \sum_{\mu\in \Z} F_\mu (k^1)      \bigg]^{q/p}\Bigg\}^{1/q} \, \| \, g\, |S^r_{p,q}B(\R)_{\unif}\|.
\eeqq 
To finish the proof it will be sufficient to show that 
\beqq
S_{u,e}^*:= 
\Bigg\{\sum_{k^1\in \N_0^d(e_1)}2^{r|k^1|_1q }\bigg[   \sum_{\mu\in \Z} F_\mu (k^1)      \bigg]^{q/p}\Bigg\}^{1/q} \leq C \,  \|\,  f\, |S^r_{p,q}B(\R)\|
\eeqq
holds for some constant $C$ independent of $f$.
Similar to \eqref{key-1} we conclude
\beqq
&& F_\mu (k^1) 
\lesssim      \sum_{v\subset  [L]} \sum_{j\in \N_0^d(v)} 2^{|j|_1(r-\varepsilon_1)p}    
\sup_{|h_i|<2^{-k_i}, i\in e_1\atop |s_i|< 2^{-j_i}, i\in v}\|\Delta_{h,s}^{\bar{m},e_1\cup\, v}(\phi_{\mu}f )\, |L_p(\R)\big\|^p  \,.
\eeqq
Note that $v\cap e_1=\emptyset$. Again we have to decompose $\phi_{\mu}f$. But this time we only split $f$. This results in 
\[
\phi_{\mu}f = \phi_{\mu}\sum_{ \ell \in \Z}  \gf^{-1}  \varphi_{k^1+j+\ell } \gf f=\sum_{ \ell \in \Z}  \phi_{\mu}f_{k^1 +j+\ell} \, ,  
\]
where $j$ and $k^1$ are at our disposal. With $k^1 \in \N_0^d(e_1)$ and $j \in \N_0^d(v)$, as in \eqref{ell}, we can assume 
\be\label{ell-1}
[d]\backslash (e_1\cup  v)\subset \overline{\omega}(\ell)\qquad \text{and}\qquad \omega(\ell)\subset (e_1\cup v).
\ee 
Let $c>0$ be chosen such that 
\beqq
\Delta_{h,s}^{\bar{m},e_1\cup\, v}(\phi \, \cdot \, f )(x) = 0 \qquad \mbox{if} \qquad |x|_\infty \ge c\, . 
\eeqq
We put  $Q_{\mu}: =\{ x \in \R: ~|x-\mu|_\infty \leq c \}$. 
Because of \eqref{ell-1}, Lemma \ref{ddim-2} yields
\beqq
 |\Delta_{h,s}^{\bar{m},e_1\cup\, v}(\phi_{\mu}f_{k^1+j+\ell} )(x) | \leq \bigg(\prod_{i\in \omega(\ell)}2^{ \ell_i m} 
 \prod_{\bar{\omega}(\ell)\cap (e_1\cup v)}2^{ \ell_i a} \bigg)P_{2^{k^1+j+\ell },a}f_{k^1+j+\ell }(x)\, , \qquad x\in Q_{\mu}\, ,
\eeqq
for all $h$, $|h_i|< 2^{-k_i}$, $i \in e_1$ and for all $s$, $|s_i|< 2^{-j_i}$, $i \in v$. For those pairs $(h,s)$, applying the triangle 
inequality with respect  to $L_p (\R)$, it follows  
\beqq
\|\Delta_{h,s}^{\bar{m},e_1\cup\, v}(\phi_{\mu}f ) |L_p(\re^d)\big\|^p & \lesssim&   \bigg\|\sum_{\ell\in \Z} \bigg(\prod_{i\in \omega(\ell)}2^{ \ell_i m}     
\prod_{\bar{\omega}(\ell)\cap (e_1\cup v)}2^{ \ell_i a} \bigg)P_{2^{k^1+j+\ell },a}f_{k^1+j+\ell }\, \bigg|L_p(Q_{\mu})\bigg\|^p \\
&\lesssim& \bigg[\sum_{\ell\in \Z} 
\bigg(\prod_{i\in \omega(\ell)}2^{ \ell_i m}     \prod_{\bar{\omega}(\ell)\cap (e_1\cup v)}2^{ \ell_i a} \bigg)
\big\|P_{2^{k^1+j+\ell },a}f_{k^1+j+\ell }\, \big| L_p(Q_{\mu})\big\|\bigg]^p\,.
\eeqq
Consequently we find
\beqq
 F_\mu (k^1) 
 \lesssim      \sum_{v\subset  [L]} \sum_{j\in \N_0^d(v)} 2^{|j|_1 (r-\varepsilon_1)p}\bigg[\sum_{\ell\in \Z} 
\bigg(\prod_{i\in \omega(\ell)}2^{ \ell_i m}     \prod_{\bar{\omega}(\ell)\cap (e_1\cup v)}2^{ \ell_i a} \bigg)
\big\|P_{2^{k^1+j+\ell },a}f_{k^1+j+\ell }\, \big| L_p(Q_{\mu})\big\|\bigg]^p   \,.
\eeqq
The final overlap property of the $Q_\mu$ leads to
\beqq
&& \hspace*{-0.7cm}\bigg\{\sum_{\mu\in \Z}   F_\mu (k^1) \bigg\}^{1/p}
\lesssim   \sum_{v\subset [L]} \sum_{\ell\in \Z}\Bigg\{    \sum_{j\in \N_0^d(v)}\bigg( 2^{|j|_1 (r-\varepsilon_1)}\prod_{i\in \omega(\ell)}2^{ \ell_i m}     
\prod_{\bar{\omega}(\ell)\cap (e_1\cup v)}2^{ \ell_i a} \bigg)^p 
\\
&& \hspace*{5cm} \times \quad
\sum_{\mu\in \Z}
\big \| P_{2^{k^1+j+\ell },a}f_{k^1+j+\ell }\, \big |L_p(Q_{\mu})\big\|^p\Bigg\}^{1/p}
\\ 
&\lesssim &  \sum_{v\subset [L]} \sum_{\ell\in \Z}\Bigg\{    \sum_{j\in \N_0^d(v)} \bigg(2^{|j|_1 (r-\varepsilon_1) }\prod_{i\in \omega(\ell)}2^{ \ell_i m}     
\prod_{\bar{\omega}(\ell)\cap (e_1\cup v)}2^{ \ell_i a} \bigg)^p \big\| P_{2^{k^1+j+\ell },a}f_{k^1+j+\ell }\, \big|L_p(\R)\big\|^p\Bigg\}^{1/p}
\\  
&\lesssim &   \sum_{v\subset [L]} \sum_{\ell\in \Z}\Bigg\{    \sum_{j\in \N_0^d(v)} \bigg(2^{|j|_1 (r-\varepsilon_1) }\prod_{i\in \omega(\ell)}2^{ \ell_i m}     
\prod_{\bar{\omega}(\ell)\cap (e_1\cup v)}2^{ \ell_i a} \bigg)^p \big\|  f_{k^1+j+\ell }\, \big|L_p(\R)\big\|^p\Bigg\}^{1/p},
\eeqq
where in the last step we employed Theorem \ref{peetremax}. The triangle inequality in $\ell_q$ yields
\beqq
S_{u,e}^*
&\lesssim &   \Bigg\{\sum_{k^1\in \N_0^d(e_1)}\Bigg[ \sum_{v\subset [L]} \sum_{\ell\in \Z}\Bigg(    \sum_{j\in \N_0^d(v)}
\bigg(2^{|k^1|_1r}2^{|j|_1 (r-\varepsilon_1) }\prod_{i\in \omega(\ell)}2^{ \ell_i m}     \prod_{\bar{\omega}(\ell)\cap (e_1\cup v)}2^{ \ell_i a} \bigg)^p
\\
&& \hspace*{5cm} \times\quad \big\|  f_{k^1+j+\ell }\, \big|L_p(\R)\big\|^p\Bigg)^{1/p}      \Bigg]^q\Bigg\}^{1/q}  
\\
& \lesssim &
   \sum_{v\subset [L]}\sum_{\ell \in \Z}\Bigg\{\sum_{k^1\in \N_0^d(e_1)}  \Bigg[    \sum_{j\in \N_0^d(v)}  \bigg(2^{|k^1|_1r}2^{|j|_1 (r-\varepsilon_1) }
   \prod_{i\in \omega(\ell)}2^{ \ell_i m}     \prod_{\bar{\omega}(\ell)\cap (e_1\cup v)}2^{ \ell_i a} \bigg)^p
   \\
&& \hspace*{5cm} \times\quad \big\| f_{k^1+j+\ell }\, \big|L_p(\R)\big\|^p\Bigg]^{q/p}       \Bigg\}^{1/q}  .
\eeqq
Next we apply the inequality \eqref{key-2} with $\varepsilon_2>0$  and $t=q/p$ to yield
\beqq
S_{u,e}^*
&\lesssim &
   \sum_{v\subset [L]}\sum_{\ell \in \Z}\Bigg\{\sum_{k^1\in \N_0^d(e_1)}      \sum_{j\in \N_0^d(v)}  
   \bigg(2^{|k^1|_1r}2^{|j|_1 (r-\varepsilon_1+\varepsilon_2) }\prod_{i\in \omega(\ell)}2^{ \ell_i m}     \prod_{\bar{\omega}(\ell)\cap (e_1\cup v)}2^{ \ell_i a} \bigg)^q
   \\
& &  \hspace*{5cm} \times\quad \big\| f_{k^1+j+\ell }\, \big|L_p(\R)\big\|^q       \Bigg\}^{1/q} . 
\eeqq
Since $\varepsilon_2>0$ is arbitrary we can choose  $\varepsilon_2<\varepsilon_1<r-1/p$ to get
\beqq
S_{u,e}^*
\lesssim \sum_{v\subset [L]}\sum_{\ell \in \Z}\bigg(    \prod_{i\in \omega(\ell)}2^{ \ell_i m}     \prod_{\bar{\omega}(\ell)\cap (e_1\cup v)}2^{ \ell_i a}\prod_{i=1}^d 2^{-\ell_ir}\bigg)\cdot \| f|S^r_{p,q}B(\R)\|.
\eeqq
Let $\delta_2:=\min(m-r,r-a,r)>0$. Then, as in \eqref{key-0} (see \eqref{ell-1}), we conclude
\beqq
  \prod_{i\in \omega(\ell)}2^{ \ell_i m}     \prod_{\bar{\omega}(\ell)\cap (e_1\cup v)}2^{ \ell_i a}\prod_{i=1}^d 2^{-\ell_ir}
  &= & \prod_{i\in \omega(\ell)}2^{\ell_i(m-r)}\prod_{\bar{\omega}(\ell)\cap (e_1\cup v)}2^{ \ell_i (a-r)}\prod_{i\in [d]\backslash(e_1\cup v)}2^{-r\ell_i}
  \\
 & \leq& \prod_{i=1}^d2^{-|\ell_i|\delta_2}\, , 
\eeqq
which finally  implies $S_{u,e}^* \lesssim  \|\,  f\, |S^r_{p,q}B(\R)\|$ and hence
\beqq
S_{e,u}      \lesssim  \|\, f\, |S^r_{p,q}B(\R)\| \, \| \,  g\, |S^r_{p,q}B(\R)_\unif\|.
\eeqq
The proof is complete.
\end{proof} 


\noindent
{\bf Proof of Theorem \ref{mul-spaceb}}. Theorem \ref{mul-spaceb} is the direct consequence of 
Proposition \ref{q=inf} and Lemma \ref{uniform}.


\noindent
{\bf Proof of Theorem \ref{negative}.} We may employ the same counterexamples as in case $p=q$ which is treated in 
\cite{KS16}.

\noindent
{\bf Proof of Corollary \ref{mul-spacec}}.
The characterization of the multiplier space in \eqref{ws-127} is an immediate consequence 
of Theorem \ref{mul-spaceb} and the duality argument as used in proof of Lemma \ref{glatt}.
We omit details.
\qed


\subsection{Proof of the assertions in the local case}


\noindent
{\bf Proof of Theorem \ref{main-be-1}}.
The {\em if}-part is obvious. To prove the {\em only if}-part we 
apply the arguments from the proof of Theorem \ref{main-be}
and conclude that 
there exists a constant $C>0$ such that 
\[
\| f\cdot g\,|\, B^r_{p,q} (\re)\| \leq C\|\,f\,|B^r_{p,q} (\re)\| \,  \|\, g\,|B^r_{p,q} (\re)\|
\]
holds for all $f,g\in B^r_{p,q} (\re) \cap C^\infty (\re)$ satisfying $\supp f,g \subset (0,1)^d$.
As in the proof of Theorem 2.6.2/1 in Triebel \cite{Tr78}  we conclude that 
$\mathring{B}^r_{p,q} ([0,1])$ must be embedded into $C([0,1])$. 
Again this is known to be equivalent to the given restrictions, see \cite{SiTr}.
\qed

\noindent
{\bf Proof of Theorem \ref{mul-spacew}}. 
Sufficiency follows from Theorem \ref{main-be}. Necessity is implied by the fact that the function 
$g = 1$ on $[0,1]^d$ belongs to all spaces $S^r_{p,q}B([0,1]^d)$.
Hence, a function $f\in M(S^r_{p,q}B([0,1]^d))$ has to satisfy $f\, \cdot \, g \in S^r_{p,q}B([0,1]^d)$
for this $g$ and therefore $f\in S^r_{p,q}B([0,1]^d)$.
\qed

\noindent
{\bf Proof of Theorem \ref{negativec}}.
It is enough to observe that the used counterexamples in the proof of 
Theorem \ref{negative} have compact support.
\qed


\end{document}